\documentclass[11pt,reqno]{amsart}
\usepackage{amscd,amsmath,amssymb,amsthm,amsfonts,epsfig,graphics}
\usepackage{geometry}
\usepackage{graphicx}
\usepackage{mathrsfs,amssymb}
\usepackage{bm}
\usepackage{url}
\usepackage{subfigure}
\usepackage{hyperref}
\usepackage{color}

\newtheorem{theorem}{Theorem}[section]
\newtheorem{conjecture}[theorem]{Conjecture}
\newtheorem{corollary}[theorem]{Corollary}
\newtheorem{definition}[theorem]{Definition}
\newtheorem{lemma}[theorem]{Lemma}
\newtheorem{proposition}[theorem]{Proposition}
\newtheorem{remark}[theorem]{Remark}
\numberwithin{equation}{section}

\newcommand{\ls}{\lesssim}
\newcommand{\gs}{\gtrsim}
\newcommand{\R}{\mathbb R}
\newcommand{\Z}{\mathbb Z}
\newcommand{\N}{\mathbb N}
\newcommand{\T}{\mathbb T}

\newcommand{\eps}{\varepsilon}
\newcommand{\la}{\lambda}

\newcommand{\dd}{\,d}

\newcommand{\E}{\mathcal{E}}
\DeclareMathOperator{\spanop}{span}

\DeclareMathOperator{\vol}{vol}
\DeclareMathOperator{\dist}{dist}
\allowdisplaybreaks[4]
\begin{document}
	\title[Restriction of toral eigenfunctions]
	{Restriction estimates for toral eigenfunctions and lattice points in  spherical regions}
	


	\author{ Cheng Zhang and Zhifei Zhu}
	
	\address{Mathematical Sciences Center\\
		Tsinghua University\\
		Beijing, BJ 100084, China}
	\email{czhang98@tsinghua.edu.cn; zhifeizhu@tsinghua.edu.cn}

	
	\keywords{eigenfunction; restriction estimate; lattice point}

	\dedicatory{}

	\begin{abstract}
		We establish new $L^2$ restriction estimates for toral eigenfunctions. These estimates are sharp in certain cases, and thus prove a conjecture of Huang-Zhang for smooth submanifolds of large codimension. In particular, they provide new progress toward a conjecture of Bourgain-Rudnick. The proof combines a slicing and packing method with the approximation of the discrete spherical multiplier by Magyar-Stein-Wainger and Magyar.
	\end{abstract}
	
	\maketitle
	
	\section{Introduction}
	
	Let \(M\) be a compact smooth Riemannian manifold without boundary, and let
	\(\Delta\) be its Laplace--Beltrami operator. Suppose
	\[
	-\Delta e_\lambda=\lambda^2 e_\lambda,\qquad \lambda>1,
	\]
	one may ask how large the restriction of \(e_\lambda\) can be on a fixed
	submanifold \(\Sigma\subset M\).  Let \(d=\dim M\), let \(k=\dim\Sigma\), and
	write \(m=d-k\) for the codimension.  Burq--G\'erard--Tzvetkov
	\cite{BGT2006} and Hu \cite{Hu2009} proved that
	\begin{equation}\label{BGT01}
		\|e_\lambda\|_{L^2(\Sigma)}
		\lesssim
		\Lambda(m,\lambda)\|e_\lambda\|_{L^2(M)},
	\end{equation}
	where
	\[
	\Lambda(m,\lambda)=
	\begin{cases}
		\lambda^{1/4}, & m=1,\\
		\lambda^{1/2}\sqrt{\log\lambda}, & m=2,\\
		\lambda^{(m-1)/2}, & m\ge3.
	\end{cases}
	\]
	These estimates are sharp on the standard sphere, apart from the logarithmic
	loss in codimension two.  In the case of codimension one, if \(\Sigma\) has positive or
	negative definite second fundamental form, the exponent \(1/4\) can be
	improved to \(1/6\).  
	
	Related developments include the microlocal background in the work of Greenleaf--Seeger \cite{gs1994} and Tataru \cite{tata}. Quasimode and
	curved-hypersurface variants were developed by Tacy \cite{tacy} and
	Hassell--Tacy \cite{HT12}. For curves and geodesics, see Reznikov
	\cite{Rez2004}, Bourgain \cite{bo2009}, Sogge--Zelditch \cite{sz2014},
	Chen \cite{chen}, Chen--Sogge \cite{chensogge}, Xi--Zhang \cite{xz},
	Blair \cite{mb}, Zhang \cite{zhang}, and Park \cite{park}.  In the flat
	torus setting, one may refer to Bourgain--Rudnick \cite{BR2011}, Hezari--Rivi\`ere \cite{hr},
	Huang--Zhang \cite{hzapde}, Wang--Zhang \cite{wzadv}, and
	Burq--Germain--Sorella--Zhu \cite{BGSZ}. Other variants include \cite{hez,hr,BP,hwz,yzhang}.
	
	Let $M=\mathbb{T}^d=\mathbb{R}^d/\mathbb{Z}^d$ be the standard flat torus. We write $\lambda$ for the frequency radius, so the Laplace eigenvalue is $(2\pi\lambda)^2$. In this setting, eigenfunctions are
	trigonometric polynomials whose frequencies lie on a lattice sphere:
	\[
	e_\lambda(x)=\sum_{\xi\in \mathcal E_\lambda}a_\xi e^{2\pi i\xi\cdot x},
	\qquad
	\mathcal E_\lambda=\mathbb Z^d\cap \lambda S^{d-1}.
	\]
	Thus, restriction estimates of eigenfunctions are closely connected with the distribution of the lattice points in $\la S^{d-1}$.
	This arithmetic structure allows one to improve general estimates
	\eqref{BGT01}.
	
	For curves
	\(\Sigma\subset\mathbb T^2\), Burq--G\'erard--Tzvetkov observed that
	\eqref{BGT01} can be improved to
	\begin{equation}\label{BGTtorus}
		\|e_\lambda\|_{L^2(\Sigma)}
		\le C_\eps\lambda^\eps
		\|e_\lambda\|_{L^2(\mathbb T^2)},
		\qquad \forall \eps>0,
	\end{equation}
	using the corresponding \(L^\infty\) bound for eigenfunctions. They asked whether the factor \(\lambda^\eps\) can be removed. Bourgain--Rudnick
	\cite{BR2011} proved such a uniform bound for curve segments with nonvanishing
	geodesic curvature, and Huang--Zhang \cite[Theorem 2]{hzapde} proved it for
	the closed geodesics case. For a geodesic segment on \(\mathbb T^2\), this problem is closely related to the open problem of whether every
	arc of length \(\lambda^{1/2}\) on the circle \(|x|=\lambda\) contains only
	\(O(1)\) lattice points.
	
	Bourgain--Rudnick proposed the following higher-dimensional conjecture for real-analytic hypersurfaces.
	
	\begin{conjecture}[Bourgain--Rudnick \cite{BR2011}]\label{BRconj}
		Let \(d\ge2\), and let \(\Sigma\subset\mathbb T^d\) be a real-analytic
		hypersurface.  Then for $\lambda>1, $
		\begin{equation}\label{BR01}
			\|e_\lambda\|_{L^2(\Sigma)}
			\lesssim
			\|e_\lambda\|_{L^2(\mathbb T^d)}.
		\end{equation}
		If moreover \(\Sigma\) has nowhere vanishing curvature and
		\(\lambda>\lambda_\Sigma\), then
		\begin{equation}\label{BR02}
			\|e_\lambda\|_{L^2(\Sigma)}
			\gtrsim
			\|e_\lambda\|_{L^2(\mathbb T^d)}.
		\end{equation}
	\end{conjecture}
	
	Bourgain--Rudnick proved the conjecture for \(d=2,3\) in the real-analytic
	case with nowhere vanishing curvature.  Hezari--Rivi\`ere \cite{hr} obtained
	the upper bound along a density-one subsequence of eigenfunctions for smooth
	embedded hypersurfaces with nonvanishing principal curvatures.
	
	Recently, Huang--Zhang \cite{hzapde} proposed the following analogue in the case of smooth manifolds in higher
	codimensions.
	
	\begin{conjecture}[Huang--Zhang \cite{hzapde}]\label{HZconj}
		Let \(d\ge3\), and let \(\Sigma\subset\mathbb T^d\) be a smooth
		submanifold of codimension \(m\ge2\).  Then, for \(\lambda>1 \) and \(\eps>0\),
		\begin{equation}\label{hzeq}
			\|e_\lambda\|_{L^2(\Sigma)}
			\lesssim_\eps
			\lambda^{\frac m2-1+\eps}
			\|e_\lambda\|_{L^2(\mathbb T^d)}.
		\end{equation}
	\end{conjecture}
	
	Huang--Zhang proved this
	bound \eqref{hzeq} as well as \eqref{BR01} for rational totally geodesic submanifolds, while the problem remains
	open for general submanifolds.  The exponent is also sharp in general; see
	Proposition \ref{prop1} below.
	
	Our first main result is Theorem \ref{thm1}, which proves Conjecture \ref{HZconj} for $m\ge \frac{d+3}2$ when $d\ge5$. It also improves the restriction estimates \eqref{BGT01} for totally geodesic submanifolds of all codimensions. In addition, we prove new restriction estimates for curves on $\mathbb{T}^3$; see Theorem \ref{T3thm}.

	\begin{theorem}\label{thm1}
		Let \(d\ge3\).
		
		\emph{(i)} Let \(\Sigma\subset\mathbb T^d\) be a compact totally geodesic
		submanifold of codimension \(m\).  Then for every \(\eps>0\),
		\[
		\|e_\lambda\|_{L^2(\Sigma)}
		\lesssim_\eps
		\lambda^{\alpha(m,d)+\eps}
		\|e_\lambda\|_{L^2(\mathbb T^d)},
		\]
		where
		\[
		\alpha(1,d)=
		\begin{cases}
			\frac1{12},& d=3,\\
			\frac18,& d\ge4,
		\end{cases}
		\qquad
		\alpha(2,d)=
		\begin{cases}
			\frac13,& d=3,\\
			\frac38,& d\ge4,
		\end{cases}
		\]
		\[
		\alpha(3,d)=
		\begin{cases}
			\frac34,& d=4,\\
			\frac45,& d=5,\\
			\frac{13}{16},& d\ge6,
		\end{cases}
		\]
		and, for \(m\ge4\) and \(d\ge5\),
		\[
		\alpha(m,d)=
		\begin{cases}
			\frac m2-1,& d\le 2m-3,\\
			\frac m2-\frac34,& d=2m-2,\\
			\frac m2-\frac34+2^{-m-1},& d\ge 2m-1.
		\end{cases}
		\]
		
		\emph{(ii)} If \(d\ge5\), \(m\ge \frac{d+3}2\), and
		\(\Sigma\subset\mathbb T^d\) is any compact smooth submanifold of
		codimension \(m\), then for every \(\eps>0\)
		\[
		\|e_\lambda\|_{L^2(\Sigma)}
		\lesssim_\eps
		\lambda^{\frac m2-1+\eps}
		\|e_\lambda\|_{L^2(\mathbb T^d)}.
		\]
	\end{theorem}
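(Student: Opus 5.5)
We reduce both parts to lattice-point counting in thin spherical regions, then carry out the count by slicing and packing. By the standard $TT^*$ argument, $\|e_\lambda\|_{L^2(\Sigma)}^2\le A\|e_\lambda\|_{L^2(\mathbb T^d)}^2$, where $A$ is the norm of the kernel $K(\xi,\eta)=\widehat{\chi\sigma_\Sigma}(\xi-\eta)$ restricted to $\mathcal E_\lambda\times\mathcal E_\lambda$. Schur's test then bounds $A$ by $\sup_\xi\sum_{\eta\in\mathcal E_\lambda}|\widehat{\chi\sigma_\Sigma}(\xi-\eta)|$.

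For part (i), $\Sigma$ is a piece of an affine $k$-plane $V$. Then $\widehat{\chi\sigma_\Sigma}$ decays rapidly in the directions of $V$ and is constant in the normal directions. The sum therefore counts lattice points of $\mathcal E_\lambda$ lying in slabs $\{\eta:|P_V(\eta-\xi)|\lesssim R\}$, with rapid decay in $R$. For fixed $\xi$ and $R\approx1$, this is the number of points of $\mathcal E_\lambda$ in a tube of radius $O(1)$ around the affine $m$-plane $\xi+V^\perp$. In the rational case, $V^\perp$ slices $\lambda S^{d-1}$ in a lower-dimensional sphere. Following Huang--Zhang, we then count
\[
\#\{\zeta\in\mathbb Z^m\cap(\text{$O(1)$-neighborhood of a sphere of radius }\le\lambda)\}
\]
after fibering over the $O(1)$ translates. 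In general we count the lattice points of the sphere lying in a $1$-neighborhood of an $m$-plane, i.e.\ in a spherical cap region of dimensions $\lambda^{?}$.

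The key step is a bound
\[
N(m,d,\lambda):=\sup_{\text{$m$-planes }W}\#\bigl(\mathcal E_\lambda\cap W_{O(1)}\bigr)\lesssim_\varepsilon\lambda^{2\alpha(m,d)+\varepsilon}.
\]
We obtain it in three steps.
\begin{enumerate}
\item[(a)] Slice $W_{O(1)}\cap\lambda S^{d-1}$ into pieces on which it is essentially an annulus of width $\lambda^{-1}\rho$ in an $m$-dimensional spherical section.
\item[(b)] Pack these pieces into caps of suitable size $r$.
\item[(c)] Count lattice points in each cap of $\lambda S^{d-1}$ of radius $r$ and thickness $r^2/\lambda$.
\end{enumerate}
For (c) we use the Magyar--Stein--Wainger/Magyar approximation of the discrete spherical measure: $\mathbf 1_{\mathcal E_\lambda}*\phi_r$ equals a main term $\sum_{q}\sum_a e(-a\lambda^2/q)\,S(a/q)\,\widehat{d\sigma}$-type pieces, plus an error that is $O(\lambda^{d-2-\delta})$ in $L^\infty$ for $d\ge5$. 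For $d=3,4$ we substitute the divisor-type bounds $\#\mathcal E_\lambda\lesssim\lambda^{d-2+\varepsilon}$ together with Jarník/Bourgain--Rudnick cap counts; this explains why the exponents in those dimensions are different. The main term gives the expected count $r^{d-1}(r^2/\lambda)\lambda^{d-3}\cdot\lambda^{-(d-1)}\lambda^{d-1}$, and the error is controlled after optimizing $r$. Balancing the main and error terms over the slices produces the piecewise exponents. In particular, the term $2^{-m-1}$ should come from iterating a van der Corput/Weyl differencing $m$ times on the error term when $d\ge2m-1$. For $d\le2m-3$ the main term dominates and we get $\lambda^{m-2}$, which is sharp; it agrees with Proposition \ref{prop1}.

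For part (ii), $\Sigma$ is a general smooth submanifold. We localize to a patch of size $\lambda^{-1/2}$ (or a similar scale), on which $\Sigma$ agrees with its tangent plane up to $O(\lambda^{-1})$, so the tangent-plane analysis applies. Covering $\Sigma$ by about $\lambda^{k/2}$ such patches and using the fact that each patch's Fourier transform decays outside a box of dimensions $\lambda^{1/2}$ (tangential) by $\lambda$, we again reduce to counting $\mathcal E_\lambda$ in neighborhoods of $m$-planes. These neighborhoods now have radius $\lambda^{1/2}$ instead of $1$, and we need the bound $\lambda^{m-2+\varepsilon}$ times the patch volume normalization. The condition $m\ge(d+3)/2$ guarantees that, in the cap count, the Magyar error term is still dominated by the main term at this coarser scale, which yields the exponent $\frac m2-1$.

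The main obstacle will be (c), the uniform cap/slab counting with sharp exponents, especially the error term of the discrete spherical multiplier at intermediate $q$. We will first try to use Magyar's $\ell^\infty$ bound for the Kloosterman-refined error directly, and fall back to differencing to reach the $2^{-m-1}$ loss.
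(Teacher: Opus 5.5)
\textbf{Part (i): the counting mechanism cannot give the stated exponents.} Your reduction of (i) to counting $\mathcal E_\la$ in the unit neighbourhood of an $m$-plane is fine; this is $A_{k,d,\la}$ in Theorem~\ref{prop3a}. The gap is in steps (a)--(c).

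The Magyar--Stein--Wainger remainder is controlled only by the sup bound $|E_n|\ls_\eps\la^{\frac{d-1}2+\eps}$, so each application of Proposition~\ref{propm} to a counting region costs $\la^{\frac{d-1}2}$. Applying it cap by cap multiplies this by the number of caps. Even a single application to the whole slab already exceeds the target $\la^{2\alpha(m,d)}$ whenever $d\ge\max(4,2m-1)$; for example, $\la^{3/2}$ against $\la^{1/4}$ when $m=1$, $d=4$. Since $E_n$ is known only through this sup bound, there is nothing on which to ``iterate Weyl differencing''. For $d=3,4$, the isotropic cap counts you invoke do not exploit the thinness of the regions: for $m=1$, $d=3$ one must count points in a $1\times\la^{1/2}$ strip, and the target is $\la^{1/6+\eps}$.

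What is missing is purely lattice-geometric. Lemma~\ref{lem:shell} (Lemma~\ref{lem:end-tube} if $m=1$) covers the region by $O(\la^{\frac{m-1}2})$ bricks. Each brick has extent $\approx\la^{1/2}$ in $m-1$ directions and $O(1)$ in the others, or there is a single brick with $m$ long directions. A brick with $q$ long directions is then counted in one of two ways.
\begin{enumerate}
\item[(1)] Proposition~\ref{prop:fibration}: take successive minima for the anisotropic norm $C|\pi_Va|+L|\pi_Ua|$ and fibre the brick over the values of the first $d-2$ integer forms $a_j\cdot x$. Each fibre lies in an affine $2$-plane, so it has $O(\la^\eps)$ points on the sphere (Lemma~\ref{lem:two-parameter-sphere}). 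This gives $L^{q(d-2)/d}\la^\eps$.
\item[(2)] Proposition~\ref{prop:local}: write $N^2=\sum_hR(h)$ and use that $x,x+h\in\la S^{d-1}$ forces $h\cdot x=-|h|^2/2$. Then $P_h$ has one fewer long direction, so $T_{d,q}\ls1+L^{q/2}T_{d,q-1}^{1/2}$, which gives $L^{q-1+2^{-q}}$.
\end{enumerate}
This differencing of the \emph{point set}, not of an exponential sum, is where $2^{-m}$ (hence $2^{-m-1}$) comes from. MSW is needed only for $m\ge4$, $d\le2m-3$. There it is applied once to the whole slab, via Theorem~\ref{ntthm} with constants uniform in the direction, not cap by cap.

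\textbf{Part (ii): the patch decomposition loses a power of $\la$.} Take a patch of diameter $\approx\la^{-1/2}$ with tangent plane $T$. Its frequency-side Schur bound is $\la^{-k/2}$ times the number of $\eta\in\mathcal E_\la$ with $|\pi_T(\eta-\xi)|\ls\la^{1/2}$. That number has expected size $\la^{m-2+k/2}$, not $\la^{m-2}$. For odd $\la$, the pigeonhole argument of Proposition~\ref{prop1} shows some such slab really contains $\gs\la^{m-2+k/2}$ points, so each patch has squared operator norm $\gs\la^{m-2}$. Adding the $\approx\la^{k/2}$ patches therefore gives $\la^{m-2+k/2}$ instead of $\la^{m-2}$. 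Because the loss comes from the triangle inequality across patches, no sharper slab count can repair it.

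The paper avoids tangent-plane approximation altogether:
\begin{enumerate}
\item[(1)] It applies Schur's test in physical space to a whole unit patch, $\sup_u\int_U|S_n(\Gamma(u)-\Gamma(v))|\,dv$, and inserts Proposition~\ref{propm}.
\item[(2)] It uses only that $O(q^k)$ lattice points lie within $O(1)$ of the $k$-dimensional set $q(\Gamma(u)-\Gamma(U))$. Together with the decay of $\widehat\sigma$, this gives $I_q\ls q^k\la^{-k}$ for $k<s=\frac{d-1}2$ (Lemma~\ref{lem:packing}).
\item[(3)] The main term is then $\la^{d-2-k}\sum_{q\le\la}q^{k-s+\eps}\rho(q,n)^{1/2}\ls\la^{m-2+\eps}$ exactly when $k\le s-1$ (Lemma~\ref{lem:rho-absorption}).
\item[(4)] The remainder $\la^{s+\eps}$ is at most $\la^{m-2+\eps}$ under the same condition.
\end{enumerate}
Both conditions are equivalent to $m\ge\frac{d+3}2$.
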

	
	Part (ii) proves Conjecture \ref{HZconj} in the range
	\(m\ge(d+3)/2\), \(d\ge5\).  Part (i) gives new bounds for totally geodesic
	submanifolds in all codimensions.  When \(d=3\), they agree with earlier bounds of Huang--Zhang
	\cite[Theorem 4]{hzapde}. They also improve the general
	Burq--G\'erard--Tzvetkov estimate \eqref{BGT01}.

	The proof of Theorem \ref{thm1} combines a slicing and packing method with the approximation of the discrete spherical multiplier (Proposition \ref{propm}) due to Magyar-Stein-Wainger \cite{msw} and Magyar \cite{m}. At present, our approach only partially confirms Conjecture \ref{HZconj}. For smaller codimension, the remainder term \(\lambda^{(d-1)/2+\eps}\) in Proposition \ref{propm} prevents
	this method from reaching the conjectural exponent. A full resolution of this conjecture would require a more efficient use of cancellation in the relevant exponential sums. 
	
	We also prove, in Theorem \ref{T3thm}, new restriction estimates for curves in \(\mathbb T^3\).  If
	\(\Sigma\subset\mathbb T^3\) is a geodesic segment, a smooth curve segment
	with nowhere vanishing torsion, or a real-analytic curve segment whose geodesic curvature is nowhere vanishing, then
	\begin{equation}\label{eqt3}
		\|e_\lambda\|_{L^2(\Sigma)}
		\lesssim_\eps
		\lambda^{1/3+\eps}
		\|e_\lambda\|_{L^2(\mathbb T^3)}.
	\end{equation}
	If \(\Sigma\) is a smooth planar curve segment with nowhere vanishing geodesic
	curvature, then we can obtain the stronger bound $O_\eps(\la^{1/4+\eps})$. 
	These estimates improve the general codimension-two bound in \eqref{BGT01}.
	Our proof uses Bourgain--Rudnick's lattice-point estimates in spherical caps
	together with the van der Corput lemma. Notably, the bound \eqref{eqt3} agrees with the one predicted by the endpoint Discrete Restriction Conjecture (Bourgain--Demeter
	\cite[Conjecture~2.6]{BD}). See Remark~\ref{dcrem}.
	
	\subsection{Paper structure}
	
	Section \ref{sec2} introduces the lattice-point counting problem associated with
	totally geodesic restrictions and proves the reduction from restriction
	estimates to band-counting estimates.  It also records lower bounds showing
	that the expected exponents are sharp.  Section \ref{sec3} proves the geometric
	lattice-point estimates by slicing and packing.  Section \ref{sec4} uses the
	Magyar--Stein--Wainger and Magyar spherical multiplier approximation to prove
	restriction estimates for general smooth submanifolds.  Section \ref{sec5} combines the
	geometric and multiplier estimates to prove Theorem \ref{thm1}.  Section \ref{sec6}
	proves the curve estimates on \(\mathbb T^3\) and discusses their relation to
	the Discrete Restriction Conjecture.
	
	\subsection{Notation}
	
	Throughout the paper, \(X\lesssim Y\) means that \(X\le CY\), where the
	constant \(C\) is independent of \(\lambda\). The implicit constant \(C\) may depend on the dimensional constant \(d\), the fixed band-width
	constant \(C_0\), and the transversality constant \(\nu\) whenever they appear. 
	
	The subscripts indicate the allowed
	dependence of the implicit constant. Thus, \(X\lesssim_\eps Y\) means that the
	constant may depend on \(\eps\).  We write \(X\approx Y\) if
	\(X\lesssim Y\) and \(Y\lesssim X\).  Finally, \(X\gg Y\) means that
	\(X\ge CY\) for a sufficiently large constant \(C>0\).
	
	\subsection{Acknowledgments}
	
	The authors are supported in part by the National Key R\&D Program of China
	2024YFA1015300.  C.Z. is also supported in part by NSFC Grant 12371097.
	Z.Z. is also supported in part by NSFC Grant 12501065. The authors would like to thank Xiaocheng Li for his helpful comments on the manuscript.
	
	\section{Preliminaries}\label{sec2}
	Let $\lambda S^{d-1}$ be the standard sphere in the Euclidean space $\R^d$ of radius $\lambda$. If \(\la^2\notin\N\), then
	\(\Z^d\cap\la S^{d-1}=\varnothing\), so all estimates are trivial.  We
	therefore assume \(\la^2\in\N\).
	
	A unit band on \(\la S^{d-1}\) is a set of the form
	\[
	B(u,x_0)=\left\{x\in\la S^{d-1}:
	\left|\frac{u}{|u|}\cdot(x-x_0)\right|\le C_0\right\}.
	\]
	For \(1\le k\le d-1\), the bands are \(\nu\)-transverse if their unit
	normals \(n_j=u_j/|u_j|\) satisfy
	\[
	|n_1\wedge\cdots\wedge n_k|\ge\nu.
	\]
	Define
	\[
	A_{k,d,\la}:=\sup\#\left(\Z^d\cap\la S^{d-1}\cap
	\bigcap_{j=1}^k B(u_j,x_{0,j})\right),
	\]
	where the supremum is over all \(\nu\)-transverse \(k\)-tuples of bands. Indeed, $A_{k,d,\la}$ is essentially the maximum number of lattice points in the intersection of the sphere $\la S^{d-1}$ and the 1-neighborhood of a $(d-k)$-plane in $\mathbb{R}^d$. See Lemma \ref{lem:transverse-slab}.

	\subsection{Lower bounds}
	For $n\in\N$, write
	\[
	r_d(n)=\#\{x\in\Z^d: |x|^2=n\}.
	\]
	\begin{lemma}\label{lem:rd-odd-lower}
		Let $d\ge4$ and let $\la \ge3$ be odd.  Then $r_d(\la^2)\gs \la^{d-2}.$ 
	\end{lemma}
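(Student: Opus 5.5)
The plan is to reduce to $d=4$ via Jacobi's four-square theorem, and then pass to higher $d$ by induction, adding one coordinate at a time.

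\emph{Base case $d=4$.} Jacobi's theorem gives
\[
r_4(n)=8\sum_{4\nmid q\mid n}q .
\]
Take $n=\la^2$ with $\la$ odd. Then every divisor of $n$ is odd, so none is divisible by $4$. In particular $q=\la^2$ itself is admissible, and we get
\[
r_4(\la^2)\ge 8\la^2\gs \la^{2}=\la^{d-2}.
\]

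\emph{Inductive step $d\ge5$.} Assume the bound holds in dimension $d-1$. Representations of $\la^2$ as a sum of $d$ squares can be built by fixing the last coordinate $j$ and representing the remainder $\la^2-j^2$ as a sum of $d-1$ squares. This gives
\[
r_d(\la^2)=\sum_{|j|\le\la} r_{d-1}(\la^2-j^2).
\]
The difficulty is that $\la^2-j^2$ is not itself an odd square, so the inductive hypothesis does not apply to these terms directly. I will therefore strengthen the induction hypothesis to a bound for general $n$.

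\emph{Strengthened hypothesis.} For $d\ge4$ and every $n$ with $n\not\equiv 0\pmod 4$, I aim to show $r_d(n)\gs n^{(d-2)/2}$. Since $\la$ is odd, $\la^2\equiv1\pmod 4$, so the case $n=\la^2$ is covered.

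For $d=4$ this is immediate: if $4\nmid n$ then $q=n$ is an admissible divisor in Jacobi's formula, so $r_4(n)\ge 8n$.

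For the step from $d-1$ to $d\ge5$, I restrict the sum to those $j$ with $|j|\le \sqrt n/2$ and $n-j^2\not\equiv0\pmod 4$. For such $j$ we have $n-j^2\ge 3n/4$. The residue of $j^2$ modulo $4$ is $0$ or $1$ according to the parity of $j$. If $n\equiv 1\pmod 4$, I keep even $j$, so that $n-j^2\equiv 1$. If $n\equiv 2,3\pmod 4$, either parity works. In every case at least about $\sqrt n/2$ values of $j$ qualify. Summing the inductive bound over these $j$ gives
\[
r_d(n)\gs \sqrt n\cdot n^{(d-3)/2}=n^{(d-2)/2}.
\]
Setting $n=\la^2$ then yields $r_d(\la^2)\gs\la^{d-2}$.

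The only delicate point is choosing the strengthened hypothesis so that the induction closes. Requiring $n\not\equiv0\pmod 4$ preserves Jacobi's lower bound $r_4(n)\ge 8n$, which is what makes both the base case and the step work.
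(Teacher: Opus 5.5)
Your proof is correct and follows the route the paper indicates: the paper gives no argument beyond remarking that the lemma is known and follows from Jacobi's four-square formula. Your induction under the hypothesis $n\not\equiv 0 \pmod{4}$ fills in the passage to $d\ge5$ that the paper leaves implicit; for $n=\la^2$ it amounts to fixing $d-4$ even coordinates of size at most $\la/2$, so that the remaining four-square count stays $\gs\la^2$.
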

	This lemma is known and can be proved using Jacobi's four-square formula. For $d=4$ and even $\la$, the lower bound does not need to hold. See Walfisz \cite{w} and Bourgain-Rudnick \cite[Section 2.1]{BR2011}.
	\begin{lemma}\label{lem:slice-pigeonhole}
		Let $d\ge4$, $1\le k\le d-2$, and let $\la\ge3$ be odd.  Then there exists $a_0\in\Z^k$ such that
		\[
		\#\{b\in\Z^{d-k}: |a_0|^2+|b|^2=\la^2\}
		\gs \la^{d-k-2}.
		\]
	\end{lemma}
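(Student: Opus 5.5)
Plan: pigeonhole over the first $k$ coordinates. Every lattice point $x=(a,b)\in\Z^k\times\Z^{d-k}$ with $|x|^2=\la^2$ has $|a|\le\la$, so $a$ ranges over $\Z^k\cap[-\la,\la]^k$, a set of at most $(2\la+1)^k\lesssim\la^k$ elements. Splitting the points of $\mathcal E_\la$ by their first $k$ coordinates gives
\[
r_d(\la^2)=\sum_{a\in\Z^k,\,|a|\le\la}\#\{b\in\Z^{d-k}: |a|^2+|b|^2=\la^2\}.
\]
Since $d\ge4$ and $\la\ge3$ is odd, Lemma~\ref{lem:rd-odd-lower} gives $r_d(\la^2)\gtrsim\la^{d-2}$.

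By pigeonhole, the largest summand is at least the average. So there is some $a_0$ with $|a_0|\le\la$ and
\[
\#\{b\in\Z^{d-k}: |a_0|^2+|b|^2=\la^2\}\ \ge\ \frac{r_d(\la^2)}{(2\la+1)^k}\ \gtrsim\ \la^{d-2-k}.
\]
This is exactly the claimed bound $\la^{d-k-2}$. The restriction $k\le d-2$ only ensures the exponent is nonnegative, so the statement is nontrivial. It also guarantees that the slice $\Z^{d-k}$ has dimension at least $2$, which is the setting used later.

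There is no real obstacle: the only nontrivial input is Lemma~\ref{lem:rd-odd-lower}, which we may assume. One should just check that the implicit constant depends only on $d$ and $k$ (through $3^k$ and the constant in Lemma~\ref{lem:rd-odd-lower}), which is fine since $k\le d$.
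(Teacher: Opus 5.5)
Your proposal is correct and matches the paper's proof: you write $r_d(\la^2)=\sum_{|a|\le\la}\#\{b\in\Z^{d-k}: |a|^2+|b|^2=\la^2\}$, bound the number of admissible $a$ by $O(\la^k)$, invoke Lemma~\ref{lem:rd-odd-lower}, and pigeonhole. Your added remarks on $k\le d-2$ and on the dependence of the constant are accurate.
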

	
	\begin{proof}
		Write $x=(a,b)$ with $a\in\Z^k$ and $b\in\Z^{d-k}$.  For each $a\in\Z^k$, set
		\[
		F(a)=\#\{b\in\Z^{d-k}: |a|^2+|b|^2=\la^2\}.
		\]
		Then
		\[
		r_d(\la^2)=\sum_{\substack{a\in\Z^k\\ |a|\le \la}}F(a).
		\]
		The number of possible $a$ is $\#\{a\in\Z^k: |a|\le \la\}
		\ls \la^k.$
		By Lemma \ref{lem:rd-odd-lower}, $r_d(\la^2)\gs \la^{d-2}.$
		Therefore, the pigeonhole principle gives some $a_0\in\Z^k$ with $F(a_0)
		\gs
		\la^{d-k-2}.$
	\end{proof}
	
	\begin{proposition}\label{thm:limsup}
		Let $d\ge3$, let $1\le k\le d-2$, and fix $C_0>0$ and $0<\nu\le1$.  Then
		\[
		\limsup_{\lambda\to\infty}
		\frac{A_{k,d,\lambda}}{\lambda^{d-k-2}}
		\gs1.
		\]
		In fact, for $d\ge4$ the lower bound holds for all sufficiently large odd integers $\lambda$.
	\end{proposition}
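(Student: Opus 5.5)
For $d\ge4$ the plan is to use the slice from Lemma \ref{lem:slice-pigeonhole} directly. Let $\la\ge3$ be odd, and let $a_0\in\Z^k$ be the point given by that lemma. The set
\[
S=\{(a_0,b): b\in\Z^{d-k},\ |a_0|^2+|b|^2=\la^2\}
\]
lies in $\Z^d\cap\la S^{d-1}$ and has $\gs\la^{d-k-2}$ elements. We take the $k$ coordinate bands with $u_j=e_j$ and $x_{0,j}=(a_0,b_*)$ for a fixed point $(a_0,b_*)\in S$. The band $B(e_j,x_{0,j})$ consists of the points $x\in\la S^{d-1}$ with $|x_j-(a_0)_j|\le C_0$, so every point of $S$ lies in all $k$ bands. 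The normals are orthonormal, so $|e_1\wedge\cdots\wedge e_k|=1\ge\nu$ and the tuple is $\nu$-transverse. Hence $A_{k,d,\la}\ge\#S\gs\la^{d-k-2}$ for every large odd $\la$, which is the stronger claim. If the definition insists that $x_0\in\la S^{d-1}$, the choice $x_0\in S$ satisfies this.

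The case $d=3$ forces $k=1$, so the target is $\limsup A_{1,3,\la}/\la^{0}\gs1$. This only asks for a single lattice point in some band, for infinitely many $\la$ with $\la^2\in\N$. Take $\la$ to be any integer and $x_0=(\la,0,0)\in\Z^3\cap\la S^2$, with $u=e_1$. Then $x_0$ lies in $B(e_1,x_0)$, so $A_{1,3,\la}\ge1$ for all integers $\la$, and the limsup is at least $1$.

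There is no real obstacle here. The only points to check are that the lower bound of Lemma \ref{lem:rd-odd-lower} feeds through the pigeonhole step of Lemma \ref{lem:slice-pigeonhole} to the slice, and that the constants do not depend on $\la$. The dependence on $C_0$ and $\nu$ is harmless, since the band widths only need to be nonnegative and the normals are orthonormal.
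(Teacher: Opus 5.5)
Your proposal is correct and follows essentially the same route as the paper. For $d\ge4$ you use the slice $a_0$ from Lemma~\ref{lem:slice-pigeonhole} and the $k$ coordinate bands $|x_j-a_{0,j}|\le C_0$ with orthonormal normals, and for $d=3$ you use the point $(\lambda,0,0)$ in the band normal to $e_1$; this is exactly the paper's argument.
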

	
	\begin{proof}
		We first treat $d\ge4$.  Let $\la\ge3$ be odd.  By Lemma \ref{lem:slice-pigeonhole}, there exists $a_0=(a_{0,1},\dots,a_{0,k})\in\Z^k$ such that
		\[
		\#\{b\in\Z^{d-k}: |a_0|^2+|b|^2=\la^2\}
		\gs \la^{d-k-2}.
		\]
		For $j=1,\dots,k$, define the coordinate band
		\[
		B_j
		=
		\{x\in \la S^{d-1}: |x_j-a_{0,j}|\le C_0\}.
		\]
		This is a unit band with normal $e_j$.  The normals satisfy $|e_1\wedge\cdots\wedge e_k|=1,$
		so the bands are $\nu$-transverse for every $0<\nu\le1$.
		
		Every lattice point of the form $x=(a_0,b)$ with $|a_0|^2+|b|^2=\la^2$
		lies in $B_1\cap\cdots\cap B_k$. Hence
		\[
		A_{k,d,\la}
		\ge
		\#\{b\in\Z^{d-k}: |a_0|^2+|b|^2=\la^2\}
		\gs \la^{d-k-2}.
		\]
		It remains to treat $d=3$ and $k=1$.
		For every integer $\la\ge1$, the point $(\la,0,0)\in\Z^3\cap \la S^2.$
		The coordinate band $B=\bigl\{x\in \la S^2: |x_1-\la|\le C_0\bigr\}$
		contains this point.  Thus $A_{1,3,\la}\ge1.$
	\end{proof}
	
	Therefore, the following conjecture on the upper bounds seems reasonable.
	\begin{conjecture}\label{Aconj}
		For $d\ge2$ and $1\le k\le d-1$,
		\begin{equation}
			A_{k,d,\la}\ls1,\ \ \text{when}\ d-k=1,
		\end{equation}
		and
		\begin{equation}
			A_{k,d,\la}\ls_{\eps}\la^{d-k-2+\eps},\ \forall \eps>0,\ \ \text{when}\ d-k\ge2.
		\end{equation}
	\end{conjecture}
	
	\subsection{Sharp restriction estimates}
	Throughout, the totally geodesic submanifolds are assumed to be bounded with fixed unit diameter, and they are essentially determined by their directions. Huang--Zhang \cite[Theorem 3]{hzapde} reduced the restriction problem to the number $A_{k,d,\la}$, with a logarithmic loss. We can now remove this loss. The idea is to majorize the cutoff function by a function with compact Fourier support.
	
	\begin{theorem}\label{prop3a}\label{prop:logfree-reduction}
		Let $d\ge3$, and let $\Sigma\subset \mathbb{T}^d$ be a totally geodesic submanifold of dimension $k$ and fixed unit diameter. Then
		\begin{equation}\label{Tdk}
			\|e_\lambda\|_{L^2(\Sigma)}\ls
			\sqrt{A_{k,d,\lambda}}
			\|e_\lambda\|_{L^2({\mathbb{T}^d})}.
		\end{equation}
		Here the constant is independent of $\lambda$ and the direction of $\Sigma$.
		Moreover, for any fixed eigenvalue $\lambda$, there exist an eigenfunction $e_\lambda$ and a totally geodesic submanifold $\Sigma$ of dimension $k$ and fixed unit diameter  such that
		\begin{equation}\label{Tdksharp}
			\|e_\lambda\|_{L^2(\Sigma)}\gs
			\sqrt{A_{k,d,\lambda}}\|e_\lambda\|_{L^2({\mathbb{T}^d})}.
		\end{equation}
		Thus, the upper bound \eqref{Tdk} is sharp.
	\end{theorem}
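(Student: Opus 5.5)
Parametrize $\Sigma$ as $\{x_0+\sum_{i=1}^k t_i v_i : t\in[0,1]^k\}$ with $v_1,\dots,v_k$ an orthonormal frame of the tangent $k$-plane $V$, and bound $\|e_\la\|_{L^2(\Sigma)}^2\le\int_{\R^k}\phi(t)\,|e_\la(x_0+\textstyle\sum t_iv_i)|^2\,dt$. The weight $\phi\ge \mathbf 1_{[0,1]^k}$ is a fixed nonnegative Schwartz function with $\widehat\phi$ supported in a ball of radius $c$. Such a $\phi$ exists: take $\phi=|\check\psi|^2$ with $\psi$ supported in $B(0,c/2)$, rescale so that $\phi\ge 1$ near the origin, and sum finitely many translates to cover the unit cube. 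This majorization is what removes the logarithmic loss in Huang--Zhang. Expanding $e_\la=\sum_\xi a_\xi e^{2\pi i\xi\cdot x}$ gives
\[
\int\phi(t)|e_\la|^2dt=\sum_{\xi,\eta\in\E_\la}a_\xi\overline{a_\eta}e^{2\pi i(\xi-\eta)\cdot x_0}\,\widehat\phi\big(-P(\xi-\eta)\big),
\]
where $P\xi=(\xi\cdot v_i)_{i=1}^k$. The kernel $K(\xi,\eta)=\widehat\phi(-P(\xi-\eta))$ vanishes unless $|P\xi-P\eta|\le c$.

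Next apply Schur's test. Since $|\widehat\phi|\ls 1$, the quadratic form is bounded by $\sup_\xi\#\{\eta\in\E_\la:|P(\xi-\eta)|\le c\}\cdot\sum|a_\xi|^2$, and $\sum|a_\xi|^2=\|e_\la\|^2_{L^2(\T^d)}$. The set counted here is $\E_\la\cap\bigcap_{i=1}^k\{x:|v_i\cdot(x-\xi)|\le c\}$, an intersection of $k$ unit bands with orthonormal normals $v_i$. These normals are $1$-transverse, so $\nu$-transverse for any $\nu\le1$. Choosing $c\le C_0$, the count is at most $A_{k,d,\la}$, and this gives \eqref{Tdk} uniformly in the direction of $\Sigma$. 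The only delicate points are constructing $\phi$ with compact Fourier support (a standard Fejér-type construction) and checking that the band width matches the definition of $A_{k,d,\la}$. If $c$ must be larger than $C_0$, cover each band by $O(1)$ bands of width $C_0$; this changes only the constant.

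For sharpness \eqref{Tdksharp}, fix $\la$ and take a transverse configuration and lattice set $S=\E_\la\cap\bigcap_j B(u_j,x_{0,j})$ nearly attaining $A_{k,d,\la}$. Using Lemma \ref{lem:transverse-slab}, $S$ lies in the $O(1)$-neighborhood of a $(d-k)$-plane $W$. Choose $\Sigma$ through the origin with tangent plane $V=W^\perp$, of dimension $k$. For $\xi\in S$, the projection $P\xi$ lies in a ball of radius $O(1)$ around a fixed point $p$. Set $e_\la=\sum_{\xi\in S}e^{2\pi i\xi\cdot x}$, so $\|e_\la\|_{L^2(\T^d)}^2=\#S$. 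On a small piece $\Sigma'=\{t\in[0,\delta]^k\}$ of $\Sigma$,
\[
e_\la(t)=e^{2\pi i p\cdot t}\sum_{\xi\in S}e^{2\pi i(P\xi-p)\cdot t},
\]
and each phase is $O(\delta)$. Hence $|e_\la|\ge \#S/2$ on $\Sigma'$ for a small absolute $\delta$, which gives $\|e_\la\|_{L^2(\Sigma)}^2\gs(\#S)^2\gs A_{k,d,\la}\|e_\la\|^2_{L^2}$.

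The main obstacle I expect is a technicality in the sharpness part. The plane $V$ may be irrational, so $\Sigma$ need not close up, but this is fine because $\Sigma$ is only required to be a segment of unit diameter. One also needs the constant in the neighborhood of $W$ to be uniform in $\nu$, which Lemma \ref{lem:transverse-slab} should provide with $\nu$-dependence allowed by the notation convention.
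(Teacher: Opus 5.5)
Your proposal is correct and follows essentially the same route as the paper. For the upper bound, both you and the paper majorize the indicator of the parameter domain by a weight with compactly supported Fourier transform, so the kernel forces $|(\xi-\eta)\cdot v_i|\le c\le C_0$, and then bound the row sums by $A_{k,d,\lambda}$ via Schur's test (only $|\widehat\phi|\lesssim 1$ is needed, so your translated-sum construction suffices). For sharpness, both take the patch tangent to the span of the band normals and the coherent sum over the extremal lattice set.
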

	Therfore, Conjecture \ref{Aconj} implies the upper bounds in Conjectures \ref{BRconj} and \ref{HZconj} for totally geodesic submanifolds.
	\begin{proof}
		We prove the upper bound. After covering $\Sigma$ by $O(1)$ coordinate patches, it is enough to treat a patch of the form
		\[
		x=x_0+Lt,
		\qquad t\in\Omega\subset\mathbb R^k,
		\]
		where $L:\mathbb R^k\to\mathbb R^d$ is an isometric embedding and $\Omega$ has bounded diameter. Thus, the columns $Le_1,\ldots,Le_k$ are orthonormal tangent vectors. Write
		\[
		e_\lambda(x)=\sum_{n\in \E_\lambda} a_n e^{2\pi i n\cdot x},
		\qquad \E_\lambda=\{n\in\mathbb Z^d: |n|=\lambda\}.
		\]
		Then
		\[
		\int_\Sigma |e_\lambda|^2\,d\sigma
		\ls
		\int_\Omega
		\Big|\sum_{n\in \E_\lambda}a_n e^{2\pi i n\cdot x_0}e^{2\pi i t\cdot L^Tn}\Big|^2dt.
		\]
		
		Choose a fixed nonnegative function $\Psi\in L^1(\mathbb R^k)$ such that $\Psi\ge \mathbf1_\Omega$, $\widehat\Psi\ge0$, and $\widehat\Psi$ is supported in a fixed small cube $[-c_0,c_0]^k$, with $c_0\le C_0$. Expanding the square after majorizing $\mathbf 1_\Omega$ by $\Psi$ gives
		\[
		\int_\Omega
		\Big|\sum_{n\in \E_\lambda}a_n e^{2\pi i n\cdot x_0}e^{2\pi i t\cdot L^Tn}\Big|^2dt
		\le
		\sum_{m,n\in \E_\lambda} a_m\overline{a_n}e^{2\pi i(m-n)\cdot x_0}\widehat\Psi(L^T(n-m)).
		\]
		Here $\widehat\Psi(L^T(n-m))$ is supported where
		\[
		|(n-m)\cdot Le_j|\le c_0,
		\qquad j=1,\ldots,k.
		\]
		For each fixed $m$, the possible $n$ therefore lies in the intersection of the $k$ unit-width bands on $\lambda S^{d-1}$ centered at $m$ with normals $Le_1,\ldots,Le_k$. Since these normals are orthonormal, they are transverse, and the number of such $n$ is at most $A_{k,d,\lambda}$. We have
		\[\max_{m\in E_\la}\sum_{n\in E_\la}|\widehat\Psi(L^T(n-m))|\ls A_{k,d,\la}.\]
		Schur's test gives
		\[
		\sum_{m,n\in \E_\lambda}\left|a_m\overline{a_n}\widehat\Psi(L^T(n-m))\right|
		\ls A_{k,d,\lambda}\sum_{n\in \E_\lambda}|a_n|^2.
		\]
		This proves \eqref{Tdk}.
		
		Next, we prove the lower bound. Choose a transverse \(k\)-tuple of bands and a set
		\[
		P\subset \E_\lambda\cap\bigcap_{j=1}^k B(u_j,x_{0,j}),\ \text{with}\  \#P=A_{k,d,\la}.
		\]
		Let \(n_j=u_j/|u_j|\), and set
		\(V=\operatorname{span}(n_1,\ldots,n_k)\).  Since
		\(|n_1\wedge\cdots\wedge n_k|\ge\nu\), the map
		\[
		v\in V\longmapsto (n_1\cdot v,\ldots,n_k\cdot v)\in\mathbb R^k
		\]
		has bounded inverse norm. Hence, for all
		\(p,p'\in P\), $|\pi_V(p-p')|\ls1.$
		Choose an isometric embedding \(L:\mathbb R^k\to\mathbb R^d\) with
		range \(V\).  Let \(\Omega_0\subset\mathbb R^k\) be a fixed ball of sufficiently
		small radius, depending only on \(d,k,\nu,C_0\), such that
		\[
		|t\cdot L^T(p-p')|\ll 1
		\qquad
		(t\in\Omega_0,\ p,p'\in P).
		\]
		Let \(\Sigma\) be a fixed unit-size totally geodesic patch parallel to \(V\)
		and containing the subpatch \(L\Omega_0\).  Define
		\[
		e_\lambda(x)=A_{k,d,\la}^{-1/2}\sum_{p\in P} e^{2\pi ip\cdot x}.
		\]
		Then $\|e_\la\|_{L^2(\mathbb{T}^d)}= 1$.
		On the subpatch \(x=Lt\), \(t\in\Omega_0\), we have
		\[
		\begin{aligned}
			\|e_\lambda\|_{L^2(\Sigma)}^2\ge	\int_{\Omega_0}|e_\lambda(Lt)|^2\,dt
			&=
			A_{k,d,\la}^{-1}\sum_{p,p'\in P}
			\int_{\Omega_0} e^{2\pi it\cdot L^T(p-p')}\,dt \gtrsim A_{k,d,\la}
		\end{aligned}
		\]
		by the choice of \(\Omega_0\).  
		This proves \eqref{Tdksharp}.
	\end{proof}

	\begin{proposition}\label{prop1}
		Let \(\Sigma\subset\mathbb T^d\) be a fixed totally geodesic submanifold of dimension \(k\), with \(m=d-k\ge1\).  Then there are sequence \(\lambda_j\to\infty\) and $L^2$-normalized eigenfunctions \(e_{\lambda_j}\) such that
		\begin{equation}\label{eq:every-sigma-lower}
			\|e_{\lambda_j}\|_{L^2(\Sigma)}
			\gs
			1+\lambda_j^{\frac m2-1}.
		\end{equation} 
	\end{proposition}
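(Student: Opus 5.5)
The plan is to prove the two parts of the lower bound \eqref{eq:every-sigma-lower} separately: a single exponential gives the constant term $1$, and a pigeonhole over the projections of lattice points onto the tangent space of $\Sigma$ gives the term $\lambda^{\frac m2-1}$, along the lines of the sharpness part of Theorem \ref{prop3a}. Since $1+\lambda^{\frac m2-1}\approx\max(1,\lambda^{\frac m2-1})$, it suffices to produce, for each of the two terms, a sequence of eigenfunctions attaining it.

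\emph{The term $1$ (relevant when $m\le 2$).} Take $\lambda_j=j$ and $e_{\lambda_j}(x)=e^{2\pi i\, j e_1\cdot x}$. This is an $L^2$-normalized eigenfunction, and $|e_{\lambda_j}|\equiv1$. Hence $\|e_{\lambda_j}\|_{L^2(\Sigma)}=\vol(\Sigma)^{1/2}\gtrsim1$.

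\emph{The term $\lambda^{\frac m2-1}$ (for $m\ge3$).} Here $k\ge1$ forces $d\ge4$, so Lemma \ref{lem:rd-odd-lower} applies. Let $\lambda\ge3$ be odd.
\begin{enumerate}
\item Parametrize a fixed patch of $\Sigma$ as $x=x_0+Lt$ with $t\in\Omega_0$, where $\Omega_0$ is the unit ball in $\R^k$ (after rescaling $\Sigma$'s fixed size) and $L:\R^k\to\R^d$ is an isometric embedding whose range $V$ is the tangent space of $\Sigma$. Unlike the setting of Proposition \ref{thm:limsup}, $V$ is now fixed and may be irrational, so coordinate slices cannot be used.
\item Pigeonhole on the projections $L^Tn\in\R^k$ for $n\in\E_\lambda$. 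These lie in the ball of radius $\lambda$, which we cover by $O(\lambda^k)$ cubes of side $c$, where $c>0$ is a small constant depending only on $d$. Since $\#\E_\lambda=r_d(\lambda^2)\gtrsim\lambda^{d-2}$, some cube contains a set $P\subset\E_\lambda$ with $\#P\gtrsim\lambda^{d-2-k}=\lambda^{m-2}$. For $p,p'\in P$ this gives $|L^T(p-p')|\le c\sqrt k$, and hence $|t\cdot L^T(p-p')|\le c\sqrt k\le 1/10$ for $t\in\Omega_0$.
\item Choose the coefficients to cancel the phase at the base point $x_0$:
\[
e_\lambda(x)=(\#P)^{-1/2}\sum_{p\in P}e^{-2\pi i p\cdot x_0}e^{2\pi i p\cdot x},
\]
which is $L^2(\mathbb T^d)$-normalized. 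On the patch,
\[
|e_\lambda(x_0+Lt)|=(\#P)^{-1/2}\Big|\sum_{p\in P}e^{2\pi i t\cdot L^T(p-p_0)}\Big|\ge (\#P)^{-1/2}\cdot\tfrac12\#P
\]
for a fixed $p_0\in P$, because every summand has real part at least $\cos(2\pi/10)>1/2$.
\item Integrating over $\Omega_0$ gives $\|e_\lambda\|_{L^2(\Sigma)}^2\gtrsim\#P\gtrsim\lambda^{m-2}$ along all large odd $\lambda$. This yields $\|e_\lambda\|_{L^2(\Sigma)}\gtrsim\lambda^{\frac m2-1}$.
\end{enumerate}

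The main point needing care is step 3. In the sharpness argument of Theorem \ref{prop3a} the patch passes through the origin, but here $\Sigma$ is fixed, so the phases $e^{2\pi i p\cdot x_0}$ must be absorbed into the coefficients; otherwise the sum could cancel. The other thing to check is that the pigeonhole in step 2 needs only a lower bound on the total number of lattice points $r_d(\lambda^2)$, so it works for an arbitrary, possibly irrational, direction $V$. The constants depend only on $d$, $k$ and the size of $\Sigma$.
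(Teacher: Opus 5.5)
Your proposal is correct and follows essentially the same route as the paper: a single exponential handles $m\le 2$, and for $m\ge 3$ you pigeonhole the projections $L^Tn$, $n\in\E_\lambda$, into $O(\lambda^k)$ small cells using $r_d(\lambda^2)\gtrsim\lambda^{d-2}$, absorb the base-point phases into the coefficients, and integrate. The only cosmetic difference is in fixing sizes: you take a unit parameter ball and cells of side $c$, whereas the paper takes a ball $B(t_0,r_0)$ inside the patch and cells of size $\delta$ with $\delta r_0\ll 1$. If the unit ball does not fit in $\Sigma$, you should likewise use a ball of radius $r_0$ and let $c$ depend on $r_0$.
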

	\begin{proof}
		If \(m\le2\),  we choose
		\(e_{\lambda_j}(x)=e^{2\pi ij n_0\cdot x}\), with \(0\neq n_0\in\mathbb Z^d\) fixed
		and \(\lambda_j=2\pi j|n_0|\), and then $\|e_{\lambda_j}\|_{L^2(\Sigma)}\gs 1.$
		It remains to consider \(m\ge3\) and $d\ge4$.

		Let $U$ be the tangent $k$-plane of $\Sigma$.  Since \(\Sigma\) is totally geodesic, on a coordinate patch we may write
		\[
		x=x_0+Lt,
		\qquad
		t\in\Omega\subset\R^k,
		\]
		where \(L:\R^k\to\R^d\) is an isometric embedding with range \(U\). We may choose a ball $B(t_0,r_0)\subset\Omega$
		with \(r_0>0\).  Choose small \(\delta>0\) such that $\delta r_0\ll1$.  
		
		For \(d\ge4\) and odd \(\lambda\), Lemma \ref{lem:rd-odd-lower} gives \(\#\E_\lambda=r_d(\lambda^2)\gs \lambda^{d-2}\). Since the projection of \(\E_\lambda\) onto $U$ lies in a \(k\)-dimensional ball of radius \(\lambda\), it may be covered by \(O(\lambda^k)\) $\delta$-balls in \(U\). By the pigeonhole principle, one $\delta$-ball contains the projections of \(\gs \la^{d-k-2}\) points of $\E_\la$. We denote this set of points of $\E_\la$ by $S_\la$, and set $N_\la=\# S_\la\gs \la^{d-k-2}$. 
		Then there is a vector \(\eta_\lambda\in\R^k\) such that
		\[
		|L^Tn-\eta_\lambda|\le \delta,
		\qquad n\in S_\lambda.
		\]
		
		Let
		\[
		e_{\lambda}(x)=
		N_\lambda^{-1/2}
		\sum_{n\in S_\lambda}
		e^{-2\pi in\cdot (x_0+Lt_0)}e^{2\pi in\cdot x}.
		\]
		Then $\|e_{\lambda}\|_{L^2(\T^d)}^2
		=1$.
		We have
		\[
		|e_{\lambda}(x_0+Lt)|=N_\lambda^{-1/2}\Big|\sum_{n\in S_\la}e^{2\pi i(t-t_0)\cdot(L^Tn-\eta_\la)}\Big|\gs N_\lambda^{1/2}\,
		\qquad t\in B(t_0,r_0).
		\]
		Therefore,
		\begin{align*}
			\|e_{\lambda}\|_{L^2(\Sigma)}^2\ge
			\int_{B(t_0,r_0)} |e_{\lambda}(x_0+Lt)|^2\dd t \gs N_\la\gs \lambda^{d-k-2}.
		\end{align*}
		This yields \eqref{eq:every-sigma-lower}.
	\end{proof}
	
	\section{Slicing and packing}\label{sec3}
	
	In this section, we use a slicing and packing method to establish new estimates for the number $A_{k,d,\la}$. Throughout this section, if $U\subset\R^d$ is a linear subspace, we write
	$\pi_U:\R^d\to U$ for the orthogonal projection onto $U$. We first show some elementary estimates for the number of integer points in $\lambda S^{d-1}$. 
	
	We shall use affine 2-planes to slice the spherical regions. Note that any affine 2-plane intersects the sphere $\la S^d$ in a circle of radius at most $\la$. The following Lemma~\ref{lem:two-parameter-sphere} follows immediately from \cite[Lemma~4]{hzapde}, we include the statement for the sake of completeness.
	
	\begin{lemma}\label{lem:two-parameter-sphere}
		If $\Gamma\subset \mathbb{R}^d$ is an affine 2-plane, then for every $\eps>0$,
		\begin{equation}\label{eq:rank-two-section}
			\#\bigl(\Gamma\cap\Z^d\cap\lambda S^{d-1}\bigr)\ls_\eps \lambda^\eps.
		\end{equation}
	\end{lemma}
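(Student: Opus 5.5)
The plan is to reduce the lattice points on $\Gamma\cap\lambda S^{d-1}$ to representations of an integer by a positive definite binary quadratic form with integer coefficients, and then apply the divisor bound in imaginary quadratic fields. We may assume $\Gamma$ contains at least three non-collinear points of $\Z^d\cap\lambda S^{d-1}$; otherwise the count is at most $2$ (a line meets a sphere in at most two points). Then $\Gamma$ is a rational affine plane. Fix one lattice point $p_0\in\Gamma\cap\Z^d\cap\lambda S^{d-1}$. The set $\Lambda=(\Gamma-p_0)\cap\Z^d$ is a rank-two lattice with a basis $v_1,v_2\in\Z^d$. Every lattice point of $\Gamma$ has the form $p_0+av_1+bv_2$ with $(a,b)\in\Z^2$.

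The sphere condition $|p_0+av_1+bv_2|^2=\lambda^2$ becomes, using $|p_0|^2=\lambda^2$,
\[
Q(a,b)+2a\,(p_0\cdot v_1)+2b\,(p_0\cdot v_2)=0,
\qquad
Q(a,b)=|v_1|^2a^2+2(v_1\cdot v_2)ab+|v_2|^2b^2 .
\]
Here $Q$ is a positive definite integral form with discriminant $-4D$, where $D=|v_1|^2|v_2|^2-(v_1\cdot v_2)^2=|v_1\wedge v_2|^2\in\N$. Completing the square, as one does for a circle, turns this into an equation
\[
Q(A,B)=N,
\qquad (A,B)=(Da+\alpha,\,Db+\beta),
\]
with integers $\alpha,\beta$ determined by $p_0,v_1,v_2$. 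The right-hand side is $N=D\cdot(\text{squared radius of the circle in }\Gamma)$, suitably scaled, and is a positive integer. The map $(a,b)\mapsto(A,B)$ is injective, so the count is at most $r_Q(N)$, the number of representations of $N$ by $Q$.

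Next, multiply through by the leading coefficient to get
\[
|v_1|^2\,Q(A,B)=X^2+DY^2,
\qquad X=|v_1|^2A+(v_1\cdot v_2)B,\quad Y=B .
\]
This is again injective, so the count is at most $\#\{(X,Y):X^2+DY^2=|v_1|^2N\}$. This is the number of elements of norm $|v_1|^2N$ in the order $\Z[\sqrt{-D}]$, which is at most $6\,\tau(|v_1|^2N)\cdot O(1)$ via the standard bound: the number of ideals of given norm in $\mathbb Q(\sqrt{-D})$ is at most $\tau$, and units are at most $6$. The divisor bound then gives $\ls_\eps (|v_1|^2N)^{\eps}$.

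The main obstacle is making this bound uniform, i.e.\ showing $|v_1|^2N\le\lambda^{C}$ for a constant $C$ depending only on $d$. That suffices, since then $\tau(|v_1|^2N)\ls_\eps\lambda^{C\eps}$. To handle this, choose $v_1,v_2$ to be differences of the three non-collinear lattice points on the sphere, so that $|v_i|\le2\lambda$. These generate a finite-index sublattice $\Lambda'$ of $\Lambda$, and parametrizing by $\Lambda'$ instead of $\Lambda$ only covers the points in one coset. To cover all lattice points, one instead uses a reduced basis of $\Lambda$, whose lengths are bounded by $|v_1|,|v_2|$ up to constants because successive minima are dominated by any independent vectors. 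Then $D\le|v_1|^2|v_2|^2\ls\lambda^4$, the squared radius is at most $\lambda^2$, $N\ls\lambda^{O(1)}$, and the $\lambda^\eps$ bound follows after renaming $\eps$. This is exactly the content of \cite[Lemma~4]{hzapde}, so alternatively the statement can simply be cited.
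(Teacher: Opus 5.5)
Your argument is correct. Note that the paper gives no proof of this lemma: it simply states it as a consequence of \cite[Lemma~4]{hzapde}. What you supply is a self-contained proof. You pass to the rank-two lattice $(\Gamma-p_0)\cap\Z^d$ and complete the square to get $Q(A,B)=N$. Precisely, $N=D\,c^{T}\mathrm{adj}(G)\,c=D^2r^2$, where $c=(p_0\cdot v_1,p_0\cdot v_2)$, $G$ is the Gram matrix and $r$ is the radius of the circle $\Gamma\cap\la S^{d-1}$; this is the exact form of your ``$D$ times the squared radius, suitably scaled''. You then diagonalize to $X^2+DY^2=|v_1|^2N$ and bound the number of elements of norm $n$ in the ring of integers of $\mathbb Q(\sqrt{-D})$ by $6\tau(n)$, since there are at most six units and at most $\tau(n)$ ideals of norm $n$.

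The one point that needs care is uniformity in $\Gamma$, and you handle it correctly. A Gauss-reduced basis of the rank-two lattice has lengths equal to its successive minima. These are at most the lengths of any two independent lattice vectors, hence at most $2\la$ using differences of your three points. This gives $D\le 16\la^4$, $N\le D^2\la^2$, and so $|v_1|^2N\ls\la^{12}$ and $\tau(|v_1|^2N)\ls_\eps\la^{\eps}$. It would be cleaner to fix the reduced basis at the outset rather than at the end, since the algebra works for any basis and only the size bound needs a short one.

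The citation buys brevity. Your version makes explicit that the bound is uniform over all affine 2-planes, which is exactly what Proposition~\ref{prop:fibration} uses, since there the plane changes with each value list $(a_1\cdot x,\dots,a_{d-2}\cdot x)$.
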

	The next lemma is a basic volume estimate for counting lattice points.
	\begin{lemma}\label{lem:rotated-box}
		Let \(\R^d=U\oplus U^\perp\) be an orthogonal decomposition with \(\dim U=q\).  If
		\(L\ge1\), \(C\ge1\), and \(z\in\R^d\), then
		\[
		\#\{m\in\Z^d:|\pi_U(m-z)|\le L,
		|\pi_{U^\perp}(m-z)|\le C\}
		\ls L^q C^{d-q}.
		\]
		The implicit constant only depends on $d$ and $q$.
	\end{lemma}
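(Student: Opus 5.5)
The statement to prove is Lemma \ref{lem:rotated-box}. The plan is a standard volume (packing) argument: disjoint unit cubes around lattice points all fit inside a slightly enlarged box.

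First, denote the set in question by
\[
S=\{m\in\Z^d:|\pi_U(m-z)|\le L,\ |\pi_{U^\perp}(m-z)|\le C\}.
\]
To each $m\in S$ assign the cube $Q_m=m+[-\tfrac12,\tfrac12)^d$. These cubes are pairwise disjoint and each has volume $1$. Hence $\#S=\vol\bigl(\bigcup_{m\in S}Q_m\bigr)$.

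Second, we locate each cube. Every point $y\in Q_m$ satisfies $|y-m|\le \sqrt d/2$. Since orthogonal projections have norm at most $1$,
\[
|\pi_U(y-z)|\le L+\sqrt d/2,\qquad |\pi_{U^\perp}(y-z)|\le C+\sqrt d/2 .
\]
So all the cubes lie in the set
\[
z+\Bigl(B_U\bigl(0,L+\tfrac{\sqrt d}{2}\bigr)\times B_{U^\perp}\bigl(0,C+\tfrac{\sqrt d}{2}\bigr)\Bigr).
\]
Because $U\oplus U^\perp$ is an orthogonal decomposition, Lebesgue measure on $\R^d$ is the product of the Lebesgue measures on $U\cong\R^q$ and $U^\perp\cong\R^{d-q}$. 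The volume of this product set is therefore
\[
\omega_q\bigl(L+\tfrac{\sqrt d}{2}\bigr)^q\,\omega_{d-q}\bigl(C+\tfrac{\sqrt d}{2}\bigr)^{d-q}.
\]

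Third, we use $L,C\ge1$. This gives $L+\sqrt d/2\le(1+\sqrt d/2)L$, and likewise for $C$. It follows that $\#S\le c_{d,q}L^qC^{d-q}$, with a constant depending only on $d$ and $q$.

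There is no real obstacle in this argument. The only point requiring care is the hypothesis $L,C\ge1$: it is what lets the additive enlargement $\sqrt d/2$ be absorbed multiplicatively. Without it the bound fails, since the box always contains at least one lattice point when it contains $z\in\Z^d$.
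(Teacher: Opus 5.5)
Your argument is correct and is essentially the paper's proof: you place disjoint unit cubes at the lattice points, fit them into a slight enlargement of the box, compare volumes, and use $L,C\ge1$ to absorb the $\sqrt d/2$ enlargement. The only cosmetic difference is in how the enlargement is absorbed. The paper writes $K+B(0,\sqrt d/2)\subset(1+\sqrt d/2)K$, using that $K$ contains the unit ball, whereas you enlarge each factor separately and compute the product volume directly.
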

	
	\begin{proof}
		Let
		\[
		K=\{x:|\pi_Ux|\le L,\ |\pi_{U^\perp}x|\le C\}.
		\]
		Since \(L,C\ge1\), the convex symmetric body \(K\) contains the Euclidean unit
		ball.  The unit cubes centered at the lattice points in \(z+K\) are disjoint and
		are contained in \(z+K+B(0,\sqrt d/2)\subset z+(1+\sqrt d/2)K\). Therefore the
		number of such lattice points is bounded by a dimensional constant times the volume
		\(\vol(K)\approx L^qC^{d-q}\).
	\end{proof}
	\begin{definition}\label{def:T}{\rm 
			For \(0\le q\le d-1\), \(1\le L\le\la^{1/2}\), and fixed \(C\ge1\), let
			\(T_{d,q}(\la,L;C)\) be the supremum of \(\#P\) over all finite sets
			\(P\subset\Z^d\cap\la S^{d-1}\) for which there is an orthogonal decomposition
			\(\R^d=U\oplus U^\perp\), \(\dim U=q\), such that
			\[
			\operatorname{diam}(\pi_UP)\le L,
			\qquad
			\operatorname{diam}(\pi_{U^\perp}P)\le C.
			\]
			We call such sets $P$  \textbf{admissible} for \(T_{d,q}(\la,L;C)\).}
	\end{definition}
	These admissible sets $P$ will serve as standard bricks to count lattice points in spherical regions. We first show an estimate for the upper bound of \(T_{d,q}(\la,L;C)\). 
	
	\begin{proposition}\label{prop:fibration}
		For every \(\eps>0\), we have
		\[
		T_{d,q}(\la,L;C)\ls_\eps L^{q(d-2)/d}\la^\eps .
		\]
	\end{proposition}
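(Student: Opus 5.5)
The plan is to slice $P$ by a single family of parallel lattice $2$-planes. Lemma~\ref{lem:two-parameter-sphere} bounds the contribution of each plane by $O_\eps(\la^\eps)$. It then remains to show that only $O(L^{q(d-2)/d})$ planes of the family meet $P$, and the choice of the $2$-dimensional direction will come from Minkowski's second theorem.

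Fix an admissible $P$ with decomposition $\R^d=U\oplus U^\perp$. Set
\[
K=\{x\in\R^d:|\pi_Ux|\le L,\ |\pi_{U^\perp}x|\le C\},
\]
a convex symmetric body with $\vol(K)\approx L^qC^{d-q}\approx L^q$, since $C$ is fixed. Admissibility gives $P-P\subset K$. Let $\mu_1\le\cdots\le\mu_d$ be the successive minima of $K$ with respect to $\Z^d$, and choose linearly independent $v_1,v_2\in\Z^d$ with $v_i\in\mu_iK$. Minkowski's second theorem gives $\mu_1\cdots\mu_d\approx\vol(K)^{-1}$. Since $\mu_1,\mu_2$ are the two smallest minima, $\mu_1\mu_2\le(\mu_1\cdots\mu_d)^{2/d}\ls L^{-2q/d}$.

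The degenerate cases are disposed of first.
\begin{itemize}
\item If $\mu_2>1$, then all nonzero lattice vectors in $K$ are collinear. Hence $P-P$ lies on a line through $0$, so $P$ lies on a line, and a line meets the sphere in at most $2$ points. Thus $\#P\le2$.
\item So assume $\mu_1\le\mu_2\le1$, and let $\Gamma=\spanop(v_1,v_2)$. Every point of $P$ lies on some translate $p+\Gamma$ with $p\in P$. By Lemma~\ref{lem:two-parameter-sphere}, each such affine $2$-plane contains $\ls_\eps\la^\eps$ points of $\Z^d\cap\la S^{d-1}$.
\end{itemize}
It therefore suffices to show that the number $N$ of distinct planes $p+\Gamma$, $p\in P$, satisfies $N\ls\vol(K)\,\mu_1\mu_2\ls L^{q(d-2)/d}$.

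To count the planes I use a packing argument in the spirit of Lemma~\ref{lem:rotated-box}. Pick one representative $p_1,\dots,p_N\in P$ per plane, and attach to each the grid
\[
G_i=\{p_i+a v_1+b v_2:\ a,b\in\Z,\ |a|\le 1/\mu_1,\ |b|\le 1/\mu_2\}.
\]
The key properties of these grids are as follows.
\begin{itemize}
\item Distinct planes are disjoint, so the $G_i$ are pairwise disjoint.
\item Each $G_i$ has $\gs(\mu_1\mu_2)^{-1}$ lattice points.
\item Since $av_1,bv_2\in K$, every $G_i$ is contained in $p_1+3K$, using $p_i-p_1\in K$.
\end{itemize}
Hence
\[
N(\mu_1\mu_2)^{-1}\ls\#(\Z^d\cap(p_1+3K))\ls L^qC^{d-q}
\]
by Lemma~\ref{lem:rotated-box}, which gives $N\ls L^q\mu_1\mu_2\ls L^{q(d-2)/d}$. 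Multiplying the number of planes by the per-plane bound yields $\#P\ls_\eps L^{q(d-2)/d}\la^\eps$.

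The main obstacle is making the Minkowski step fully rigorous with constants independent of the orientation of $U$. We need that lattice vectors realizing $\mu_1,\mu_2$ exist, and that the grids really have $\approx(\mu_1\mu_2)^{-1}$ points when some $1/\mu_i$ is close to $1$; the latter costs at most a constant factor. All constants depend only on $d,q,C$, as required. The hypothesis $L\le\la^{1/2}$ is not needed for this argument.
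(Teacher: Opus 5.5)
Your proof is correct, but it runs the geometry of numbers on the primal side, whereas the paper works on the dual side. The paper takes the support function $\rho(a)=C|\pi_{U^\perp}a|+L|\pi_Ua|=\sup_{x\in K}|a\cdot x|$ and applies Minkowski's second theorem to the polar body $\{\rho\le1\}$, whose volume is $\approx L^{-q}C^{-(d-q)}$ via the map $T$. This gives $d-2$ independent integer vectors $a_1,\dots,a_{d-2}$ with $\prod_{j\le d-2}\rho(a_j)\lesssim L^{q(d-2)/d}$. Each linear form $a_j\cdot x$ then takes only $O(\rho(a_j))$ integer values on $P$, and fixing the value list cuts out an affine $2$-plane. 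So the number of planes is read off directly as a product, with no packing step and no degenerate case.

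You instead take the two shortest vectors of $K$ itself and slice by cosets of $\spanop(v_1,v_2)$. You then have to count the cosets by a separate packing argument (disjoint grids of size $\ge(\mu_1\mu_2)^{-1}$ inside $p_1+3K$, plus Lemma~\ref{lem:rotated-box}), and treat $\mu_2>1$ separately. Your route needs only Minkowski's theorem for $K$ and the volume count already in the paper, without passing to the polar body. The paper's route is shorter, because the same linear forms both define the planes and bound their number.

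The steps you flag as delicate are fine. The minima are attained by discreteness of $\Z^d$ and compactness of $K$. For $t\ge1$ one has $2\lfloor t\rfloor+1\ge t$, which gives $\#G_i\ge(\mu_1\mu_2)^{-1}$ with no loss.
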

	
	\begin{proof}
		Let $V=U^\perp$. For \(a\in\Z^d\), set
		\[
		\rho(a)=C|\pi_Va|+L|\pi_Ua|.
		\]
		And consider the map \(T:\R^d\to\R^d\) defined by \(T(a)=C\pi_V(a)+L\pi_U(a).\)
		Since $V$ are orthogonal to $U$, we have
		\[
		|T(a)|\le \rho(a)\le \sqrt2 |T(a)| \quad \text{ and } \quad |\det T|=C^{d-q}L^q.
		\]
		We can choose linearly independent \(a_1,\ldots,a_d\in\Z^d\) so that
		\(\rho(a_1)\le\cdots\le\rho(a_d)\), with
		\[
		\prod_{j=1}^d \rho(a_j)\ls L^q.
		\]
		By definition, \(C,L\ge1\), and hence
		\(
		\rho(a_j)\ge |a_j|\ge1
		\)
		for every \(j\). Therefore
		\[
		\prod_{j=1}^{d-2}\rho(a_j)
		\le
		\Big(\prod_{j=1}^{d}\rho(a_j)\Big)^{(d-2)/d}
		\ls L^{q(d-2)/d}.
		\]
		For \(x,y\in P\), the assumptions give
		\[
		|\pi_V(x-y)|\le C,
		\qquad
		|\pi_U(x-y)|\le L.
		\]
		Hence, for \(1\le j\le d-2\),
		\[
		|a_j\cdot(x-y)|
		\le
		|\pi_Va_j|\,|\pi_V(x-y)|
		+
		|\pi_Ua_j|\,|\pi_U(x-y)|
		\le \rho(a_j).
		\]
		Since \(a_j\cdot x\) is an integer, it takes at most \(O(\rho(a_j))\)
		different values as \(x\) runs over \(P\). Thus the number of possible value
		lists
		\(
		(a_1\cdot x,\ldots,a_{d-2}\cdot x)
		\)
		is at most
		\[
		\prod_{j=1}^{d-2}O(\rho(a_j))
		\ls L^{q(d-2)/d}.
		\]
		Fix one of such value lists.  The remaining points satisfy
		\[
		a_j\cdot x=m_j,
		\qquad 1\le j\le d-2,
		\]
		for fixed integers \(m_j\).  The corresponding linear
		equations have two free integer variables, and thus determine an affine 2-plane. Consequently by Lemma~\ref{lem:two-parameter-sphere}, this fixed value list contributes at
		most \(O_\eps(\la^\eps)\) points on \(\la S^{d-1}\).
		Summing over all value lists gives
		\[
		T_{d,q}(\la,L;C)
		\ls_\eps
		L^{q(d-2)/d}\la^\eps.
		\]
	\end{proof}

	Proposition~\ref{prop:fibration} is a preliminary estimate. To obtain the exponent in Theorem~\ref{thm1}, we refine the estimate for $T_{d,q}(\la,L;C)$ by establishing a recurrence  in $q$.
	\begin{proposition}\label{prop:local}
		We have
		\[
		T_{d,q}(\la,L;C)\ls L^{q-1+2^{-q}}.
		\]
		
	\end{proposition}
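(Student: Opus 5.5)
The plan is to induct on $q$. Each step will use a Cauchy--Schwarz/difference argument that converts one "long" direction of $U$ into a "short" one. The exponent $e_q:=q-1+2^{-q}$ satisfies $e_0=0$ and
\[
e_q=\tfrac12\,(q+e_{q-1}),
\]
so it suffices to prove
\[
T_{d,q}(\la,L;C)\ls \bigl(L^q\,T_{d,q-1}(\la,L;C')\bigr)^{1/2}+1
\]
with $C'$ depending only on $C$ (say $C'=C+C^2$). Since $q\le d-1$ is bounded, the constants $C,C',C'',\dots$ stay bounded in terms of $C$ and $d$, which is allowed. For the base case $q=0$, an admissible $P$ has diameter $\le C$, so $\#P\ls C^d\ls 1=L^{e_0}$.

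For the inductive step, fix an admissible $P$ with decomposition $U\oplus V$, $\dim U=q$. I count ordered pairs:
\[
(\#P)^2=\sum_h \#\{x\in P:\ x-h\in P\},
\]
where $h=x-y$ ranges over differences. Every such $h$ is an integer vector with $|\pi_U h|\le L$ and $|\pi_V h|\le C$. By Lemma~\ref{lem:rotated-box} there are $\ls L^qC^{d-q}$ possible $h$. Fix $h$. Since $|x|=|x-h|=\la$, we get $2x\cdot h=|h|^2$, so the relevant $x$ lie in a fixed hyperplane orthogonal to $h$.

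There are two kinds of $h$. If $|\pi_Uh|<1$, then $h$ lies in a box of size $1\times C$, so there are $O(1)$ such $h$, each contributing at most $\#P$. If $|\pi_Uh|\ge1$, let $w=\pi_Uh/|\pi_Uh|\in U$. For $x,x'$ in the slice,
\[
(x-x')\cdot w=-\frac{(x-x')\cdot\pi_Vh}{|\pi_Uh|},
\]
which has absolute value $\le C\cdot C/1$. So the slice is admissible for $T_{d,q-1}(\la,L;C')$ with respect to
\[
U'=U\cap w^\perp,\qquad V'=V\oplus\R w.
\]
Indeed, the $U'$-diameter is still $\le L$, and the $V'$-diameter is $\le C+C^2$.

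Summing the two cases gives
\[
(\#P)^2\ls \#P+L^q\,T_{d,q-1}(\la,L;C').
\]
Hence $\#P\ls 1+L^{(q+e_{q-1})/2}=1+L^{e_q}$, which closes the induction.

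The point needing the most care is the transfer of admissibility to the slice: the one-sided tilt bound $C|\pi_Vh|/|\pi_Uh|$ needs $|\pi_Uh|\gs1$. That is exactly why the small-$h$ pairs are split off and bounded trivially. I also need the enlarged width constants $C\to C+C^2$ to be tracked through at most $d$ iterations, so that the final implicit constant depends only on $d$ and $C$. The hypothesis $L\le\la^{1/2}$ is not needed in this argument.
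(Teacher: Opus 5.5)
Your proposal is correct and follows essentially the same route as the paper: you count ordered pairs via $N^2=\sum_h R(h)$ over the $O(L^q)$ possible differences, and you use the sphere relation $2h\cdot x=|h|^2$ to pin the coordinate along $w=\pi_U h/|\pi_U h|$ to an $O(C^2)$ window. This makes each slice admissible for $T_{d,q-1}(\la,L;C')$ with respect to $(U\cap w^\perp)\oplus(U^\perp\oplus\R w)$, which yields the recurrence $e_q=\tfrac12(q+e_{q-1})$. The only cosmetic difference is that you split off the bounded differences by the condition $|\pi_U h|<1$, whereas the paper uses $|h|\le H$ with $H\ge 4C+10$ (which there guarantees $|\pi_U h|\ge |h|/2$); both splittings serve only to make the tilt bound $C^2/|\pi_U h|$ uniformly bounded.
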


	\begin{proof}
		The case $q=0$ is immediate. Indeed, a Euclidean ball of fixed radius contains only $O(1)$ lattice points,
		so $T_{d,0}(\lambda,L;C)\ls 1$.
		
		We now assume $q\ge1$.
		Let $P\subset \Z^d\cap\lambda S^{d-1}$ be admissible for
		$T_{d,q}(\lambda,L;C)$, and let $N=\#P$.  For $h\in\Z^d$, define
		\[
		R(h)=\#\{x\in P:x+h\in P\}.
		\]
		Every ordered pair $(x,y)\in P\times P$ has a unique difference
		$h=y-x$.  Equivalently, writing $y=x+h$, we have
		\[
		N^2=\sum_{h\in\Z^d}R(h).
		\]
		If $R(h)>0$, then $h$ is the difference of two points of $P$. Hence the
		admissibility assumptions imply
		\[
		|\pi_Uh|\le L,
		\qquad
		|\pi_{U^\perp}h|\le C.
		\]
		By Lemma~\ref{lem:rotated-box}, the number of integer vectors satisfying
		these two inequalities is $O(L^q)$.
		
		We separate bounded and unbounded differences.  Fix a constant
		$H\ge 4C+10$.  The contribution of small  $|h|\le H$ is
		$O(N)$, since there are only $O(1)$ such integer vectors and
		$R(h)\le N$ for each $h$.
		
		Now suppose that $R(h)>0$ and large $|h|>H$.  From
		$|\pi_{U^\perp}h|\le C$ and $H\ge4C$ we have
		\[
		|\pi_Uh|
		=\bigl(|h|^2-|\pi_{U^\perp}h|^2\bigr)^{1/2}
		\ge |h|/2.
		\]
		In particular $\pi_Uh\ne0$.  Set
		\[
		\eta=\frac{\pi_Uh}{|\pi_Uh|}\in U.
		\]
		For every $x$ counted by $R(h)$, both $x$ and $x+h$ lie on
		$\lambda S^{d-1}$. Therefore
		\[
		|x+h|^2=|x|^2
		\quad\Longrightarrow\quad
		2h\cdot x+|h|^2=0,
		\]
		or
		\begin{equation}\label{eq:large-h-sphere-relation}
			h\cdot x=-\frac{|h|^2}{2}.
		\end{equation}
		Writing $x=x_U+x_\perp$, with $x_U=\pi_Ux$ and
		$x_\perp=\pi_{U^\perp}x$, equation
		\eqref{eq:large-h-sphere-relation} gives
		\begin{equation}\label{eq:eta-coordinate-control}
			(\pi_Uh)\cdot x_U
			=-\frac{|h|^2}{2}-(\pi_{U^\perp}h)\cdot x_\perp.
		\end{equation}
		If $x,x'$ are both counted by $R(h)$, subtracting
		\eqref{eq:eta-coordinate-control} from $x$ and $x'$ yields
		\[
		|\pi_Uh|\,|\eta\cdot(x_U-x'_U)|
		\le |\pi_{U^\perp}h|\,|x_\perp-x'_\perp|
		\le C^2.
		\]
		Since $|\pi_Uh|\ge H/2$, the scalar coordinate
		$\eta\cdot x_U$ has diameter $O_C(1)$ on the set
		\[
		P_h=\{x\in P:x+h\in P\}.
		\]
		The remaining component of $x_U$ in $U\cap\eta^\perp$ has diameter
		$\le L$, as $\operatorname{diam}(\pi_UP)\le L$.  The component in
		$U^\perp$ is of diameter at most $C$.  Hence $P_h$ is admissible for
		$T_{d,q-1}(\lambda,L;C')$, with respect to the orthogonal decomposition
		\[
		\R^d=(U\cap\eta^\perp)\oplus (U^\perp\oplus\R\eta),
		\]
		where $C'=C'(d,C)$. Thus, every large  $|h|$ satisfies
		\[
		R(h)=\#P_h\le T_{d,q-1}(\lambda,L;C').
		\]
		Combining the estimates for small and large $|h|$ gives
		\[
		N^2
		\ls N+L^qT_{d,q-1}(\lambda,L;C').
		\]
		Consequently,
		\[
		N\ls1+L^{q/2}T_{d,q-1}(\lambda,L;C')^{1/2}.
		\]
		Taking the supremum over all admissible $P$ gives the recurrence
		\begin{equation}\label{eq:T-recurrence-expanded}
			T_{d,q}(\lambda,L;C)
			\ls1+L^{q/2}T_{d,q-1}(\lambda,L;C')^{1/2}.
		\end{equation}
		Because $q\le d-1$, the constants produced by iterating
		\eqref{eq:T-recurrence-expanded} depend only on $d$ and $C$.
		
		Starting from $T_{d,0}\ls1$, define the exponents $\delta_q$ by
		$\delta_0=0$ and
		\[
		\delta_q=\frac q2+\frac{\delta_{q-1}}2.
		\]
		The recurrence gives $T_{d,q}(\lambda,L;C)\ls L^{\delta_q}$, since
		$L\ge1$.  The explicit solution is
		\[
		\delta_q=q-1+2^{-q}.
		\]
		Indeed, this is true for $q=1$, and if
		$\delta_{q-1}=q-2+2^{-(q-1)}$, then
		\[
		\delta_q=\frac q2+\frac{q-2}{2}+2^{-q}=q-1+2^{-q}.
		\]
		This completes the proof.
	\end{proof}
	\begin{remark}{\rm For $1\le L\le \la^{1/2}$, we conjecture that the correct bounds should be $T_{d,1}(\la,L;C)\ls 1$ and $$T_{d,q}(\la,L;C)\ls_\eps L^{q-2}\la^\eps,\quad q\ge2,\ \eps>0.$$ These would imply Conjecture \ref{Aconj}, according to the proof of Propositions \ref{prop:geometric-global} and \ref{prop:endpoint-bound}. Cilleruelo-C\'ordoba \cite{CC1992} proved that $$T_{2,1}(\la,\la^\delta;C)\ls_\delta 1,\ \ \delta<1/2.$$ The case $\delta=1/3$ was proved earlier by Jarnik \cite{Jar1926}. Bourgain-Rudnick \cite[Lemma 2.1]{BR2015} proved that $T_{2,1}(\la,\la^{1/2};C)\ls \log\la$. }
		
	\end{remark}
	
	\begin{lemma}\label{lem:transverse-slab}
		Let \(n_1,\ldots,n_k\in S^{d-1}\) such that
		\(|n_1\wedge\cdots\wedge n_k|\ge\nu\). For \(1\le j\le k\), consider the bands
		\[
		B_j=\{x\in\R^d: |n_j\cdot(x-x_{0,j})|\le C_0\}
		\]
		Put
		\(
		V=\spanop(n_1,\ldots,n_k).
		\)
		Then there is a point \(p\in V\), depending on the bands, such that every
		\(x\in B_1\cap\cdots\cap B_k\) satisfies
		\[
		|\pi_Vx-p|\le C,
		\]
		where \(C=C(d,k,\nu,C_0)\).  In particular, if
		\(x,y\in B_1\cap\cdots\cap B_k\), then $	|\pi_V(x-y)|\ls 1.$
	\end{lemma}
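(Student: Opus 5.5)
The plan is a direct linear-algebra argument. Consider the linear map
\[
N:V\to\R^k,\qquad N(v)=(n_1\cdot v,\ldots,n_k\cdot v).
\]
We show that it is invertible with $\|N^{-1}\|\le C(d,k,\nu)$, and then use it to pin down $\pi_Vx$ up to a bounded error. Since $n_j\cdot x=n_j\cdot\pi_Vx$ for every $x\in\R^d$ (because $n_j\in V$), the band conditions are really conditions on $\pi_Vx$ alone.

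For the invertibility, the $n_j$ are linearly independent because $|n_1\wedge\cdots\wedge n_k|\ge\nu>0$, so $V$ has dimension $k$. In an orthonormal basis of $V$, write $N$ as the $k\times k$ matrix $M$ whose rows are the coordinates of the $n_j$. Then
\[
|\det M|=|n_1\wedge\cdots\wedge n_k|\ge\nu.
\]
Each row is a unit vector, so all singular values of $M$ are at most $\sqrt k$. Since the product of the singular values equals $|\det M|$, the smallest singular value satisfies $\sigma_{\min}\ge\nu\,k^{-(k-1)/2}$. Equivalently, one can bound $M^{-1}=\operatorname{adj}(M)/\det M$ entrywise by Hadamard's inequality. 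Either way, $\|N^{-1}\|\le k^{(k-1)/2}/\nu$.

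For the centering point, set $c_j=n_j\cdot x_{0,j}$ and define $p=N^{-1}(c_1,\ldots,c_k)\in V$, which depends only on the bands. For $x\in B_1\cap\cdots\cap B_k$ we have $|n_j\cdot\pi_Vx-c_j|=|n_j\cdot(x-x_{0,j})|\le C_0$ for each $j$, so
\[
|N(\pi_Vx)-(c_j)_j|\le\sqrt k\,C_0 .
\]
Applying $N^{-1}$ gives $|\pi_Vx-p|\le k^{k/2}C_0/\nu=:C$. The final claim then follows from the triangle inequality: $|\pi_V(x-y)|\le 2C$.

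The only step needing care is the quantitative bound on $N^{-1}$, and specifically confirming that the wedge norm is the Gram determinant $|\det M|$. This holds by the definition $|n_1\wedge\cdots\wedge n_k|^2=\det(n_i\cdot n_j)$. Everything else is routine.
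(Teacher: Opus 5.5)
Your proof is correct and is essentially the paper's argument. The paper works with the Gram matrix $G=(n_i\cdot n_j)=MM^T$: it solves $Gt=\beta$ to obtain the same point $p=\sum_j t_jn_j$, namely the unique $p\in V$ with $n_j\cdot p=n_j\cdot x_{0,j}$, and it then inverts $G$ on the coefficient vector of $\pi_Vx-p$. You instead invert $N$ (i.e.\ $M$) directly, and your singular-value bound makes the paper's implicit constant explicit, giving $C=k^{k/2}C_0/\nu$.
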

	
	\begin{proof} Let $G=(n_i\cdot n_j)_{1\le i,j\le k}$.
		For each \(j\), write
		\[
		\beta_j=n_j\cdot x_{0,j},
		\qquad
		\beta=(\beta_1,\ldots,\beta_k).
		\]
		Since \(\det G\ge\nu^2>0\), one may choose \(t=(t_1,\ldots,t_k)\) so that
		\(
		Gt=\beta,
		\)
		and define
		\[
		p=t_1n_1+\cdots+t_kn_k\in V.
		\]
		Then
		\(
		n_j\cdot p=\beta_j, \forall 1\leq j \leq k.
		\)
		
		We claim that \(\pi_Vx\) stays close to \(p\) for all
		\(x\in B_1\cap\cdots\cap B_k\).  To see this, fix such an \(x\).  Since
		\(n_j\in V\), we have
		\[
		n_j\cdot \pi_Vx=n_j\cdot x.
		\]
		Therefore,
		\[
		|n_j\cdot(\pi_Vx-p)|
		=
		|n_j\cdot x-\beta_j|
		=
		|n_j\cdot(x-x_{0,j})|
		\le C_0.
		\]
		Now write
		\[
		\pi_Vx-p=s_1n_1+\cdots+s_kn_k.
		\]
		If
		\[
		c_j=n_j\cdot(\pi_Vx-p),
		\qquad
		c=(c_1,\ldots,c_k),
		\]
		then $Gs=c$, and
		\[
		|s|\le C(k,\nu)|c|\le C(k,\nu,C_0).
		\]
		Since the vectors \(n_j\) have length \(1\),
		\[
		|\pi_Vx-p|
		=
		|s_1n_1+\cdots+s_kn_k|
		\le C(d,k,\nu,C_0).
		\]
		This proves the first claim. Now if \(x,y\in B_1\cap\cdots\cap B_k\), then
		\[
		|\pi_V(x-y)|
		\le |\pi_Vx-p|+|\pi_Vy-p|
		\le C(d,k,\nu,C_0).
		\]
		Hence \( |\pi_V(x-y)|\ls 1 \).
	\end{proof}
	The following lemma decomposes large spherical regions into a number of standard bricks that can be handled by Propositions \ref{prop:fibration} and \ref{prop:local}.
	\begin{lemma}\label{lem:shell}
		Let \(\R^d=V\oplus W\), with \(m=\dim W\ge2\). Suppose
		\[
		\Omega\subset\{x\in\la S^{d-1}:|\pi_Vx-p|\le C\}
		\]
		for some \(p\in V\).  Let \(R=(\max(\la^2-|p|^2,0))^{1/2}\).
		Then one of the following holds.
		\begin{enumerate}
			\item[(i)] If \(R\ls\la^{1/2}\), then
			\(\operatorname{diam}(\pi_W\Omega)\ls\la^{1/2}\).
			\item[(ii)] If \(R\gg \la^{1/2}\), then \(\Omega\) is covered by
			\(O(\la^{\frac{m-1}2})\) many sets \(\Omega_\alpha\), each satisfying
			\[
			\operatorname{diam}(\pi_{V\oplus\R\omega_\alpha}\Omega_\alpha)\ls1,
			\qquad
			\operatorname{diam}(\pi_{W\cap\omega_\alpha^\perp}\Omega_\alpha)
			\ls\la^{1/2}
			\]
			for some unit vector \(\omega_\alpha\in W\).
		\end{enumerate}
	\end{lemma}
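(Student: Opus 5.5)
The plan is to reduce everything to the geometry of a single sphere in $W$. Write $x=v+w$ with $v=\pi_Vx\in V$ and $w=\pi_Wx\in W$. For $x\in\Omega$ we have $|w|^2=\la^2-|v|^2$ and $|v-p|\le C$. Hence
\[
\bigl||v|^2-|p|^2\bigr|\le |v-p|\,(|v|+|p|)\le C(2|p|+C)\ls \la .
\]
Consequently $\pi_W\Omega$ lies in the spherical shell
\[
S=\{w\in W:\ |\,|w|^2-R_*^2\,|\ls\la\},\qquad R_*^2=\la^2-|p|^2 .
\]
When $R_*^2<0$ we have $|p|>\la$, and the constraint $|v|\le\la$ forces $|p|\le\la+C$. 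In that case $|w|^2\ls\la$ automatically. All the remaining work concerns this shell.

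\emph{Case (i).} Suppose $R\ls\la^{1/2}$. Then every $w\in\pi_W\Omega$ satisfies $|w|^2\le R^2+O(\la)\ls\la$, so $|w|\ls\la^{1/2}$. This gives $\operatorname{diam}(\pi_W\Omega)\ls\la^{1/2}$ immediately.

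\emph{Case (ii).} Suppose $R\gg\la^{1/2}$.
\begin{enumerate}
\item The shell has width $\ls\la/R\ls1$ in the radial variable $|w|$, since $\bigl||w|-R\bigr|\le \bigl||w|^2-R^2\bigr|/(|w|+R)\ls\la/R$.
\item Cover the unit sphere $S^{m-1}\subset W$ by caps of angular radius $\la^{-1/2}$, with centres $\omega_\alpha$. There are $O(\la^{(m-1)/2})$ of them.
\item Set $\Omega_\alpha=\{x\in\Omega:\ w/|w|\in \text{cap}_\alpha\}$.
\end{enumerate}

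For $x\in\Omega_\alpha$, write the angle between $w$ and $\omega_\alpha$ as $\theta\le\la^{-1/2}$. Then:
\begin{itemize}
\item the component of $w$ in $W\cap\omega_\alpha^\perp$ has size $|w|\sin\theta\ls R\la^{-1/2}\le\la^{1/2}$, since $R\le\la$;
\item the component $w\cdot\omega_\alpha=|w|\cos\theta$ varies by at most $O(\la/R)+R\theta^2\ls 1+R/\la\ls1$ over $\Omega_\alpha$.
\end{itemize}
Together with $|v-p|\le C$, this gives $\operatorname{diam}(\pi_{V\oplus\R\omega_\alpha}\Omega_\alpha)\ls1$ and $\operatorname{diam}(\pi_{W\cap\omega_\alpha^\perp}\Omega_\alpha)\ls\la^{1/2}$.

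The main point requiring care is the radial estimate $R\theta^2\ls1$. This is exactly where the choice of cap size $\la^{-1/2}$ and the bound $R\le\la$ interact: with caps of radius $\la^{-1/2}$, the sagitta $R\theta^2$ stays bounded. One also has to keep the $O(\la)$ error in $|w|^2$ uniform when $|p|$ is close to $\la$. That situation falls under case (i), since there $R\ls\la^{1/2}$, so the thresholds ``$\ls$'' and ``$\gg$'' should be chosen with the same constant so that the two cases are exhaustive.
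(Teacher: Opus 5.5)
Your shell estimate and Case (i) are fine. Case (ii) has a genuine gap at step 1, where you assert that the radial width of the shell satisfies $\la/R\ls1$. In Case (ii) you only know $R\gg\la^{1/2}$, so $\la/R$ can be as large as a small multiple of $\la^{1/2}$.

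Concretely, take $\Omega$ to be the full region $\{x\in\la S^{d-1}:|\pi_Vx-p|\le C\}$ with $|p|^2=\la^2-\la^{3/2}$, so $R=\la^{3/4}$. Letting $\pi_Vx-p=t\,p/|p|$ with $|t|\le C$ gives $|w|^2=\la^{3/2}-2t|p|-t^2$. So $|w|$ ranges over an interval of length $\approx\la^{1/4}$, in every direction of $W$. Your $\Omega_\alpha$ are localized only in angle, so on them the coordinate $w\cdot\omega_\alpha$ varies by $\approx\la/R$. Hence the claimed bound $\operatorname{diam}(\pi_{V\oplus\R\omega_\alpha}\Omega_\alpha)\ls1$ fails whenever $R\ll\la$. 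The inequality $O(\la/R)+R\theta^2\ls1+R/\la$ silently replaces $\la/R$ by $1$. The sagitta $R\theta^2$, which you flagged as the delicate point, is harmless; the unbounded radial thickness is the real obstruction.

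To repair this you must cut each cap into $O(1+\la/R)$ slabs of unit length in the $\omega_\alpha$-direction. That forces a different cap size. With your angular radius $\la^{-1/2}$ you would get $\la^{(m-1)/2}(1+\la/R)$ pieces, which is too many (e.g.\ $\la^{(m-1)/2+1/4}$ when $R=\la^{3/4}$).

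The paper instead takes caps of Euclidean radius $\la^{1/2}$ on the sphere $\{w\in W:|w|=R\}$, i.e.\ angular radius $\la^{1/2}/R$.
There are $O((R/\la^{1/2})^{m-1})$ such caps. The component in $W\cap\omega_\alpha^\perp$ is still $O(\la^{1/2})$. The sagitta $|w|-w\cdot\omega_\alpha=|w_\perp|^2/(|w|+w\cdot\omega_\alpha)\ls\la/R$ is of the same order as the shell thickness, so the radial subdivision absorbs it. The total count is then
\[
\Bigl(\frac{R}{\la^{1/2}}\Bigr)^{m-1}\Bigl(1+\frac{\la}{R}\Bigr)\ls \la^{\frac{m-1}2}+R^{m-2}\la^{1-\frac{m-1}2}\ls\la^{\frac{m-1}2},
\]
using $R\ls\la$ and $m\ge2$. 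Your angular radius $\la^{-1/2}$ agrees with this choice only when $R\approx\la$, which is exactly where the shell has bounded thickness.
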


	\begin{proof}
		Write every point $x\in\Omega$ as
		\[
		x=p+v+w,
		\qquad v\in V,
		\qquad w\in W,
		\qquad |v|\le C.
		\]
		Since $V\perp W$ and $|x|=\lambda$, we have
		\begin{equation}\label{eq:shell-basic-equation}
			|w|^2=\lambda^2-|p+v|^2.
		\end{equation}
		Therefore,
		\[
		\bigl||p+v|^2-|p|^2\bigr|
		\le 2|p|\,|v|+|v|^2
		\ls_C \lambda.
		\]
		By definition,
		\[
		R^2=\max(\lambda^2-|p|^2,0),
		\]
		this implies the uniform shell estimate
		\begin{equation}\label{eq:shell-square-thickness}
			\bigl||w|^2-R^2\bigr|\ls\lambda.
		\end{equation}
		Indeed, if $|p|\le\lambda$, then
		$R^2=\lambda^2-|p|^2$ and the conclusion follows directly from
		\eqref{eq:shell-basic-equation}.  If $|p|>\lambda$, then $R=0$ and the same
		bound follows because $\lambda^2-|p|^2<0$ while
		$|w|^2=\lambda^2-|p+v|^2\ge0$ differs from
		$\lambda^2-|p|^2$ by $O(\lambda)$.
		
		If $R\ls\lambda^{1/2}$, then \eqref{eq:shell-square-thickness} gives
		$|w|^2\ls\lambda$, hence $|w|\ls\lambda^{1/2}$ for all $x\in\Omega$.
		Thus,
		\[
		\operatorname{diam}(\pi_W\Omega)
		\le 2\sup_{x\in\Omega}|\pi_Wx|
		\ls\lambda^{1/2}.
		\]
		This proves (i).
		
		We now assume $R\gg\lambda^{1/2}$.  From
		\eqref{eq:shell-square-thickness},
		\[
		\bigl||w|-R\bigr|
		=\frac{\bigl||w|^2-R^2\bigr|}{|w|+R}
		\ls \frac{\lambda}{R}.
		\]
		Thus, $\pi_W\Omega$ lies in a radial shell around the sphere
		$\{w\in W:|w|=R\}$ of thickness $O(1+\lambda/R)$.
		
		Put $q=m-1=\dim W-1$.  Choose a maximal $c\lambda^{1/2}$-separated set on
		the sphere $\{w\in W:|w|=R\}$, where $c>0$ is a small fixed constant. The
		corresponding spherical caps of tangential radius $O(\lambda^{1/2})$ cover
		the sphere, and the number of such caps is
		$	O((\frac{R}{\lambda^{1/2}})^q),$
		because $R\gg\lambda^{1/2}$.  Let $R\omega$ be the center of one cap
		with $\omega\in W$ and $|\omega|=1$.
		
		Consider the portion of the shell that lies over this cap.  For such a point
		$w$, write
		\[
		w=(\omega\cdot w)\omega+w_\perp,
		\qquad w_\perp\in W\cap\omega^\perp.
		\]
		The angular localization gives
		\[
		|w_\perp|\ls\lambda^{1/2}.
		\]
		We will then estimate the range of the scalar coordinate $\omega\cdot w$.  Since
		$w$ is in a small cap, $\omega\cdot w$ is comparable to $R$, and
		\[
		|w|-\omega\cdot w
		=\frac{|w_\perp|^2}{|w|+\omega\cdot w}
		\ls \frac{\lambda}{R}.
		\]
		The radial variable $|w|$ itself varies over an interval of length
		$O(1+\lambda/R)$.  Hence $\omega\cdot w$ also varies over an interval of
		length $O(1+\lambda/R)$
		on this cap.  We partition that interval into subintervals of bounded
		length.  This introduces $O(1+\lambda/R)$ many subpieces for each angular cap.
		
		On each resulting piece $\Omega_\alpha$, the $V$-projection has bounded
		diameter as $|\pi_Vx-p|\le C$, and the $\omega$-coordinate has bounded
		diameter by construction. Therefore,
		\[
		\operatorname{diam}(\pi_{V\oplus\R\omega}\Omega_\alpha)\ls1.
		\]
		At the same time the transverse component in $W\cap\omega^\perp$ has
		diameter $O(\lambda^{1/2})$, so
		\[
		\operatorname{diam}(\pi_{W\cap\omega^\perp}\Omega_\alpha)
		\ls\lambda^{1/2}.
		\]
		See Figure \ref{fig1}.
		\begin{figure}[htbp]
			\centering
			\includegraphics[width=0.6\textwidth]{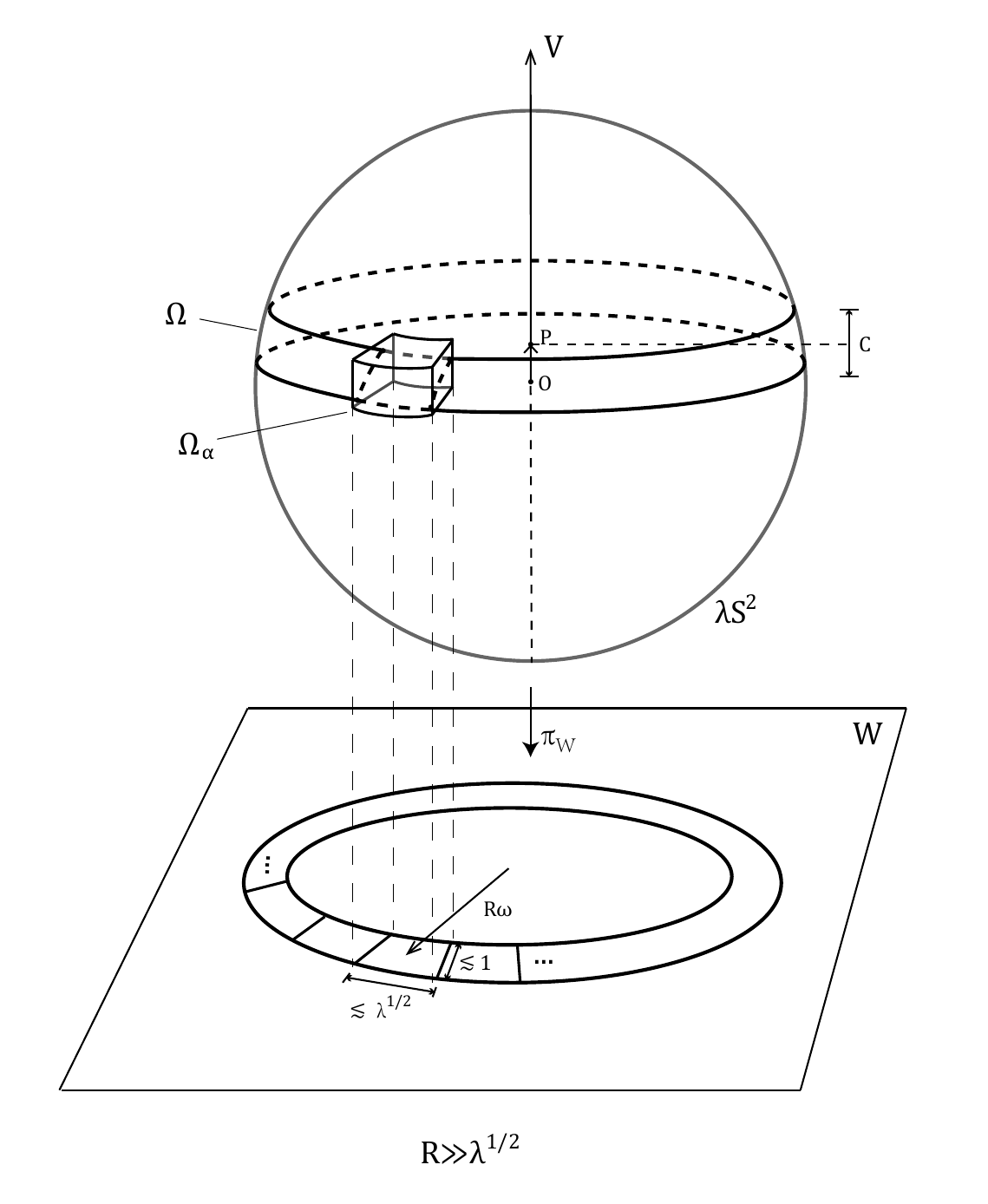}
			\caption{Lemma \ref{lem:shell} (ii)}
			\label{fig1}
		\end{figure}
		
		It remains only to count the pieces.  Since $R\le\lambda+C\ls\lambda$,
		we have
		\[
		\Big(\frac{R}{\lambda^{1/2}}\Big)^q
		\Big(1+\frac{\lambda}{R}\Big)
		\ls
		\lambda^{q/2}.
		\]
		Indeed, the first term is at most $\lambda^{q/2}$, and the second is
		\[
		\Big(\frac{R}{\lambda^{1/2}}\Big)^q\frac{\lambda}{R}
		=R^{q-1}\lambda^{1-q/2}
		\ls\lambda^{q/2}.
		\]
		Thus $\Omega$ is covered by $O(\lambda^{q/2})=O(\lambda^{(m-1)/2})$
		pieces with the asserted projection bounds.  This proves (ii).
	\end{proof}
	
	\begin{proposition}\label{prop:geometric-global}
		Let \(d\ge3\) and \(m=d-k\ge2\).  Then, for every \(\eps>0\),
		\[
		A_{k,d,\la}\ls_\eps\la^{\beta(m,d)+\eps},
		\]
		with 
		\begin{equation}\label{eq:Egeom}
			\beta(m,d)=
			\min\left\{\frac{(m-1)(d-1)}d,\ m-\frac32+2^{-m}\right\}.
		\end{equation}
	\end{proposition}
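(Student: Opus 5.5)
Fix a \(\nu\)-transverse \(k\)-tuple of bands and let \(\Omega=\Z^d\cap\la S^{d-1}\cap\bigcap_j B(u_j,x_{0,j})\). Set \(V=\spanop(n_1,\ldots,n_k)\), \(W=V^\perp\), so \(\dim W=m\ge2\). By Lemma~\ref{lem:transverse-slab} there is \(p\in V\) with \(|\pi_Vx-p|\le C\) for all \(x\in\Omega\). This is exactly the hypothesis of Lemma~\ref{lem:shell}. The plan is to cut \(\Omega\) into a controlled number of admissible bricks (Definition~\ref{def:T}) with \(q=m-1\) and \(L\approx\la^{1/2}\). Each brick is then bounded by Proposition~\ref{prop:fibration} or Proposition~\ref{prop:local}, whichever is better, and we take the minimum.

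\textbf{Case (ii) of Lemma~\ref{lem:shell}} (\(R\gg\la^{1/2}\)). Here \(\Omega\) is covered by \(O(\la^{(m-1)/2})\) pieces \(\Omega_\alpha\). Each piece has diameter \(O(1)\) in \(V\oplus\R\omega_\alpha\) and diameter \(O(\la^{1/2})\) in \(U_\alpha:=W\cap\omega_\alpha^\perp\), where \(\dim U_\alpha=m-1\). Thus each \(\Omega_\alpha\) is admissible for \(T_{d,m-1}(\la,c\la^{1/2};C')\) with a fixed \(C'\). The constant \(c\) can be absorbed by splitting each piece into \(O(1)\) subpieces, so that \(L\le\la^{1/2}\) holds as Definition~\ref{def:T} requires. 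Proposition~\ref{prop:fibration} gives a per-brick bound of \(\la^{\frac{(m-1)(d-2)}{2d}+\eps}\), and hence a total of
\[
\la^{\frac{m-1}2+\frac{(m-1)(d-2)}{2d}+\eps}=\la^{\frac{(m-1)(d-1)}d+\eps}.
\]
Proposition~\ref{prop:local} gives a per-brick bound of \(\la^{\frac12(m-2+2^{-(m-1)})}\), and hence a total of
\[
\la^{\frac{m-1}2+\frac{m-2}2+2^{-m}}=\la^{m-\frac32+2^{-m}}.
\]
The minimum of the two totals is \(\beta(m,d)\).

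\textbf{Case (i) of Lemma~\ref{lem:shell}} (\(R\ls\la^{1/2}\)). Now \(\pi_W\Omega\) has diameter \(O(\la^{1/2})\) in all \(m\) directions, while \(\pi_V\Omega\) has bounded diameter. A brick of this shape is naturally admissible for \(q=m\), not \(q=m-1\). Applying Proposition~\ref{prop:fibration} directly gives \(\la^{m(d-2)/(2d)}\), which is at most \(\la^{(m-1)(d-1)/d}\) exactly when \(m(d-2)\le2(m-1)(d-1)\), i.e.\ \(md\ge 2d-2\); this holds for \(m\ge2\). Applying Proposition~\ref{prop:local} directly gives \(\la^{(m-1+2^{-m})/2}\), which is at most \(\la^{m-3/2+2^{-m}}\) when \(m\ge2\). (For \(m=2\) the two sides are equal, and for larger \(m\) the comparison improves.) So case (i) is dominated by the case (ii) bounds, with no extra slicing needed. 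For the second bound one can alternatively slice \(W\) into \(O(\la^{1/2})\) slabs of unit width in a fixed direction of \(W\), reducing to \(q=m-1\); this reproduces the same count as case (ii). To stay within \(L\le\la^{1/2}\), rescale the diameter constant by subdividing into \(O(1)\) pieces as before.

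The \(\la^\eps\) loss comes only from Proposition~\ref{prop:fibration}. The constants depend only on \(d,\nu,C_0\) through Lemma~\ref{lem:transverse-slab} and the \(C'\) of the bricks. Taking the supremum over transverse tuples then yields \(A_{k,d,\la}\ls_\eps\la^{\beta(m,d)+\eps}\).

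The main obstacle is the bookkeeping in case (ii). One must verify that each \(\Omega_\alpha\) really is admissible with a uniform constant \(C'\) with respect to the decomposition \(U_\alpha\oplus(V\oplus\R\omega_\alpha)\). One must also check that the piece count \(O(\la^{(m-1)/2})\) from Lemma~\ref{lem:shell} combines with the two per-brick exponents to give exactly the two entries of the minimum in \eqref{eq:Egeom}. Both are direct from the stated lemmas, so I expect no serious difficulty, just care with the exponent arithmetic.
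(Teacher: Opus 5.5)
Your proposal is correct and follows the paper's proof essentially step for step: Lemma~\ref{lem:transverse-slab} feeds Lemma~\ref{lem:shell}, and case (ii) yields $O(\la^{(m-1)/2})$ bricks admissible for $T_{d,m-1}(\la,\la^{1/2};C)$, each bounded by the better of Propositions~\ref{prop:fibration} and~\ref{prop:local}; case (i) is a single $q=m$ brick whose bound is dominated by $\beta(m,d)$. One harmless slip: for $m=2$ the case (i) exponent from Proposition~\ref{prop:local} is $5/8$, which is strictly less than $3/4$ rather than equal to it, since the inequality $0\le m-2+2^{-m}$ is strict there. Your conclusion is unaffected.
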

	
	\begin{proof}
		Let $P\subset \Z^d\cap\lambda S^{d-1}$ be as before. Again, put
		\(V=\spanop(n_1,\ldots,n_k)\), \(W=V^\perp\), \(m=d-k\).  If
		\(P\ne\varnothing\), choose \(x_0\in P\), and set \(p=\pi_Vx_0\).
		Lemma~\ref{lem:transverse-slab} gives \(|\pi_Vx-p|\ls1\) for all \(x\in P\), so
		we may apply Lemma~\ref{lem:shell}. Propositions \ref{prop:fibration} and \ref{prop:local} gives
		\[T_{d,q}(\la,\la^{1/2};C)\ls \la^{\gamma(q,d)+\eps}\]
		where 
		\[\gamma(q,d)=\min\left\{\frac{q(d-2)}d,\ q-1+2^{-q}\right\}.\]
		
		In case (i), \(P\) is admissible for
		\(T_{d,m}(\la,\la^{1/2};C)\), hence
		\[
		\#P\ls_\eps \la^{\gamma(m,d)/2+\eps}.
		\]
		This exponent is no larger than both exponents in
		\eqref{eq:Egeom}.
		
		In case (ii), \(P\) is covered by \(O(\la^{\frac{m-1}2})\) patches that are admissible for
		\(T_{d,m-1}(\la,\la^{1/2};C)\). Therefore,
		\[
		\#P\ls_\eps \la^{\frac{m-1}2}\la^{\gamma(m-1,d)/2}\la^\eps
		=\la^{\beta(m,d)+\eps}.
		\]
	\end{proof}
	
	\begin{lemma}\label{lem:end-tube}
		Let \(B_1,\ldots,B_{d-1}\) be the transverse bands as in
		Lemma~\ref{lem:transverse-slab}.  Then
		\[
		B_1\cap\cdots\cap B_{d-1}
		\]
		is contained in a fixed-radius tube around some line \(\ell\). Moreover, the part of this tube lying on \(\la S^{d-1}\) can be divided into at
		most two pieces.  On each piece, the projection to the direction of \(\ell\) has
		diameter \(O(\la^{1/2})\).
	\end{lemma}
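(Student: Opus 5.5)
We apply Lemma~\ref{lem:transverse-slab} with $k=d-1$. The subspace $V=\spanop(n_1,\ldots,n_{d-1})$ then has dimension $d-1$, and $W=V^\perp$ is a line spanned by a unit vector $\omega$. The lemma produces $p\in V$ with $|\pi_Vx-p|\le C$ for every $x\in B_1\cap\cdots\cap B_{d-1}$. Put $\ell=p+\R\omega$. Since $x=\pi_Vx+(\omega\cdot x)\omega$, the distance from $x$ to $\ell$ equals $|\pi_Vx-p|\le C$, so the intersection lies in the tube of radius $C=C(d,\nu,C_0)$ around $\ell$. This settles the first claim.

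For the second claim we parametrize points of the tube on the sphere by $x=p+v+s\omega$, where $v\in V$, $|v|\le C$ and $s\in\R$. The condition $|x|=\la$ becomes $s^2=\la^2-|p+v|^2$. As in the proof of Lemma~\ref{lem:shell}, $\bigl||p+v|^2-|p|^2\bigr|\le 2|p||v|+|v|^2\ls\la$, where we may assume $|p|\le\la+C$ since otherwise the set is empty. Setting $R^2=\max(\la^2-|p|^2,0)$, this gives the one-dimensional shell estimate $|s^2-R^2|\ls\la$. We split according to the sign of $s$ and take the two pieces $\{s\ge0\}$ and $\{s<0\}$. On each piece we must show that $s$ ranges over an interval of length $O(\la^{1/2})$, which is the diameter of the projection to the direction of $\ell$.

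If $R\ls\la^{1/2}$, then $s^2\ls\la$, so $|s|\ls\la^{1/2}$ on the whole set. In this case a single piece would already do. If $R\gg\la^{1/2}$, then on the piece $s\ge0$ we have $|s-R|=|s^2-R^2|/(s+R)\ls\la/R\ls\la^{1/2}$, and symmetrically $|s+R|\ls\la^{1/2}$ on the piece $s<0$. In fact this is $O(1+\la/R)$, which is even better when $R\approx\la$. So each piece projects onto an interval of length $O(\la^{1/2})$ in the $\omega$-direction.

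The only real step is the shell estimate $|s^2-R^2|\ls\la$ and its conversion into a bound on $s$. The conversion is uniform in both regimes $R\ls\la^{1/2}$ and $R\gg\la^{1/2}$ through the elementary inequality $\min(|s-R|,|s+R|)\le |s^2-R^2|^{1/2}\ls\la^{1/2}$. Applied on each sign piece, this gives the claim directly, so I expect no serious obstacle. One only has to make sure that all constants depend on $d,\nu,C_0$ alone.
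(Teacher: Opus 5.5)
Your proposal is correct and follows essentially the paper's proof. Both get the tube from Lemma~\ref{lem:transverse-slab}, write points as $x=p+v+s\omega$ with $s^2=\lambda^2-|p+v|^2$ varying by $O(\lambda)$, and split by the sign of $s$. The only cosmetic difference is in the last step: you compare $s$ to $R$ using the case split from Lemma~\ref{lem:shell}, while the paper applies $|t_1-t_2|\le|t_1^2-t_2^2|^{1/2}$ directly to same-sign $t_1,t_2$, which is the same estimate.
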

	
	\begin{proof}
		Let \(n_1,\ldots,n_{d-1}\) be the unit normals of the bands, and put
		\( V=\spanop(n_1,\ldots,n_{d-1}) \).
		Then by Lemma~\ref{lem:transverse-slab}, the band intersection is contained in a fixed-radius tube around a line \(\ell\) parallel to \(V^\perp\).  
		
		It remains to describe the intersection of this tube with the sphere.  Write
		\[
		\ell=p+\R v,
		\qquad |v|=1,
		\qquad p\perp v.
		\]
		Every point in the tube can be written as
		\[
		x=p+tv+y,
		\qquad y\perp v,
		\qquad |y|\le C,
		\]
		where \(C\) is fixed.  If also \(|x|=\la\), then
		\(
		t^2+|p+y|^2=\la^2.
		\)
		If the intersection is nonempty, then \(|p|\le \la+C\).  Hence, as \(y\) ranges
		over the bounded set \(|y|\le C\), the quantity \(|p+y|^2\) varies by
		\(O(\la)\).  Therefore \(t^2\) also varies by \(O(\la)\).
		
		Now split the spherical part of the tube into the two pieces \(t\ge0\) and
		\(t\le0\).  On either piece,
		\[
		|t_1-t_2|
		\le
		|t_1^2-t_2^2|^{1/2}
		\ls \la^{1/2}.
		\]
		Thus the projection to the direction of \(\ell\) has diameter
		\(O(\la^{1/2})\) on each of the two pieces.
	\end{proof}
	
	\begin{proposition}\label{prop:endpoint-bound}
		For \(d\ge3\) and every \(\eps>0\),
		\[
		A_{d-1,d,\la}\ls_\eps\la^{\beta(1,d)+\eps},
		\]
		with 
		\begin{equation}\label{eq:endpoint-exponent}
			\beta(1,d)=\min\left\{\frac{d-2}{2d},\frac14\right\}.
		\end{equation}
	\end{proposition}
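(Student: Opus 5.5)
The bound follows by combining Lemma~\ref{lem:end-tube} with the two estimates for admissible bricks, Propositions~\ref{prop:fibration} and \ref{prop:local}, in the case $q=1$.

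Let $P\subset \Z^d\cap\la S^{d-1}\cap\bigcap_{j=1}^{d-1}B(u_j,x_{0,j})$ with $\nu$-transverse normals $n_1,\dots,n_{d-1}$. By Lemma~\ref{lem:end-tube}, $P$ lies in a fixed-radius tube around a line $\ell=p+\R v$, with $v$ spanning $V^\perp$ for $V=\spanop(n_1,\ldots,n_{d-1})$. Moreover, $P$ splits into at most two pieces $P_\pm$ (according to the sign of $t=v\cdot x$). On each piece the $v$-coordinate has diameter $O(\la^{1/2})$.

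Take the orthogonal decomposition $\R^d=U\oplus U^\perp$ with $U=\R v$, so $q=1$. By Lemma~\ref{lem:transverse-slab}, $\operatorname{diam}(\pi_{U^\perp}P_\pm)=\operatorname{diam}(\pi_V P_\pm)\le C$ with $C=C(d,\nu,C_0)$. We also have $\operatorname{diam}(\pi_U P_\pm)\le C'\la^{1/2}$. After enlarging the tube constant or subdividing the $t$-range into $O(1)$ intervals of length at most $\la^{1/2}$, each piece is admissible for $T_{d,1}(\la,\la^{1/2};C)$ in the sense of Definition~\ref{def:T}. The normalization $L\le \la^{1/2}$ is harmless, since only $O(1)$ subdivisions are needed. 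Hence
\[
A_{d-1,d,\la}\ls T_{d,1}(\la,\la^{1/2};C).
\]

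It remains to insert the two bounds with $q=1$ and $L=\la^{1/2}$:
\begin{itemize}
\item Proposition~\ref{prop:fibration} gives $T_{d,1}\ls_\eps L^{(d-2)/d}\la^\eps=\la^{\frac{d-2}{2d}+\eps}$.
\item Proposition~\ref{prop:local} gives $T_{d,1}\ls L^{1-1+1/2}=L^{1/2}=\la^{1/4}$.
\end{itemize}
Taking the minimum yields $A_{d-1,d,\la}\ls_\eps \la^{\min\{\frac{d-2}{2d},\frac14\}+\eps}$, which is \eqref{eq:endpoint-exponent}. Note that the first term is the smaller one exactly when $d\le 3$, which recovers $\frac16$ for $d=3$.

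There is no real obstacle here. The only point needing care is the bookkeeping in checking admissibility: the tube radius and the $\la^{1/2}$ diameter constants must depend only on $d,\nu,C_0$, and $\operatorname{diam}(\pi_UP)\le L$ must be arranged with $L\le\la^{1/2}$ by the $O(1)$ subdivision. Both are supplied directly by Lemmas~\ref{lem:transverse-slab} and \ref{lem:end-tube}.
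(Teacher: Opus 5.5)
Your proposal is correct and follows the paper's proof: Lemma~\ref{lem:end-tube} covers the band intersection by two admissible sets for $T_{d,1}(\la,\la^{1/2};C)$, and the exponent is the minimum of the $q=1$ cases of Propositions~\ref{prop:fibration} and \ref{prop:local}. You also subdivide the $O(\la^{1/2})$ range into $O(1)$ pieces so that $\operatorname{diam}(\pi_U P)\le L=\la^{1/2}$; this is a small bookkeeping step that the paper leaves implicit.
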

	
	\begin{proof}
		By Lemma~\ref{lem:end-tube}, the set is covered by two admissible sets for
		\(T_{d,1}(\la,\la^{1/2};C)\).  Proposition~\ref{prop:local} gives
		\[
		T_{d,1}(\la,\la^{1/2};C)
		\ls_\eps (\la^{1/2})^{\min\{(d-2)/d,1/2\}}\la^\eps
		=\la^{\beta(1,d)+\eps},
		\]
		which is the asserted exponent.
	\end{proof}
	
	\section{Discrete spherical multiplier}\label{sec4}
	
	In this section, we use an approximation of the discrete spherical multiplier to establish new restriction estimates for eigenfunctions. 
	
	Let $n=\la^2\in \mathbb{N}$. Let $\E_\la=\{\xi\in \mathbb{Z}^d:|\xi|=\la\}$. We write the discrete spherical multiplier
	\[
	S_n(x)=\sum_{\xi\in \E_\la}e^{2\pi i x\cdot \xi}
	\]
	We shall use the approximation of the discrete spherical multiplier $S_n(x)$ from Magyar-Stein-Wainger \cite[Section 3]{msw} and Magyar \cite[Lemma 1]{m}, involving Kloosterman sums from analytic number theory. For any integer $q\ge1$, write $q=2^h q_1$ for some integer $h\ge0$ and odd integer $q_1\ge1$. Define
	\[
	\rho(q,n)=2^{h}(q_1,n),
	\]
	where $(q_1,n)$ denotes the greatest common divisor of $q_1$ and $n$.
	Let \(\sigma\) denote normalized Euclidean surface measure on
	\(S^{d-1}\). Its Fourier transform has the decay estimate
	\begin{equation}
		|\hat\sigma(x)|\ls (1+|x|)^{-\frac{d-1}2}.
	\end{equation}
	We use the dimensional constant
	\begin{equation}\label{eq:Cd-definition}
		C_d=\frac12|S^{d-1}|=\frac{\pi^{d/2}}{\Gamma(d/2)}.
	\end{equation}

	\begin{proposition}[Magyar-Stein-Wainger \cite{msw} and Magyar \cite{m}]\label{propm}
		Let \(d\ge4\) and \(n=\la^2\in\N\).  Then
		the discrete spherical multiplier admits the decomposition
		\begin{equation}
			S_n(x)=C_d\la^{d-2}M_n(x)+E_n(x),
		\end{equation}
		where
		\begin{equation}
			M_n(x)=
			\sum_{1\le q\le\la}\sum_{m\in\Z^d}
			K_q(m,n)\psi(qx-m)\widehat\sigma(\la(x-m/q)).
		\end{equation}
		Here
		\begin{equation}
			K_q(m,n)=q^{-d}
			\sum_{a\in(\Z/q\Z)^\times}
			\sum_{r\in(\Z/q\Z)^d}e^{2\pi i(a(|r|^2-n)+m\cdot r)/q}.
		\end{equation}
		For every \(\eps>0\),
		\begin{equation}\label{Ksum}	|K_q(m,n)|\ls_{\eps}q^{-\frac{d-1}2+\eps}\rho(q,n)^{\frac12}
		\end{equation}
		and
		\begin{equation}
			|E_n(x)|\ls_{\eps} \la^{\frac{d-1}2+\eps}.
		\end{equation}
		Here 
		\(\psi\in C_0^\infty(\R^d)\) is supported on \(\max_j|x_j|\le1/4\) and equal to 1 on \(\max_j|x_j|\le1/8\).
	\end{proposition}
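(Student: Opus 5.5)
This statement is the Magyar--Stein--Wainger/Magyar approximation, and I would follow the circle method. The plan is to write $S_n(x)$ as an integral over a parameter $t\in[0,1]$ of a Gauss-type sum, split $[0,1]$ into major and minor arcs, and identify the major-arc contribution with $C_d\la^{d-2}M_n(x)$. Concretely, set $N=n=\la^2$ and, for $\tau>0$ a small damping parameter, use the identity
\[
S_n(x)=e^{2\pi n\tau}\int_0^1\sum_{\xi\in\Z^d}e^{-2\pi(\tau-it)|\xi|^2}e^{2\pi i\xi\cdot x}e^{-2\pi i tn}\,dt ,
\]
with $\tau=1/n$. This gives a theta function $\Theta(\tau-it,x)$ integrated against $e^{-2\pi itn}$. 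Using Dirichlet's theorem, I would divide $[0,1]$ into Farey arcs around $a/q$ with $q\le\la$ and $|t-a/q|\lesssim 1/(q\la)$, so the arcs are built at level $\la=n^{1/2}$. This Kloosterman refinement is what produces the $(d-1)/2$ error exponent rather than the cruder $d/2$.

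On each arc I would write $\xi=r+q\ell$ with $r$ ranging over residues mod $q$, and apply Poisson summation in $\ell$. This factors the sum into a complete exponential sum
\[
q^{-d}\sum_{r}e^{2\pi i(a|r|^2+m\cdot r)/q},
\]
times a continuous Gaussian integral evaluated at frequency $x-m/q$. Summing over $a\in(\Z/q\Z)^\times$ together with the $e^{-2\pi ian/q}$ factor yields exactly $K_q(m,n)$. The $t$-integral of the continuous Gaussian over the arc, extended to all of $\R$, reproduces the singular integral $C_d\la^{d-2}\widehat\sigma(\la(x-m/q))$; the constant $C_d=\tfrac12|S^{d-1}|$ arises from the Jacobian of $|\xi|^2$ near the sphere. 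The smooth cutoff $\psi(qx-m)$ is inserted to localize $x$ near $m/q$. The terms with $m$ far from $qx$, and the tails from extending the $t$-integral, are absorbed into $E_n$ using the rapid decay of the Gaussian outside the cutoff.

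For the Kloosterman bound \eqref{Ksum}, I would first reduce by multiplicativity in $q$ (CRT) to prime powers. After completing the square, the Gauss sum in $r$ has modulus $q^{-d/2}$, up to a factor $2^{h/2}$ at the prime $2$. The remaining sum over $a$ is then a Salié/Kloosterman sum $\sum_a \bar\chi(a)e((\bar a\,c - an)/q)$ with $c\equiv -\overline{4}|m|^2$. Weil's bound, together with its prime-power analogues, gives $q^{1/2+\eps}(q_1,n)^{1/2}$ for it. Combining the two yields $q^{-(d-1)/2+\eps}\rho(q,n)^{1/2}$.

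The main obstacle is the error term $E_n$, which must be bounded by $\la^{(d-1)/2+\eps}$ uniformly in $x$. A pointwise bound on each arc is too weak here. Instead I would follow Kloosterman's method: do not bound the arcs individually, but sum over $a$ inside each arc first, using incomplete Kloosterman sums with the boundaries of the Farey arcs handled via Fourier expansion of their indicator functions. This exploits cancellation in $a$, and it is what saves $q^{1/2}$ over the trivial estimate. I would then sum over $q\le\la$, using $\sum_{q\le\la}\rho(q,n)^{1/2}q^{-1/2}\lesssim_\eps\la^{1/2+\eps}$ via the divisor bound on $(q_1,n)$. Combined with the Gaussian factor of size $\la^{d-2}\cdot\la^{-(d-3)/2}$ on the minor-arc and tail pieces, this gives the claimed $\la^{(d-1)/2+\eps}$. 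The condition $d\ge4$ is used to make the sums over $q$ and $m$ converge absolutely.
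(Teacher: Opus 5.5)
The paper gives no argument of its own for this proposition. It imports the decomposition from Magyar--Stein--Wainger and Magyar, and remarks only that \eqref{Ksum} comes from Weil's bound. Your outline is the Kloosterman-refined circle method used in those sources, so the route is the same: the theta integral with $\tau=1/n$, the Farey dissection of order $\lambda$, Poisson summation over residues mod $q$, the singular integral producing $C_d\lambda^{d-2}\widehat\sigma(\lambda(x-m/q))$, completion of the square leading to Kloosterman/Sali\'e sums, and averaging in $a$. Your twist $\bar\chi(a)=\bigl(\frac aq\bigr)^d$ correctly makes the Kloosterman/Sali\'e dichotomy depend on the parity of $d$; the paper's remark says $n$. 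However, the error analysis, which you rightly call the crux, is wrong as written in two places.

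\textbf{Off-center terms and tails.} You cannot discard the dual terms with $\|qx-m\|_\infty\ge 1/8$ (and the $1-\psi$ pieces) by Gaussian decay. At $t=a/q+\beta$ the $m$-th term carries $|2(\tau-i\beta)|^{-d/2}\exp\bigl(-\tfrac{\pi}{2}|qx-m|^2\,\tau/(q^2(\tau^2+\beta^2))\bigr)$. For $\tau=\lambda^{-2}$ and $|\beta|\le\tau$, the coefficient $\tau/(q^2(\tau^2+\beta^2))$ is $\approx\lambda^2/q^2$, which is $O(1)$ once $q\sim\lambda$. So after the $\beta$-integration each such term is only bounded by $\lambda^{d-2}q^{-d/2}$, and summing this trivially over $a$ and over $q\sim\lambda$ gives $\lambda^{d/2}$, which exceeds the target. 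These terms must also go through the averaging in $a$. For fixed $q,m,\beta$, the sum over those $a$ whose Farey arc contains $a/q+\beta$, normalized by $q^{-d}$, is an incomplete version of $K_q(m,n)$. After completion it obeys the same bound $q^{-(d-1)/2+\eps}\rho(q,n)^{1/2}$, up to a logarithm. Likewise, the $\beta$-tails of the central term decay only like $|\beta|^{-d/2}$, not rapidly.

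\textbf{Final count.} Your numerology does not produce the exponent. As stated, the factor $\lambda^{d-2}\lambda^{-(d-3)/2}=\lambda^{(d-1)/2}$ times $\sum_{q\le\lambda}q^{-1/2}\rho(q,n)^{1/2}\approx\lambda^{1/2}$ is $\lambda^{d/2+\eps}$, no better than the unrefined method. The correct per-$q$ factor is $(q\lambda)^{(d-2)/2}$. For the tails this is $\int_{|\beta|\gtrsim(q\lambda)^{-1}}|\beta|^{-d/2}\,d\beta$. It also bounds the $\beta$-integral of the off-center terms, where the range $|\beta|\le\tau$ contributes $\lambda^{d-2}e^{-c\lambda^2/q^2}\lesssim(q\lambda)^{(d-2)/2}$. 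Multiplying by the Kloosterman bound gives $q^{-1/2+\eps}\rho(q,n)^{1/2}\lambda^{(d-2)/2}$ per $q$. Only then does Lemma~\ref{lem:rho-absorption} yield $\lambda^{(d-1)/2+\eps}$.

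\textbf{Smaller inaccuracies.} At the prime $2$ you cannot complete the square. The $d$-dimensional Gauss sum mod $2^h$ is $O_d(2^{hd/2})$, not off by $2^{h/2}$, and the $a$-sum mod $2^h$ is bounded trivially by $2^h$. That trivial bound is exactly where the factor $2^h$ in $\rho(q,n)$ comes from. Also, $d\ge4$ is not what makes the $q$- and $m$-sums in $M_n$ converge. The $q$-sum is finite, and for fixed $x,q$ at most one $m$ meets the support of $\psi(qx-\cdot)$.
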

	The estimate \eqref{Ksum} is obtained from Weil's bound \cite[Chapter 4]{s1990} for a Kloosterman sum or Sali\'e sum, depending on whether $n$ is even or odd. Thus, for $s=\frac{d-1}2$ and every \(\eps>0\),
	\begin{equation}\label{sn}
		|S_n(x)|
		\ls_{\eps}\la^{s+\eps}+\la^{d-2}
		\sum_{1\le q\le\la}q^{-s+\eps}\rho(q,n)^{1/2}
		\sum_{m\in\Z^d}
		|\psi(qx-m)|
		\bigl(1+\la|x-m/q|\bigr)^{-s}.
	\end{equation}

	\subsection{Proof of restriction estimates}
	Let $\Sigma\subset \mathbb{T}^d$ be a compact smooth $k$-dimensional submanifold. By a finite partition of unity, it is enough to work on a coordinate patch. Thus, let 
	\[\Gamma: U\subset \mathbb{R}^k\to \mathbb{R}^d\]
	be a smooth lift of a patch of $\Sigma$. After shrinking the patch, we may assume that $\Gamma$ is bi-Lipschitz:
	\begin{equation}\label{bl}
		|\Gamma(u)-\Gamma(v)|\approx |u-v|,\ \ u,v\in U.
	\end{equation}
	We define $T_\la:\ell^2(\E_\la)\to L^2(U)$ as
	\[(T_\la a)(u)=\sum_{\xi\in \E_\la}a_\xi e^{2\pi i\xi\cdot \Gamma(u)}.\]
	By a $TT^*$ argument and Schur's test, it suffices to estimate the integral
	\begin{equation}\label{schur}
		\sup_{u\in U}\int_U|S_n(\Gamma(u)-\Gamma(v))|dv.
	\end{equation}
	Then the norm of $T_\la$ will be bounded by the square root of this integral estimate.

	

	\begin{lemma}\label{lem:packing}
		For \(1\le q\le\la\), let
		\[
		I_q(u)=\int_{U}
		\sum_{m\in\Z^d}
		|\psi(q(\Gamma(u)-\Gamma(v))-m)|
		\bigl(1+\la|\Gamma(u)-\Gamma(v)-m/q|\bigr)^{-s}d v.
		\]
		Then we have
		\begin{equation}
			I_q(u)
			\ls
			\begin{cases}
				q^k\la^{-k},&s>k,\\
				q^k\la^{-k}\log(2\la/q),&s=k,\\
				q^s\la^{-s},&0<s<k.
			\end{cases}
		\end{equation}
	\end{lemma}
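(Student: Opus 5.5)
The approach is to fix $u$, decompose the integrand according to which lattice point $m$ is active, and bound each active $m$ by a weighted volume integral on $U\subset\R^k$. Because $\psi$ is supported in the cube $\max_j|y_j|\le 1/4$, for each $v$ at most one $m\in\Z^d$ gives a nonzero term $\psi(q(\Gamma(u)-\Gamma(v))-m)$. Equivalently, $\Gamma(u)-\Gamma(v)$ lies in the cube of side $1/(2q)$ centered at $m/q$.

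Put $y=\Gamma(u)-\Gamma(v)$. The set of $v$ contributing for a given $m$ is contained in
\[
V_m=\{v\in U: |y-m/q|\le \sqrt d/(4q)\}.
\]
On $V_m$ the weight is $(1+\la|y-m/q|)^{-s}$. Hence
\[
I_q(u)\ls \sum_m \int_{V_m}\bigl(1+\la\,\dist(\Gamma(v),\Gamma(u)-m/q)\bigr)^{-s}\,dv .
\]

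\textbf{Step 1: a single cell.} Fix $m$ with $V_m\neq\varnothing$ and choose $v_m\in V_m$ minimizing $|y-m/q|$. Set $\delta_m=|\Gamma(u)-\Gamma(v_m)-m/q|\le \sqrt d/(4q)$. By the bi-Lipschitz property \eqref{bl}, for $v\in V_m$ we have
\[
|\Gamma(u)-\Gamma(v)-m/q|\ \ge\ \max\bigl(\delta_m,\ c|v-v_m|-\delta_m\bigr),
\]
so the weight is at least a constant times $(1+\la\delta_m+\la|v-v_m|)^{-s}$ up to harmless constants. Also $V_m$ has diameter $\ls 1/q$ by \eqref{bl}. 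Integrating in polar coordinates around $v_m$ over radii $r\le C/q$ gives
\[
\int_0^{C/q}(1+\la r)^{-s}r^{k-1}\,dr\ \approx\ \la^{-k}\int_0^{C\la/q}(1+t)^{-s}t^{k-1}\,dt .
\]
This equals $\la^{-k}$ if $s>k$, $\la^{-k}\log(2\la/q)$ if $s=k$, and $\la^{-k}(\la/q)^{k-s}=\la^{-s}q^{s-k}$ if $s<k$. Using $\la\delta_m$ instead gives the alternative bound $(1+\la\delta_m)^{-s}q^{-k}$, which will be needed for far cells.

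\textbf{Step 2: summing over cells (the packing).} Only $m$ with $m/q\in \Gamma(u)-\Gamma(U)+O(1/q)$ can contribute; these lie in a bounded set. The sets $V_m$ are essentially disjoint pieces of $U$ of diameter $\ls 1/q$. Since the cubes for distinct $m$ are disjoint, and $\Gamma$ is bi-Lipschitz, the number of active $m$ is $\ls q^k$. The cells near $v=u$ (those with $m$ close to $0$) are not special, since generally $\delta_m$ is not small. For the cases $s\ge k$, I will simply multiply the single-cell bound by $q^k$, which gives $q^k\la^{-k}$ and $q^k\la^{-k}\log(2\la/q)$.

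\textbf{Step 3: the case $s<k$.} Here the crude count gives $q^k\cdot \la^{-s}q^{s-k}=q^s\la^{-s}$, exactly the claimed bound. So crude counting suffices in all three cases.

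The main obstacle I anticipate is justifying the count of active cells, $\#\{m\}\ls q^k$, together with $\operatorname{diam}V_m\ls 1/q$. The second follows directly from \eqref{bl} once the patch $U$ is small (diameter $<1/4$), because then $V_m$ is the preimage of a set of diameter $\ls 1/q$. The first follows by covering $U$ by $\ls q^k$ balls of radius $c/q$: each such ball maps under $v\mapsto\Gamma(u)-\Gamma(v)$ into a set of diameter $<1/(2q)\cdot$const, which meets only $O(1)$ cubes $m/q+[-1/(4q),1/(4q)]^d$.
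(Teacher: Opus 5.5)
Your proposal is correct and follows essentially the same argument as the paper. There are $O(q^k)$ active lattice points $m$, and each active cell lies in a parameter ball of radius $O(1/q)$ on which, by \eqref{bl}, the weight is $\lesssim (1+\lambda|v-v_m|)^{-s}$; multiplying the resulting radial integral by $q^k$ gives all three cases. Two minor points: in Step 1 the weight is \emph{at most} (not at least) a constant times $(1+\lambda\delta_m+\lambda|v-v_m|)^{-s}$, and the alternative bound $(1+\lambda\delta_m)^{-s}q^{-k}$ for far cells is never used.
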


	\begin{proof}
		The support of $\psi$ forces $m$ to lie in a fixed-radius neighborhood of the $k$-dimensional set $q(\Gamma(u)-\Gamma(U))$. This neighborhood contains $O(q^k)$ lattice points. For each such $m$, the set of $v$ contributing to the integral is contained, up to finite multiplicity, in a ball of radius $O(1/q)$ in the parameter variables. By \eqref{bl}, the integral is bounded by
		\[
		\int_{z\in \mathbb{R}^k:|z|\ls 1/q}(1+\lambda |z|)^{-s}\,dz
		\ls
		\begin{cases}
			\lambda^{-k},&s>k,\\
			\lambda^{-k}\log(2\lambda/q),&s=k,\\
			\lambda^{-s}q^{s-k},&0<s<k.
		\end{cases}
		\]
		Multiplying by the $O(q^k)$ possible values of $m$ gives the desired estimate.
	\end{proof}
	\begin{lemma}\label{lem:rho-absorption}
		Let \(n=\la^2\in\mathbb{N}\), \(a\in\R\).  For every
		\(\eps>0\),
		\[
		\sum_{q\le \la}q^a\rho(q,n)^{1/2}
		\ls_{\eps} \la^{\max(a+1,0)+\eps}.
		\]
	\end{lemma}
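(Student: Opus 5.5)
We will bound the sum by grouping $q$ according to the odd-part gcd, and use the divisor bound. Write $q=2^hq_1$ with $q_1$ odd, so that $\rho(q,n)^{1/2}=2^{h/2}(q_1,n)^{1/2}\le q^{1/2}$ is never worse than trivial. The factor $2^{h/2}$ will be absorbed crudely: the sum over $h$ has only $O(\log\la)$ terms. We will show that the $(q_1,n)$ factor costs only $\la^\eps$ on average.

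For the odd part, fix a divisor $g\mid n$ and collect the $q_1$ with $(q_1,n)=g$. Such $q_1$ are multiples of $g$, say $q_1=gr$, so $q=2^hgr$. Then
\[
\sum_{q\le\la}q^a\rho(q,n)^{1/2}
\le\sum_{g\mid n}\sum_{h\ge0}\sum_{r\le \la/(2^hg)}(2^hgr)^a\,2^{h/2}g^{1/2}.
\]
We split into cases according to $a$.

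\emph{Case $a\ge -1$ (more precisely $a>-1$).} We bound the inner sum by
\[
\sum_{r\le X}r^a\ls X^{a+1}\qquad\text{with } X=\la/(2^hg).
\]
This gives a contribution
\[
(2^hg)^a\,2^{h/2}g^{1/2}\,(\la/(2^hg))^{a+1}=\la^{a+1}\,2^{-h/2}\,g^{-1/2}\le\la^{a+1}.
\]
The sum over $h$ then converges geometrically. The sum over $g\mid n$ is bounded by the divisor function $\tau(n)\ls_\eps n^{\eps}\ls\la^{2\eps}$. Hence the total is $\ls_\eps\la^{a+1+\eps}$.

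\emph{Case $a=-1$.} The inner sum is instead $\log\la$, which is absorbed into $\la^\eps$.

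\emph{Case $a<-1$.} The inner sum is $O(1)$, so the contribution is
\[
(2^hg)^a\,2^{h/2}g^{1/2}=2^{h(a+1/2)}g^{a+1/2}.
\]
If $a<-1/2$, this is summable in $h$ and bounded by $1$ in $g$. If $-1\le a+1/2<0$, the same holds. In all subcases with $a<-1$ we have $a+1/2<-1/2$, so each term is $\le 1$. Summing over $g\mid n$ gives $\tau(n)\ls_\eps\la^\eps$, and summing over $h\le\log_2\la$ gives another logarithm. Therefore the total is $\ls_\eps\la^{\eps}=\la^{\max(a+1,0)+\eps}$.

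The only delicate point is justifying the divisor bound $\tau(n)\ls_\eps n^\eps$ and checking that the dyadic factor $2^{h/2}$ is always compensated. In the case $a>-1$ it is compensated by the factor $2^{-h(a+1)}$ coming from the shortened range $X$. In the case $a<-1$ it is compensated by the factor $2^{ha}$ in the summand. The remainder is routine bookkeeping. The bound is uniform in $n=\la^2$ since $\log n\approx\log\la$.
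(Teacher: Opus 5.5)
Your proof is correct and follows essentially the same route as the paper: write $q=2^hq_1$, set $q_1=gr$ with $g\mid n$, sum $r^a$ over $r\le \la/(2^hg)$ separately for $a>-1$ and $a\le -1$, and absorb the divisor count $\tau(n)\ls_\eps\la^\eps$. The only cosmetic difference is that you group by the exact value $g=(q_1,n)$ and drop the constraint $(gr,n)=g$, whereas the paper uses $(q_1,n)^{1/2}\le\sum_{\delta\mid (q_1,n)}\delta^{1/2}$; both give the same majorizing triple sum.
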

	
	\begin{proof}
		Write \(q=2^h q_1\), where \(q_1\) is odd.  Since
		\[
		(q_1,n)^{1/2}\le
		\sum_{\substack{\delta\mid n\\ \delta\mid q_1}}\delta^{1/2},
		\]
		we have
		\[
		\sum_{q\le \la}q^a\rho(q,n)^{1/2}
		\le
		\sum_{2^h\le \la}
		\sum_{\substack{\delta\mid n\\ \delta\ {\rm odd}}}
		2^{h/2}\delta^{1/2}(2^h\delta)^a
		\sum_{r\le \la/(2^h\delta)}r^a.
		\]
		If \(a>-1\), then the inner sum is 
		\[\sum_{r\le \la/(2^h\delta)}r^a\ls (\la/(2^h\delta))^{a+1}\] and then
		\[
		\sum_{q\le \la}q^a\rho(q,n)^{1/2}\ls \la^{a+1}\sum_{2^h\le \la}2^{-h/2}
		\sum_{\delta\mid n}\delta^{-1/2}
		\ls_{\eps}\la^{a+1+\eps}.
		\]
		The factor $\la^\eps$ comes from the divisor bound.
		If \(a\le-1\), then the inner sum is \(O_\eps( (\la/(2^h\delta))^{\eps})\) for any $\eps>0$, and the remaining sums are
		\(O_{\eps}(\la^\eps)\). 
	\end{proof}
	
	\begin{remark}
		The factor \(\rho(q,n)^{1/2}\) does not worsen the power of \(\la\) beyond the
		usual unweighted \(q^a\)-summation, apart from an \(\la^\eps\) loss.
	\end{remark}
	\begin{proposition}
		For every
		\(\eps>0\),	\[	\sup_{u\in U}\int_U|S_n(\Gamma(u)-\Gamma(v))|dv\ls_\eps	\la^{d-k-2+\eps}+\la^{\frac{d-1}2+\eps}. \]
	\end{proposition}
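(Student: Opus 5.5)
We plug the pointwise bound \eqref{sn} into the Schur integral \eqref{schur}, with $x=\Gamma(u)-\Gamma(v)$. The remainder term $\la^{s+\eps}$, with $s=\frac{d-1}2$, is integrated over the bounded set $U$ and contributes $O_\eps(\la^{\frac{d-1}2+\eps})$ directly. For the main term we pull the $q$-sum outside the integral. This gives
\[
\sup_{u}\int_U|S_n(\Gamma(u)-\Gamma(v))|\,dv\ls_\eps \la^{\frac{d-1}2+\eps}+\la^{d-2}\sum_{1\le q\le\la}q^{-s+\eps}\rho(q,n)^{1/2}\,\sup_u I_q(u),
\]
where $I_q(u)$ is exactly the quantity controlled in Lemma~\ref{lem:packing}. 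The remaining work is to insert the three cases of Lemma~\ref{lem:packing}, according to the relation between $s=\frac{d-1}2$ and $k$, and sum in $q$ using Lemma~\ref{lem:rho-absorption}.

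\emph{Case $s>k$.} Here $I_q\ls q^k\la^{-k}$, so the main term is bounded by
\[
\la^{d-2-k}\sum_{q\le\la}q^{k-s+\eps}\rho(q,n)^{1/2}.
\]
By Lemma~\ref{lem:rho-absorption} with $a=k-s+\eps$, this is $\la^{d-2-k+\max(k-s+1,0)+2\eps}$. If $k-s+1\le0$, we obtain $\la^{d-k-2+\eps}$. If instead $k<s<k+1$, the exponent becomes $d-2-s+1=\frac{d-1}2$, so the term is absorbed into the remainder.

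\emph{Case $s=k$.} The only change from the previous case is the extra factor $\log(2\la/q)\ls\log\la$, which is absorbed into $\la^\eps$. Since $a+1=1+\eps>0$, we get $\la^{d-2-k+1+\eps}=\la^{s+\eps}=\la^{\frac{d-1}2+\eps}$.

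\emph{Case $0<s<k$.} Here $I_q\ls q^s\la^{-s}$, so the main term is $\la^{d-2-s}\sum_q q^{\eps}\rho(q,n)^{1/2}$. Lemma~\ref{lem:rho-absorption} with $a=\eps$ bounds this by $\la^{d-2-s+1+2\eps}=\la^{\frac{d-1}2+2\eps}$.

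Combining the three cases and renaming $\eps$ gives the stated bound $\la^{d-k-2+\eps}+\la^{\frac{d-1}2+\eps}$.

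The only step needing care is the first case: one must check that the transitional range $k<s<k+1$ lands exactly on the remainder exponent $\frac{d-1}2$ rather than exceeding it. The computation above confirms this. Everything else is bookkeeping of the $\eps$-losses coming from \eqref{Ksum}, from the divisor bound in Lemma~\ref{lem:rho-absorption}, and from the logarithm in the case $s=k$. Note also that Proposition~\ref{propm} requires $d\ge4$; for $d=3$ the statement is not needed in the paper, or one may restrict to $d\ge4$ here.
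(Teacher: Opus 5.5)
Your proposal is correct and follows the paper's proof essentially line by line. You insert \eqref{sn}, bound the integrated main term by Lemma~\ref{lem:packing} in the three regimes $s>k$, $s=k$, $s<k$, and sum in $q$ with Lemma~\ref{lem:rho-absorption}, and the transitional range $s-1<k<s$ lands exactly on the remainder exponent $\frac{d-1}{2}$; your side remark that the argument needs $d\ge4$ for Proposition~\ref{propm} matches how the result is used in Theorem~\ref{ntthm}.
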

	\begin{proof}
		Let \(s=\frac{d-1}2\). By \eqref{sn}, we obtain
		\[
		\begin{aligned}
			\int_U|S_n(\Gamma(u)-\Gamma(v))|dv&\ls_\eps
			\la^{d-2}
			\sum_{1\le q\le\la}q^{-s+\eps}\rho(q,n)^{1/2}I_{q}(u)
			+\la^{s+\eps}.
		\end{aligned}
		\]

		If \(s>k\), Lemma~\ref{lem:packing} gives \(I_{q}(u)\ls q^k\la^{-k}\), so Lemma~\ref{lem:rho-absorption} implies 
		\begin{align*}
			\int_U|S_n(\Gamma(u)-\Gamma(v))|dv&\ls_\eps
			\la^{d-k-2}
			\sum_{q\le\la}q^{k-s+\eps}\rho(q,n)^{1/2}
			+\la^{s+\eps}\\
			&\ls_\eps
			\la^{d-k-2+\max(k-s+1,0)+\eps}+\la^{s+\eps}.
		\end{align*}
		If \(k\le s-1\), this is \(\la^{d-k-2+\eps}+\la^{s+\eps}\). If
		\(s-1<k<s\), the bound is  $\la^{s+\eps}$.
		
		If \(s=k\), then Lemma~\ref{lem:packing} gives
		$$I_{q}(u)\ls q^k\la^{-k}\log(2\la/q)$$ and Lemma~\ref{lem:rho-absorption}
		again gives the bound \(\la^{s+\eps}\). 
		
		If \(s<k\), then
		Lemma~\ref{lem:packing} gives \(I_{q}(u)\ls q^s\la^{-s}\), and Lemma~\ref{lem:rho-absorption}
		also implies the bound \(\la^{s+\eps}\).
		These bounds are uniform in $u$. Combining the cases yields the result.
	\end{proof}
	Using the estimate for the norm of $T_\la$, we immediately obtain the following new restriction estimates for eigenfunctions.
	\begin{theorem}\label{ntthm}
		Let \(d\ge4\) and $1\le k\le d-1$.  Let $\Sigma\subset \mathbb{T}^d$ be a compact smooth $k$-dimensional submanifold. Then for any
		\(\eps>0\),
		\[\|e_\la\|_{L^2(\Sigma)}\ls_\eps (\la^{\frac{d-k-2}2+\eps}+\la^{\frac{d-1}4+\eps})\|e_\la\|_{L^2(\mathbb{T}^d)}.\]
		In particular, when $k\le \frac{d-3}2$ we have the sharp estimate 
		\[\|e_\la\|_{L^2(\Sigma)}\ls_\eps \la^{\frac{d-k-2}2+\eps}\|e_\la\|_{L^2(\mathbb{T}^d)}.\]  
	\end{theorem}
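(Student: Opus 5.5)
The plan is a $TT^*$ argument. Work on a coordinate patch $\Gamma:U\to\R^d$ as in the setup of this subsection, with $\Gamma$ bi-Lipschitz as in \eqref{bl}; a finite partition of unity then reduces the global statement to finitely many such patches. Define $T_\la:\ell^2(\E_\la)\to L^2(U)$ as above. Since $e_\la=\sum_\xi a_\xi e^{2\pi i\xi\cdot x}$ with $\|e_\la\|_{L^2(\T^d)}=\|a\|_{\ell^2}$, it suffices to show that $\|T_\la\|^2\ls_\eps \la^{d-k-2+\eps}+\la^{\frac{d-1}2+\eps}$. By Parseval, $\|T_\la\|^2=\|T_\la T_\la^*\|_{L^2(U)\to L^2(U)}$. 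The operator $T_\la T_\la^*$ has integral kernel
\[
\sum_{\xi\in\E_\la}e^{2\pi i\xi\cdot(\Gamma(u)-\Gamma(v))}=S_n(\Gamma(u)-\Gamma(v)).
\]
This kernel satisfies $|S_n(z)|=|S_n(-z)|$, so Schur's test bounds $\|T_\la T_\la^*\|$ by $\sup_{u}\int_U|S_n(\Gamma(u)-\Gamma(v))|\,dv$, which is exactly \eqref{schur}.

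Next, apply the preceding Proposition, which bounds this supremum by $\la^{d-k-2+\eps}+\la^{\frac{d-1}2+\eps}$. Taking square roots and using $\sqrt{x+y}\le\sqrt x+\sqrt y$ gives
\[
\|e_\la\|_{L^2(\Sigma)}\ls_\eps \bigl(\la^{\frac{d-k-2}2+\eps}+\la^{\frac{d-1}4+\eps}\bigr)\|e_\la\|_{L^2(\T^d)}
\]
on each patch. Summing over the finitely many patches yields the first claim. The surface measure on $\Sigma$ is comparable to $du$ on each patch, so $\|e_\la\|_{L^2(\Sigma)}$ is controlled by $\|T_\la a\|_{L^2(U)}$ summed over patches.

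For the second claim, compare the two exponents. The inequality $\frac{d-k-2}{2}\ge\frac{d-1}{4}$ is equivalent to $2d-2k-4\ge d-1$, that is, $k\le\frac{d-3}{2}$. In that range the second term is absorbed, leaving $\la^{\frac{d-k-2}2+\eps}$. This matches the lower bound of Proposition \ref{prop1} with $m=d-k$, up to $\la^\eps$, which is why the estimate is called sharp.

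There is no real obstacle, since all the analytic work sits in the preceding Proposition (Lemmas \ref{lem:packing} and \ref{lem:rho-absorption}). The only points to check are the following. First, the implicit constants depend only on the patch, through the bi-Lipschitz constant and $|U|$. Second, the Schur bound is uniform in $u$, which the Proposition already provides.
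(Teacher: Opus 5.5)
Your proposal is correct and is essentially the paper's argument: the \(TT^*\) reduction with kernel \(S_n(\Gamma(u)-\Gamma(v))\), Schur's test (symmetric because \(S_n\) is real and even, as \(\E_\la=-\E_\la\)), the preceding Proposition, taking square roots, and the exponent comparison \(\frac{d-k-2}{2}\ge\frac{d-1}{4}\iff k\le\frac{d-3}{2}\). One small correction: \(\|T_\la\|^2=\|T_\la T_\la^*\|\) is the standard operator-norm identity, not Parseval; Parseval is what gives \(\|e_\la\|_{L^2(\T^d)}=\|a\|_{\ell^2}\).
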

	Since $\frac{d-1}4<\frac{d-k-1}2$ when $k<\frac{d-1}2$, these bounds improve \eqref{BGT01} when $k<\frac{d-1}2$. We remark that if $\Sigma$ is totally geodesic, then the implicit constants in Theorem \ref{ntthm} can be independent of the direction of $\Sigma$. Therefore, by Theorem \ref{prop3a}, the following estimates on $A_{k,d,\la}$ hold.
	\begin{proposition}\label{prop:circle-band}
		Let \(d\ge4\) and $1\le k\le d-1$.  Then, for any
		\(\eps>0\),
		\[
		A_{k,d,\la}\ls_{\eps}
		\la^{d-k-2+\eps}+\la^{\frac{d-1}2+\eps}.
		\]	In particular, when $k\le \frac{d-3}2$ we have the sharp estimate 
		\[A_{k,d,\la}\ls_\eps \la^{d-k-2+\eps}.\]  
	\end{proposition}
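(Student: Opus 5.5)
The plan is to deduce Proposition~\ref{prop:circle-band} from Theorem~\ref{ntthm} combined with the sharpness half of Theorem~\ref{prop3a}. The intermediate goal is a restriction bound for totally geodesic patches whose constant does not depend on the direction of the patch.

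Let $\Sigma$ be a unit-size totally geodesic patch $\Gamma(t)=x_0+Lt$ with $L$ an isometric embedding. Then $\Gamma$ is exactly bi-Lipschitz, so \eqref{bl} holds with constant $1$, uniformly in $L$. I will rerun the argument of Section~\ref{sec4} for this patch and check uniformity.

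\begin{itemize}
\item In Lemma~\ref{lem:packing}, the number of $m$ in a fixed-radius neighborhood of $q(\Gamma(u)-\Gamma(U))$ is $O(q^k)$. This follows from Lemma~\ref{lem:rotated-box} applied to the flat $k$-plane, with constants depending only on $d,k$.
\item The $v$-sets are balls of radius $O(1/q)$, again with bounds independent of $L$.
\item Proposition~\ref{propm} and Lemma~\ref{lem:rho-absorption} do not involve $\Sigma$ at all.
\end{itemize}

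Consequently the Schur integral \eqref{schur} is $\lesssim_\eps \la^{d-k-2+\eps}+\la^{\frac{d-1}2+\eps}$ uniformly over all directions. Through the $TT^*$ step this gives
\[
\|e_\la\|_{L^2(\Sigma)}^2\lesssim_\eps \bigl(\la^{d-k-2+\eps}+\la^{\frac{d-1}2+\eps}\bigr)\|e_\la\|_{L^2(\T^d)}^2
\]
for every such $\Sigma$ and every eigenfunction.

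Next, by \eqref{Tdksharp}, for each $\la$ there exist an eigenfunction and a totally geodesic unit patch $\Sigma$ with $A_{k,d,\la}\|e_\la\|_2^2\lesssim \|e_\la\|_{L^2(\Sigma)}^2$. The implicit constant depends only on $d,k,\nu,C_0$, since the construction takes $\Omega_0$ depending only on these. Combining this with the uniform upper bound gives $A_{k,d,\la}\lesssim_\eps \la^{d-k-2+\eps}+\la^{\frac{d-1}2+\eps}$. The patch in \eqref{Tdksharp} is a full unit patch containing the small ball $L\Omega_0$. To be safe I will instead apply the uniform bound to the patch parametrized by $\Omega_0$ itself, which is equally admissible.

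For the ``in particular'' part, note that $k\le \frac{d-3}2$ means $d-k-2\ge \frac{d-1}2$, so the first term dominates. Sharpness then follows from Proposition~\ref{thm:limsup}.

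The main obstacle is the uniformity in direction. An irrational direction gives a dense $\Sigma$ in $\T^d$, but we work on the lifted patch in $\R^d$ with $|S_n(\Gamma(u)-\Gamma(v))|$ only for $u,v$ in one bounded patch, so no compactness in $\T^d$ is used. The only place direction could enter is the lattice count in Lemma~\ref{lem:packing}, and Lemma~\ref{lem:rotated-box} controls it uniformly.
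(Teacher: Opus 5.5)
Your proposal is correct and follows the paper's argument. The paper likewise notes that for totally geodesic $\Sigma$ the constants in Theorem~\ref{ntthm} are independent of direction, and then uses Theorem~\ref{prop3a} (specifically the sharpness half \eqref{Tdksharp}) to convert the restriction bound into the bound on $A_{k,d,\la}$; your check of direction-uniformity in Lemma~\ref{lem:packing} via Lemma~\ref{lem:rotated-box} fills in a step the paper only asserts.
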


	\section{Proof of Theorem~\ref{thm1}}\label{sec5}
	Let $m=d-k\ge1$ be the codimension of the totally geodesic submanifold. By
	Theorem~\ref{prop:logfree-reduction}, the restriction estimates follow from the
	corresponding bounds for $A_{k,d,\lambda}$ in Propositions \ref{prop:endpoint-bound}, \ref{prop:geometric-global}, \ref{prop:circle-band}.
	
	If $m=1$, then Proposition~\ref{prop:endpoint-bound} gives the
	lattice exponent
	\[
	\beta(1,d)=\min\left\{\frac{d-2}{2d},\frac14\right\}.
	\]
	Dividing by two gives $\alpha(1,3)=1/12$ and $\alpha(1,d)=1/8$ for $d\ge4$.
	
	Assume next that $2\le m\le d-1$. Proposition~\ref{prop:geometric-global}
	gives the lattice exponent
	\[
	\beta(m,d)=
	\min\left\{\frac{(m-1)(d-1)}d,\ m-\frac32+2^{-m}\right\}.
	\]
	For $d\ge5$, Proposition~\ref{prop:circle-band} also gives
	\[
	A_{d-m,d,\lambda}
	\ls \lambda^{m-2+\eps}+\lambda^{(d-1)/2+\eps}.
	\]
	This has the conjectural lattice exponent $m-2$ exactly when
	$m-2\ge (d-1)/2$, equivalently $d\le 2m-3$.
	
	For $m=2,3$,  we have $\alpha(m,d)=\beta(m,d)/2$. For
	$m\ge4$, Proposition~\ref{prop:circle-band} gives $\alpha(m,d)=m/2-1$ when $d\le 2m-3$.
	Outside that range the exponent $\beta(m,d)$ is smaller: at $d=2m-2$ the first
	term in $\beta(m,d)$ equals $m-3/2$, while for $d\ge2m-1$ the second
	term equals $m-3/2+2^{-m}$ and is the smaller one. Dividing these lattice
	exponents by two gives the stated formula for $\alpha(m,d)$. 
	In particular, Theorem \ref{ntthm} gives sharp estimates for any compact submanifold of codimension $m\ge \frac{d+3}2$.

	\section{Further discussions on restrictions to curves}\label{sec6}
	In this section, we prove some new restriction estimates for curves and discuss their relation to the Discrete Restriction Conjecture (Bourgain-Demeter \cite[Conjecture 2.6]{BD}). By Theorem \ref{ntthm}, for any curve segment $\Sigma \subset \mathbb{T}^d$, when $d\ge5$ we have sharp estimates 
	\begin{equation}\label{T5}
		\|e_\la\|_{L^2(\Sigma)}\ls_\eps \la^{\frac{d-3}2+\eps}\|e_\la\|_{L^2(\mathbb{T}^d)},
	\end{equation}
	and when $d=4$, we have
	\begin{equation}\label{T4}
		\|e_\la\|_{L^2(\Sigma)}\ls \la^{\frac34+\eps}\|e_\la\|_{L^2(\mathbb{T}^4)}.
	\end{equation}
	We remark that the power $\frac34$ agrees with the one from the slicing and packing method in Proposition \ref{prop:geometric-global}. The conjectural power is $\frac12$. See Conjecture \ref{HZconj} and Remark \ref{dcrem}.

	When $d=2$, Bourgain-Rudnick proved the following result.
	\begin{theorem}[Bourgain-Rudnick \cite{BR2011,BR2015}]
		If $\Sigma\subset \mathbb{T}^2$ is a geodesic segment, then 
		\begin{equation}
			\|e_\la\|_{L^2(\Sigma)}\ls \sqrt{\log\la}\|e_\la\|_{L^2(\mathbb{T}^2)}.
		\end{equation}
		If $\Sigma\subset \mathbb{T}^2$ is a smooth curve segment whose geodesic curvature is nowhere vanishing, then 
		\begin{equation}\label{unf}
			\|e_\la\|_{L^2(\Sigma)}\ls \|e_\la\|_{L^2(\mathbb{T}^2)}.
		\end{equation}
	\end{theorem}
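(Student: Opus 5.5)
The plan is to prove both parts of this theorem of Bourgain--Rudnick on \(\mathbb T^2\) by the same \(TT^*\)/Schur scheme already used in Theorem~\ref{prop3a}. We reduce everything to counting pairs of lattice points on the circle \(|\xi|=\la\) whose differences are nearly orthogonal to the curve.

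\emph{Geodesic segment.} Parametrize \(\Sigma\) as \(x=x_0+t\omega\), with \(t\) in a unit interval. Majorize \(\mathbf 1_{[0,1]}\) by a \(\Psi\ge 0\) with \(\widehat\Psi\ge0\) supported in \([-c_0,c_0]\). Expanding the square exactly as in the proof of Theorem~\ref{prop3a} gives
\[
\|e_\la\|_{L^2(\Sigma)}^2\ls \sum_{m,n\in\E_\la}|a_m||a_n|\,\mathbf 1_{|(n-m)\cdot\omega|\le c_0}.
\]
By Schur's test it suffices to bound, for each fixed \(m\), the number of \(n\in\E_\la\) lying in the unit band \(B(\omega,m)\); this number is at most \(A_{1,2,\la}\). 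By Lemma~\ref{lem:end-tube} with \(d=2\), this band meets the circle in at most two arcs, and each arc has length \(O(\la^{1/2})\). Each such arc is therefore an admissible set for \(T_{2,1}(\la,\la^{1/2};C)\). The Bourgain--Rudnick bound \(T_{2,1}(\la,\la^{1/2};C)\ls\log\la\) quoted in the Remark after Proposition~\ref{prop:local} then gives \(A_{1,2,\la}\ls\log\la\). Taking square roots yields the \(\sqrt{\log\la}\) bound.

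\emph{Curved segment.} Let \(\gamma(t)\), \(t\in I\), be an arclength parametrization with \(|\gamma''|\approx1\). The operator \(TT^*\) has kernel \(\int \chi(t)e^{2\pi i(n-m)\cdot\gamma(t)}dt\) for a smooth cutoff \(\chi\). Write \(h=n-m\) and decompose dyadically in \(|h|\approx 2^j\). If \(h\cdot\gamma'(t)\) stays away from zero on \(\operatorname{supp}\chi\), integrating by parts gives decay \(O_N(|h|^{-N})\). Near a critical point, van der Corput's lemma with second derivative \(|h\cdot\gamma''|\gtrsim|h|\) gives the bound \(|h|^{-1/2}\). So the Schur sum for fixed \(m\) is bounded by
\[
\sum_j 2^{-j/2}\,\#\{n\in\E_\la:|n-m|\approx 2^j,\ n-m\ \text{nearly normal to}\ \gamma\}.
\]

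The main obstacle is controlling this count uniformly. The points \(n\) lie on the circle within distance \(2^j\) of \(m\), i.e.\ on an arc of length about \(2^j\). For \(2^j\le\la^{\delta}\) with \(\delta<1/2\), the Cilleruelo--C\'ordoba bound \(T_{2,1}(\la,\la^\delta;C)\ls 1\) gives \(O(1)\) points per arc. The dyadic sum over \(2^j\le \la^\delta\) is then geometric, hence \(O(1)\). For large \(2^j\), I would use the fact that the number of lattice points on an arc of length \(\la^{1/2}\) is at most \(O(\log \la)\). I would also use that the critical-point condition \(h\perp\gamma'(t)\) restricts \(h\) to an angular sector; combined with curvature, this lets the stationary phase bound be summed over arcs with gain \(2^{-j/2}\). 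In the worst case this still leaves a \(\la^\eps\) loss. To remove it, I would follow Bourgain--Rudnick and use the fact that at most \(O(1)\) lattice points on \(|\xi|=\la\) lie on an arc of length \(\la^{1/3}\) (Jarn\'ik). This bounds the number of \(n\) with \(|n-m|\approx 2^j\) by \(O(2^j\la^{-1/3}+1)\), and the resulting sum \(\sum_j 2^{-j/2}(2^j\la^{-1/3}+1)\) must then be controlled. The delicate point is the range \(2^j\gg\la^{2/3}\), where this crude count fails. There I expect to need the genuine Bourgain--Rudnick input: in \cite{BR2011} they handle this range using the full \(L^2\) averaging over \(m\) rather than the pointwise Schur bound.
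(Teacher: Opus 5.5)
Your geodesic argument is sound. It is what the paper's framework yields; the paper itself gives no proof and only cites Bourgain--Rudnick for this theorem. Theorem~\ref{prop3a} and Lemma~\ref{lem:end-tube} are stated for $d\ge3$, but their proofs work verbatim for $d=2$. After cutting each of the two arcs into $O(1)$ pieces of length at most $\la^{1/2}$, the quoted bound $T_{2,1}(\la,\la^{1/2};C)\ls\log\la$ gives $A_{1,2,\la}\ls\log\la$.

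\textbf{The curved case has a gap.} You stop at the large-$|h|$ range and defer it to an unspecified ``$L^2$ averaging over $m$'' from \cite{BR2011}. That input is not needed, and the obstruction you describe at $2^j\gg\la^{2/3}$ is not real. What is missing is to pair the kernel decay with the divisor bound $\#\E_\la=r_2(\la^2)\ls_\eps\la^\eps$. Your count $O(2^j\la^{-1/3}+1)$ ignores that the whole circle carries only $\la^{o(1)}$ lattice points, while on that range $|K|\le\la^{-1/3}$.

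\textbf{How to close it.} Nonvanishing curvature gives $\max(|h\cdot\gamma'|,|h\cdot\gamma''|)\gs|h|$ on the whole support, since $\gamma''\perp\gamma'$ and $|\gamma''|\ge c_0$. Van der Corput then yields $|K(h)|\ls(1+|h|)^{-1/2}$ for every $h$, with no angular bookkeeping. Fix $m\in\E_\la$ and split at $|n-m|=\la^{1/3}$:
\[
\sum_{n\in\E_\la}|K(n-m)|\ls\#\{n\in\E_\la:|n-m|<\la^{1/3}\}+\la^{-1/6}\,\#\E_\la\ls_\eps 1+\la^{-1/6+\eps}.
\]
The first set lies on a single arc of length $O(\la^{1/3})$. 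That arc is covered by $O(1)$ arcs of length less than $(2\la)^{1/3}$. Each such arc contains at most two lattice points by Jarn\'ik: three lattice points $x_1,x_2,x_3$ on it would span a nondegenerate triangle of area $\ge1/2$. Yet that area equals $|x_1-x_2|\,|x_2-x_3|\,|x_3-x_1|/(4\la)<1/2$. Choosing $\eps<1/6$ and applying Schur's test gives \eqref{unf} uniformly in $\la$.

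This is essentially the Bourgain--Rudnick argument in dimension two. Cilleruelo--C\'ordoba at scale $\la^\delta$, $\delta<1/2$, works equally well in place of Jarn\'ik.
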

	It is conjectured that the uniform bound should hold for any real-analytic curve. Bourgain-Rudnick's proof in \cite{BR2011} actually gives the uniform bound \eqref{unf} for any real-analytic curve that is not a geodesic. Thus geodesics are the only unsolved case for \eqref{unf} among all real-analytic curves.
	
	When $d=3$, we prove the following results using Bourgain--Rudnick's cap estimates and the van der Corput lemma. These improve the general bound $O(\la^\frac12\sqrt{\log\la})$ in \eqref{BGT01}, while the conjectural bound is $O_\eps(\la^\eps)$. See Conjecture \ref{HZconj} and Remark \ref{dcrem}.
	\begin{theorem}\label{T3thm}
		If $\Sigma\subset \mathbb{T}^3$ is (i) a geodesic segment, or (ii) a smooth
		curve segment whose torsion is nowhere vanishing, or (iii) a real-analytic
		curve segment whose geodesic curvature is nowhere vanishing, then for any $\eps>0$
		\begin{equation}\label{T3}
			\|e_\la\|_{L^2(\Sigma)}\ls_\eps \la^{\frac13+\eps}\|e_\la\|_{L^2(\mathbb{T}^3)}.
		\end{equation}
		If $\Sigma\subset \mathbb{T}^3$ is (iv) a smooth planar curve segment whose
		geodesic curvature is nowhere vanishing, then 
		\begin{equation}
			\|e_\la\|_{L^2(\Sigma)}\ls_\eps \la^{\frac14+\eps} \|e_\la\|_{L^2(\mathbb{T}^3)}.
		\end{equation}
	\end{theorem}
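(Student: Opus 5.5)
Part (i) follows directly from earlier results: a geodesic segment in $\mathbb{T}^3$ is a totally geodesic submanifold with $k=1$, $m=2$. Theorem \ref{prop3a} then reduces the bound to $A_{1,3,\la}\ls_\eps\la^{2/3+\eps}$, and this is exactly Proposition \ref{prop:geometric-global} with $\beta(2,3)=\min\{2\cdot 2/3,\ 1/2+1/4\}=2/3$. So $\alpha(2,3)=1/3$. Alternatively, and more in the spirit of cases (ii)–(iv), the same bound comes from the Bourgain--Rudnick cap estimate $\#(\E_\la\cap \text{cap of radius } r)\ls_\eps \la^\eps(1+r^2/\la^{1/2})$ (for $d=3$), applied to the band in Lemma \ref{lem:shell}.

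For (ii)–(iv) I would run the $TT^*$ and Schur argument from Section \ref{sec4}. Parametrize $\Sigma$ by arclength $\gamma(t)$ and bound
\[
\sup_t\sum_{\xi,\eta\in\E_\la}\Big|\int \chi(s)\,e^{2\pi i(\xi-\eta)\cdot\gamma(s)}\,ds\Big|\cdot(\text{Schur weights})
\]
by a kernel estimate. Fix $\xi$ and let $h=\xi-\eta$, so $|h|\le 2\la$. The oscillatory integral $J(h)=\int\chi(s)e^{2\pi i h\cdot\gamma(s)}ds$ is bounded via van der Corput.

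\begin{itemize}
\item In case (ii), nonvanishing torsion means $\gamma',\gamma'',\gamma'''$ span $\R^3$. Hence $\max_{j\le3}|h\cdot\gamma^{(j)}(s)|\gtrsim|h|$, and van der Corput gives $|J(h)|\ls|h|^{-1/3}$.
\item In case (iv), with $\Sigma$ planar in a plane with normal $N$, write $h=h_N N+h_\parallel$. Curvature gives $|J(h)|\ls|h_\parallel|^{-1/2}$.
\item In case (iii), real analyticity with nonvanishing curvature gives decay $|h|^{-1/2}$, except near the (finitely many, by analyticity) directions where $h$ is nearly normal to the osculating data. There a weaker bound, at worst $|h|^{-1/3}$ after a further reduction, still holds, or the set of $h$ involved is small.
\end{itemize}

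Then I dyadically decompose $|h|\approx 2^j$. By Cauchy--Schwarz and Schur I need $\sum_j 2^{-j\theta}\,\#\{\eta\in\E_\la:|\xi-\eta|\approx2^j\}$, weighted appropriately. Counting alone is too crude: the number of $\eta$ within $2^j$ of $\xi$ is a cap count $\ls \la^\eps(1+2^{2j}/\la^{1/2})$ by Bourgain--Rudnick. That gives a sum of order $\la^\eps\sum_j 2^{-j/3}(1+2^{2j}\la^{-1/2})\approx \la^{2-1/3-1/2}$, which is far too big. So I would instead combine decay with localization.

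Only $\eta$ with $h$ nearly orthogonal to $\gamma'$ (nonstationary in $s$) contribute significantly. If $|h\cdot\gamma'(s)|\gg$ the relevant scale on the whole support, integration by parts gives rapid decay. So I would partition $s$ into intervals of length $\delta$ and, on each, count lattice points of $\E_\la$ in the band $\{|h\cdot\gamma'(s_0)|\ls \delta^{-1}\}$ intersected with the cap $|h|\approx 2^j$. Each such region is a cap of radius $2^j$ cut by a band of width $\delta^{-1}$. I would bound it by covering with caps of radius $\sim\delta^{-1}$, then balance $\delta$ against the van der Corput gain $|h|^{-1/3}$ (respectively $|h|^{-1/2}$). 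Optimizing should produce $\la^{2/3}$ for the kernel sum in cases (ii)/(iii) and $\la^{1/2}$ in case (iv), and hence the square-rooted bounds $\la^{1/3+\eps}$ and $\la^{1/4+\eps}$.

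The main obstacle I expect is this balancing step: making the band-within-cap count sharp enough using only Bourgain--Rudnick cap bounds and the $O(\la^\eps)$ bound on lattice points in planes (Lemma \ref{lem:two-parameter-sphere}). I would first try slicing the band region by planes orthogonal to $\gamma'(s_0)$, which gives $\delta^{-1}$ planes each carrying $\ls\la^\eps$ points per circle arc. I would fall back on cap counts when $2^j$ is small.
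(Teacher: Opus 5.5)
\textbf{Verdict: genuine gap.} Your part (i) is the paper's argument (Theorem \ref{prop3a} plus Proposition \ref{prop:geometric-global}), up to an arithmetic slip. The two entries of $\beta(2,3)$ are $(m-1)(d-1)/d=\tfrac23$ and $m-\tfrac32+2^{-m}=\tfrac34$, so the minimum is $\tfrac23$. Your alternative via the cap bound $\lambda^\varepsilon(1+r^2/\lambda^{1/2})$ does not obviously reach $\tfrac23$: covering the band by caps of radius $\ell$ and optimizing gives $\lambda^{3/4}$. The $\tfrac23$ actually comes from $O(\lambda^{1/2})$ pieces times $T_{3,1}(\lambda,\lambda^{1/2};C)\lesssim\lambda^{1/6+\varepsilon}$.

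\textbf{Cases (ii) and (iv).} Your $TT^*$/Schur/van der Corput framework and your decay bounds match the paper. However, the argument is never completed, and the step you call the main obstacle rests on a miscount. The cap bound you use is worse than trivial for $r\gg\lambda^{1/2}$. The right bound is the elementary $F_3(\lambda,r)\lesssim_\varepsilon\lambda^\varepsilon(1+r)$: a cap of diameter $r$ meets only $O(1+r)$ integer planes $x_1=c$, and each carries $O(\lambda^\varepsilon)$ points of $\E_\lambda$ by Lemma \ref{lem:two-parameter-sphere}. With it, case (ii) is immediate, since $\sum_{R\ \mathrm{dyadic}}R^{-1/3}\lambda^\varepsilon(1+R)\lesssim\lambda^{2/3+\varepsilon}$. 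No localization in $s$ and no balancing is needed. Your proposed slicing by planes orthogonal to $\gamma'(s_0)$ would fail anyway: for an irrational direction, the lattice points in such a band are not contained in $O(\delta^{-1})$ parallel planes.

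In case (iv) the decay is only in $|\pi_P h|$. You must therefore organize the count by $|\pi_P(\eta-\xi)|$, not by $|\xi-\eta|$: the $\eta$ with $h$ nearly normal to $P$ can have $|h|\sim\lambda$ but get no decay. The set $\{\eta:|\pi_P(\eta-\xi)|\le U\}$ lies in a $U$-tube about the line $\xi+P^\perp$. That tube meets $\lambda S^2$ in $O(1)$ caps of diameter $(\lambda U)^{1/2}$, so it has $\lesssim\lambda^\varepsilon(1+(\lambda U)^{1/2})$ points. Then $\sum_U U^{-1/2}\lambda^\varepsilon(1+(\lambda U)^{1/2})\lesssim\lambda^{1/2+\varepsilon}$.

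\textbf{Case (iii) contains an actual error.} The poorly decaying directions are not finitely many. They form the whole one-parameter family of binormals $B(s)$, where the decay is $|h|^{-1/3}$ when the torsion is nonzero; away from torsion zeros you are simply in case (ii). More seriously, analyticity allows isolated zeros of the torsion of order $k-3$ with $k\ge4$. For $h$ along the binormal there, the decay is only $|h|^{-1/k}$, not $|h|^{-1/3}$. For example, $\gamma(s)=(s,s^2,s^4)$ and $h=(0,0,\rho)$ give an integral of size $\approx\rho^{-1/4}$. Paired with the count $\lambda^\varepsilon\rho$, this yields only $\lambda^{1-1/k}\ge\lambda^{3/4}$.

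\textbf{What case (iii) needs.} The missing idea is a two-parameter decay bound in $\rho=1+|h|$ and $V=1+|\pi_{P_0}h|$, where $P_0$ is the osculating plane at the zero:
\begin{itemize}
\item $|K_0(h)|\lesssim\rho^{-1/k}$ when $V\le\rho^{2/k}$;
\item $|K_0(h)|\lesssim\rho^{-1/(3(k-2))}V^{-(k-3)/(3(k-2))}$ when $V>\rho^{2/k}$.
\end{itemize}
This is proved by splitting the $s$-integral at $\delta=(V/\rho)^{1/(k-2)}$. Inside, use the curvature of the projected plane curve; outside, use torsion $\approx r^{k-3}$ on dyadic annuli $|s|\approx r$. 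The badly decaying $\eta$ then lie in a tube of radius $\rho^{2/k}$ about $\xi+P_0^\perp$. The tube count gives $\rho^{-1/k}\lambda^\varepsilon(\lambda\rho^{2/k})^{1/2}=\lambda^{1/2+\varepsilon}$, while dyadic summation over $V>\rho^{2/k}$ gives $\lambda^{2/3+\varepsilon}$.
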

	\begin{proof}
		The geodesic case (i) follows from Theorem \ref{thm1}.  We prove the
		remaining cases by a local \(TT^*\) argument.  Write
		\[
		e_\lambda(x)=\sum_{\xi\in \mathcal E_\lambda}a_\xi e^{2\pi i\xi\cdot x},
		\qquad
		\mathcal E_\lambda=\mathbb Z^3\cap \lambda S^2,
		\qquad
		\|e_\lambda\|_{L^2(\mathbb T^3)}^2=\sum_{\xi\in\mathcal E_\lambda}|a_\xi|^2 .
		\]
		Choose a lift and an arclength parametrization
		\[
		\gamma:I\to\mathbb R^3,\qquad |\gamma'(t)|=1 .
		\]
		After a finite partition of unity it suffices to prove a weighted estimate
		with \(0\leq \chi\in C_0^\infty(I)\).  Put
		\[
		K(h)=\int_I \chi(t)e^{2\pi i h\cdot \gamma(t)}\,dt .
		\]
		Then
		\[
		\int_I \left|\sum_{\xi\in\mathcal E_\lambda}
		a_\xi e^{2\pi i\xi\cdot\gamma(t)}\right|^2\chi(t)\,dt
		=
		\sum_{\xi,\eta\in\mathcal E_\lambda}
		a_\xi\overline{a_\eta}K(\xi-\eta).
		\]
		Since \(K(-h)=\overline{K(h)}\), Schur's test reduces the proof to bounding
		\[
		B_\lambda:=\sup_{\xi\in\mathcal E_\lambda}
		\sum_{\eta\in\mathcal E_\lambda}|K(\xi-\eta)|.
		\]
		
		We shall use Bourgain--Rudnick's cap estimate \cite{BR2011} in the following two forms. If $F_3(\la,r)$ denotes the maximal number of points of $\mathbb{Z}^3\cap \la S^2$ in a spherical cap of Euclidean diameter $r$, then for \(1\le r\le 2\lambda\),
		\[
		F_3(\la,r)
		\lesssim_\varepsilon \lambda^\varepsilon(1+r).
		\]
		Consequently, for every two-plane \(P\subset\mathbb R^3\), with
		\(\pi_P\) denoting orthogonal projection onto \(P\),
		\[
		N_\xi(U;P)
		:=
		\#\{\eta\in\mathcal E_\lambda:|\pi_P(\eta-\xi)|\le U\}
		\lesssim_\varepsilon
		\lambda^\varepsilon\bigl(1+(\lambda U)^{1/2}\bigr),
		\qquad 1\le U\le 2\lambda .
		\]
		Indeed, the set is contained in the intersection of \(\lambda S^2\) with a
		\(U\)-tube about the affine line \(\xi+P^\perp\), and this intersection is
		covered by \(O(1)\) caps of Euclidean diameter \(C(\lambda U)^{1/2}\).
		
		We also use the standard finite-type van der Corput lemma: if
		\[
		\max_{1\le j\le m} |\phi^{(j)}(t)|\ge A
		\]
		on an interval, with the amplitude in a fixed bounded \(C^1\)-family, then
		\[
		\left|\int e^{i\phi(t)}a(t)\,dt\right|\lesssim A^{-1/m}.
		\]
		Now Case (ii) and (iv) follow immediately.
		
		\noindent\textbf{(ii) Nowhere vanishing torsion case.}
		Assume
		\[
		\det(\gamma',\gamma'',\gamma''')\neq 0
		\]
		on \(\operatorname{supp}\chi\). Then the finite-type van der Corput estimate gives the following.
		\[
		|K(h)|\lesssim (1+|h|)^{-1/3}.
		\]
		Therefore,
		\[
		\begin{aligned}
			B_\lambda
			&\lesssim
			\sum_{\substack{1\le R\le 2\lambda\\ R\ \mathrm{dyadic}}}
			R^{-1/3}\#\{\eta\in\mathcal E_\lambda:|\eta-\xi|\le 2R\}  \\
			&\lesssim_\varepsilon
			\lambda^\varepsilon
			\sum_{\substack{1\le R\le 2\lambda\\ R\ \mathrm{dyadic}}}
			R^{-1/3}(1+R)                                             \\
			&\lesssim_\varepsilon \lambda^{2/3+\varepsilon}.
		\end{aligned}
		\]
		Thus,
		\[
		\|e_\lambda\|_{L^2(\Sigma)}
		\lesssim_\varepsilon
		\lambda^{1/3+\varepsilon}
		\|e_\lambda\|_{L^2(\mathbb T^3)} .
		\]
		
		\noindent\textbf{(iv) Planar curved case.}
		Assume \(\gamma(I)\subset x_0+P\), where \(P\) is a two-dimensional linear
		plane, and assume that the curvature in \(P\) is nowhere zero in
		\(\operatorname{supp}\chi\).  Then
		\[
		|K(h)|\lesssim (1+|\pi_P h|)^{-1/2}.
		\]
		Hence, summing dyadically in \(U=1+|\pi_P(\eta-\xi)|\),
		\[
		\begin{aligned}
			B_\lambda
			&\lesssim
			\sum_{\substack{1\le U\le 2\lambda\\ U\ \mathrm{dyadic}}}
			U^{-1/2}N_\xi(2U;P)                                      \\
			&\lesssim_\varepsilon
			\lambda^\varepsilon
			\sum_{\substack{1\le U\le 2\lambda\\ U\ \mathrm{dyadic}}}
			U^{-1/2}\bigl(1+(\lambda U)^{1/2}\bigr)                    \\
			&\lesssim_\varepsilon \lambda^{1/2+\varepsilon}.
		\end{aligned}
		\]
		And Schur's test gives
		\[
		\|e_\lambda\|_{L^2(\Sigma)}
		\lesssim_\varepsilon
		\lambda^{1/4+\varepsilon}
		\|e_\lambda\|_{L^2(\mathbb T^3)} .
		\]
		
		Finally Case (iii) requires some further discussion.
		
		\noindent\textbf{(iii) Analytic curved case.}
		Assume now that \(\gamma\) is real-analytic and has nowhere vanishing geodesic curvature,
		\[
		|\gamma''(t)|\ge c_0>0.
		\]
		Let
		\[
		\tau(t)=\det(\gamma'(t),\gamma''(t),\gamma'''(t)).
		\]
		If \(\tau\equiv0\) on a subinterval, then the curve is planar there, and the
		planar estimate gives the stronger bound. Otherwise, the zeros of \(\tau\)
		are isolated and, hence, finite on the compact support after shrinking.  Away
		from these zeros we are in the nowhere vanishing torsion case.  It remains to show for a small neighborhood of one zero, say \(t=0\).
		
		Set
		\[
		P_0=\operatorname{span}\{\gamma'(0),\gamma''(0)\}.
		\]
		After a rigid motion, translation, and analytic reparametrization, absorbing
		the Jacobian into the amplitude, we may write
		\[
		\gamma(s)=(x(s),y(s),z(s))
		=
		\bigl(s+O(s^2),\,c_2s^2+O(s^3),\,c_ks^k+O(s^{k+1})\bigr),
		\]
		where
		\[
		c_2c_k\neq0,\qquad k\ge4.
		\]
		If no finite \(k\) occurs, then \(z\equiv0\) near \(0\), so the piece is
		planar and is already handled.
		
		Let
		\[
		K_0(h)=\int \chi_0(s)e^{2\pi i h\cdot\gamma(s)}\,ds
		\]
		with \(\chi_0\) supported in this neighborhood.  Write
		\[
		\rho=1+|h|,
		\qquad
		V=1+|\pi_{P_0}h|.
		\]
		We claim
		\begin{equation}\label{1}
			|K_0(h)|
			\lesssim
			\begin{cases}
				\rho^{-1/k}, & V\le \rho^{2/k},\\[2mm]
				\rho^{-\frac1{3(k-2)}}V^{-\frac{k-3}{3(k-2)}},
				& V>\rho^{2/k}.
			\end{cases}
		\end{equation}
		Indeed,
		\[
		\det(\gamma'(0),\gamma''(0),\gamma^{(k)}(0))\neq0,
		\]
		so, after shrinking the support,
		\[
		\max_{1\le j\le k}|h\cdot\gamma^{(j)}(s)|\gtrsim |h|.
		\]
		The finite-type van der Corput gives the first bound in \eqref{1}.
		
		Assume now \(V>\rho^{2/k}\).  The case \(V=O(1)\) is trivial, so assume
		\(V\approx |\pi_{P_0}h|\).  Write \(h=(q,m)\), where
		\(q\in P_0\) and \(m\in P_0^\perp\), and set \(p(s)=(x(s),y(s))\). The phase is 
		\[h\cdot \gamma(s)=q\cdot p(s)+mz(s).\] We split the integral at the scale
		\[
		\delta=\left(\frac{V}{\rho}\right)^{1/(k-2)}.
		\]
		We discuss in two cases for \(|s|\le c\delta\) and \(|s|\approx r\), \(r\ge c\delta\).
		
		\noindent \textbf{Case 1. } On \(|s|\le c\delta\), the projected curve \(p\) has nowhere vanishing curvature,
		so
		\[
		\max\{|q\cdot p'(s)|,\ |q\cdot p''(s)|\}\gtrsim |q|\approx V.
		\]
		Also,
		\[
		|mz'(s)|\lesssim \rho |s|^{k-1},
		\qquad
		|mz''(s)|\lesssim \rho |s|^{k-2}.
		\]
		Since $|s|\le c\delta$ and $\rho \delta^{k-2}=V$, we have
		\[\max\{|mz'(s)|,	|mz''(s)|\}\ls c^{k-2}V.\]
		Choosing \(c>0\) small gives
		\[
		\max\{|(h\cdot\gamma)'(s)|,\ |(h\cdot\gamma)''(s)|\}\gtrsim V
		\]
		on the inner region.  Hence the inner contribution is
		\[
		\lesssim V^{-1/2}.
		\]
		
		\noindent \textbf{Case 2.}	On a dyadic annulus \(|s|\approx r\), \(r\ge c\delta\), the normal form gives
		\[
		|\det(\gamma'(s),\gamma''(s),\gamma'''(s))|\approx r^{k-3}.
		\]
		Since the first three derivatives of \(\gamma\) are bounded, this implies
		\[
		\max_{1\le j\le3}|h\cdot\gamma^{(j)}(s)|
		\gtrsim \rho r^{k-3}.
		\]
		Thus the annular contribution is
		\[
		\lesssim (\rho r^{k-3})^{-1/3}.
		\]
		Summing over dyadic \(r\ge c\delta\) gives
		\[
		\sum_{r\ge c\delta}(\rho r^{k-3})^{-1/3}
		\lesssim
		(\rho\delta^{k-3})^{-1/3}.
		\]
		Moreover \(V^{-1/2}\le(\rho\delta^{k-3})^{-1/3}\) when
		\(V>\rho^{2/k}\).  Therefore
		\[
		|K_0(h)|
		\lesssim
		(\rho\delta^{k-3})^{-1/3}
		=
		\rho^{-\frac1{3(k-2)}}V^{-\frac{k-3}{3(k-2)}}.
		\]
		This proves the claim \eqref{1}.
		
		We now prove the Schur bound for \(K_0\).  Fix
		\(\xi\in\mathcal E_\lambda\), put \(h=\xi-\eta\), and decompose dyadically
		according to
		\[
		\rho\le 1+|h|<2\rho,
		\qquad
		1\le \rho\lesssim \lambda.
		\]
		For the subregion \(V\le \rho^{2/k}\), using \eqref{1} and the projection
		count,
		\[
		\begin{aligned}
			S_\rho^{(0)}
			&\lesssim
			\rho^{-1/k}N_\xi(C\rho^{2/k};P_0)                         \\
			&\lesssim_\varepsilon
			\lambda^\varepsilon
			\rho^{-1/k}\bigl(1+(\lambda\rho^{2/k})^{1/2}\bigr)          \\
			&\lesssim_\varepsilon \lambda^{1/2+\varepsilon}.
		\end{aligned}
		\]
		For the subregion \(V>\rho^{2/k}\), set
		\[
		\alpha=\frac1{3(k-2)},
		\qquad
		\beta=\frac{k-3}{3(k-2)}.
		\]
		Then \(\alpha+\beta=1/3\) and \(0<\beta<1/2\).  Dyadic summation in
		\(U\approx V\) gives
		\[
		\begin{aligned}
			S_\rho^{(1)}
			&\lesssim
			\sum_{\substack{U>\rho^{2/k}\\ U\ \mathrm{dyadic}\\ U\lesssim\rho}}
			\rho^{-\alpha}U^{-\beta}N_\xi(2U;P_0)                     \\
			&\lesssim_\varepsilon
			\lambda^\varepsilon\rho^{-\alpha}
			\sum_{\substack{U>\rho^{2/k}\\ U\ \mathrm{dyadic}\\ U\lesssim\rho}}
			U^{-\beta}\bigl(1+(\lambda U)^{1/2}\bigr)                  \\
			&\lesssim_\varepsilon
			\lambda^\varepsilon
			+
			\lambda^{1/2+\varepsilon}\rho^{-\alpha}\rho^{1/2-\beta}   \\
			&
			\lesssim_\varepsilon
			\lambda^{2/3+\varepsilon}.
		\end{aligned}
		\]
		Thus, for each dyadic \(\rho\),
		\[
		S_\rho^{(0)}+S_\rho^{(1)}
		\lesssim_\varepsilon \lambda^{2/3+\varepsilon}.
		\]
		After summing over \(O(\log\lambda)\) dyadic \(\rho\)'s and absorbing the
		logarithm into \(\lambda^\varepsilon\), we obtain
		\[
		\sup_{\xi\in\mathcal E_\lambda}
		\sum_{\eta\in\mathcal E_\lambda}|K_0(\xi-\eta)|
		\lesssim_\varepsilon
		\lambda^{2/3+\varepsilon}.
		\]
		Combining the finitely many pieces gives
		\[
		B_\lambda\lesssim_\varepsilon \lambda^{2/3+\varepsilon}.
		\]
		Schur's test therefore yields
		\[
		\|e_\lambda\|_{L^2(\Sigma)}
		\lesssim_\varepsilon
		B_\lambda^{1/2}\|e_\lambda\|_{L^2(\mathbb T^3)}
		\lesssim_\varepsilon
		\lambda^{1/3+\varepsilon}
		\|e_\lambda\|_{L^2(\mathbb T^3)}.
		\]
	\end{proof}
	
	\subsection{Discrete Restriction Conjecture}
	We briefly discuss the relation between the $L^2$ restriction bounds and global $L^p$ bounds for eigenfunctions, such as the Discrete Restriction Conjecture. This provides a different perspective for understanding the bounds in \eqref{T3} and \eqref{T4}. See Remark \ref{dcrem}.
	
	\begin{proposition}
		Let \(d\ge 2\), and let \(\Sigma\subset \T^d\) be a fixed smooth compact
		submanifold of codimension $m$.
		Assume that for some \(2\le p\le \infty\) one has the global estimate
		\begin{equation}\label{Lp-assumption}
			\|e_\lambda\|_{L^p(\T^d)}
			\le A_p(\lambda)
			\|e_\lambda\|_{L^2(\T^d)} .
		\end{equation}
		Then
		\begin{equation}\label{general-restriction-bound}
			\|e_\lambda\|_{L^2(\Sigma)}
			\lesssim
			\lambda^{\frac{m}p}
			A_p(\lambda)
			\|e_\lambda\|_{L^2(\T^d)}.
		\end{equation}
	\end{proposition}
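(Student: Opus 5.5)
The idea is to localize to a tubular neighborhood of \(\Sigma\) of width \(\lambda^{-1}\) and compare the \(L^2(\Sigma)\) norm with an \(L^2\) average over that tube. Two tools do the work: Hölder's inequality on the tube, whose volume is \(\approx\lambda^{-m}\), and a local Bernstein/Sobolev-type inequality for functions with frequencies of size \(\lambda\).

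\textbf{Step 1 (reduction to the tube).} We may assume \(\|e_\lambda\|_{L^2(\T^d)}=1\) and work on one coordinate patch of \(\Sigma\). Let \(N_\delta\) be the \(\delta\)-neighborhood of \(\Sigma\), with \(\delta=c\lambda^{-1}\). The claim to establish is
\[
\|e_\lambda\|_{L^2(\Sigma)}^2\lesssim \lambda^{m}\int_{N_{C/\lambda}}|e_\lambda|^2\,dx .
\]
This is the standard estimate for spectrally localized functions. The cleanest route is to write \(e_\lambda=\rho(\lambda^{-1}\sqrt{-\Delta})e_\lambda\) with \(\rho\in\mathcal S(\R)\) and \(\rho(2\pi)=1\). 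The kernel of \(\rho(\lambda^{-1}\sqrt{-\Delta})\) on \(\T^d\) is bounded by \(\lambda^d(1+\lambda|x-y|)^{-N}\), as seen by periodizing the Euclidean kernel.

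We then write \(e_\lambda(x)\) for \(x\in\Sigma\) as an integral of this kernel against \(e_\lambda\). Cauchy--Schwarz with the weight \(\lambda^{d}(1+\lambda|x-y|)^{-N}\), followed by integration over \(\Sigma\), gives
\[
\int_\Sigma|e_\lambda|^2\,d\sigma\lesssim\int_{\T^d}|e_\lambda(y)|^2\,w(y)\,dy,
\qquad
w(y)=\int_\Sigma\lambda^d(1+\lambda|x-y|)^{-N}\,d\sigma(x).
\]
One checks that \(w(y)\lesssim\lambda^{m}(1+\lambda\,\mathrm{dist}(y,\Sigma))^{-N+k}\), since \(\Sigma\) has dimension \(k=d-m\).

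\textbf{Step 2 (Hölder with \(A_p\)).} Decompose \(\T^d\) into dyadic shells \(\{\mathrm{dist}(y,\Sigma)\approx 2^j/\lambda\}\), together with the inner tube \(j=0\). Each shell has volume \(\lesssim(2^j/\lambda)^m\), up to the \(O(1)\) size of \(\Sigma\), capped by \(1\). Hölder with exponents \(p/2\) and \((p/2)'\) gives
\[
\int_{\text{shell}_j}|e_\lambda|^2
\le \|e_\lambda\|_{L^p}^2\,\bigl|\text{shell}_j\bigr|^{1-2/p}
\lesssim A_p(\lambda)^2\,(2^j\lambda^{-1})^{m(1-2/p)} .
\]
Multiplying by the weight \(\lambda^m 2^{-j(N-k)}\) and summing in \(j\) (with \(N\) large) yields
\[
\|e_\lambda\|_{L^2(\Sigma)}^2\lesssim \lambda^{m}\lambda^{-m(1-2/p)}A_p(\lambda)^2=\lambda^{2m/p}A_p(\lambda)^2,
\]
which is \eqref{general-restriction-bound}. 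The endpoint \(p=\infty\) reads \(|e_\lambda|\le A_\infty\) on \(\Sigma\) and is consistent with this.

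\textbf{Main obstacle.} The only delicate point is Step 1 on the torus, namely the uniform kernel bound for \(\rho(\lambda^{-1}\sqrt{-\Delta})\) and the tube-volume bound for \(w\). To avoid functional calculus, I would first try taking \(\rho\) with \(\widehat{\rho}\) compactly supported and using Poisson summation. A concrete choice is the multiplier \(\phi(\xi/\lambda)\), where \(\phi\in C_0^\infty\) equals \(1\) near \(|\xi|=1\). This reproduces \(e_\lambda\) exactly, and its kernel is the periodization of \(\lambda^d\check\phi(\lambda\cdot)\), which decays rapidly at scale \(1/\lambda\) uniformly in \(\lambda\ge1\). With that choice, everything else is bookkeeping.
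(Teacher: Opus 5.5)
Your proof is correct. Its overall structure is the paper's: bound $\|e_\lambda\|_{L^2(\Sigma)}^2$ by $\lambda^m$ times the $L^2$ mass of $e_\lambda$ near the $\lambda^{-1}$-tube of $\Sigma$, then apply Hölder against the tube volume $\approx\lambda^{-m}$. Where you differ is the tube lemma.

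The paper rescales at wavelength scale, $u(z)=e_\lambda(x+z/\lambda)$, notes that $-\Delta_z u=(2\pi)^2u$, and applies the interior elliptic estimate $|u(0)|^2\lesssim\int_{B(0,1)}|u|^2$. Integrating over $\Sigma$ and using $\sigma(\Sigma\cap B(y,\lambda^{-1}))\lesssim\lambda^{-k}$ gives $\int_\Sigma|e_\lambda|^2\,d\sigma\lesssim\lambda^m\int_{T_{\lambda^{-1}}(\Sigma)}|e_\lambda|^2$ with no tails. A single application of Hölder then finishes.

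You instead reproduce $e_\lambda$ with the multiplier $\phi(\xi/\lambda)$, whose periodized kernel is only rapidly decreasing at scale $\lambda^{-1}$. This yields the weight $w(y)\lesssim\lambda^m(1+\lambda\operatorname{dist}(y,\Sigma))^{-(N-k)}$. You pay for the tails with the dyadic shells and the geometric series $\sum_j 2^{-j(N-k-m(1-2/p))}$, which converges once $N>d$; that bookkeeping is right.

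What your route buys is generality. It uses only that the Fourier coefficients of $e_\lambda$ are supported where $\phi(\cdot/\lambda)=1$, not the exact eigenvalue equation. So it applies verbatim to any function with frequencies in $\{|\xi|\approx\lambda\}$ that satisfies the $L^p$ hypothesis, e.g.\ spectral clusters. The paper's route is shorter and gives the sharply truncated tube directly.

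One cosmetic inconsistency: the inequality you announce at the start of Step 1, with the sharp tube $N_{C/\lambda}$, is not what a Schwartz-tailed kernel proves. That version would need a compactly supported kernel, e.g.\ the paper's elliptic estimate, or finite propagation speed with $\widehat\rho$ compactly supported. The latter is a different choice from your $\phi\in C_0^\infty$, so the two options you mention should not be conflated. Since Step 2 only uses the weighted bound, nothing breaks.
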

	\begin{proof}
		Let
		\[
		T_{\lambda^{-1}}(\Sigma)
		=
		\{x\in\T^d:\dist(x,\Sigma)\le C\lambda^{-1}\}
		\]
		be a fixed constant multiple of the wavelength-scale tube around \(\Sigma\).
		The first step is the local tube estimate
		\begin{equation}\label{tube-submean-d}
			\int_\Sigma |e_\lambda|^2\,d\sigma
			\lesssim
			\lambda^{m}
			\int_{T_{\lambda^{-1}}(\Sigma)}
			|e_\lambda(x)|^2\,dx .
		\end{equation}
		
		We prove \eqref{tube-submean-d}.  Fix \(x\in\T^d\), and rescale around \(x\)
		at wavelength scale:
		\[
		u(z)=e_\lambda\left(x+\frac{z}{\lambda}\right).
		\]
		Then \(u\) satisfies
		\[
		-\Delta_z u=(2\pi)^2u
		\]
		on a fixed Euclidean ball.  By the standard interior elliptic estimate,
		\[
		|u(0)|^2
		\lesssim
		\int_{B(0,1)}|u(z)|^2\,dz.
		\]
		Returning to \(e_\lambda\), this gives
		\begin{equation}\label{pointwise-submean-d}
			|e_\lambda(x)|^2
			\lesssim
			\lambda^d
			\int_{B(x,\lambda^{-1})}
			|e_\lambda(y)|^2\,dy .
		\end{equation}
		The factor \(\lambda^d\) comes from the change of variables
		\(y=x+z/\lambda\).
		
		Now integrate \eqref{pointwise-submean-d} over \(x\in\Sigma\):
		\[
		\begin{aligned}
			\int_\Sigma |e_\lambda(x)|^2\,d\sigma(x)
			&\lesssim
			\lambda^d
			\int_\Sigma
			\int_{B(x,\lambda^{-1})}
			|e_\lambda(y)|^2\,dy\,d\sigma(x)     \\
			&=
			\lambda^d
			\int_{\T^d}
			|e_\lambda(y)|^2
			\sigma\{x\in\Sigma:\dist(x,y)\le \lambda^{-1}\}
			\,dy .
		\end{aligned}
		\]
		Let $k=d-m$. Since \(\Sigma\) is a fixed smooth \(k\)-dimensional submanifold, a ball of
		radius \(\lambda^{-1}\) intersects \(\Sigma\) in \(k\)-dimensional measure
		\[
		\lesssim \lambda^{-k}.
		\]
		Moreover, the inner set is empty unless
		\(y\in T_{\lambda^{-1}}(\Sigma)\).  Therefore
		\[
		\int_\Sigma |e_\lambda|^2\,d\sigma
		\lesssim
		\lambda^d\lambda^{-k}
		\int_{T_{\lambda^{-1}}(\Sigma)}
		|e_\lambda(y)|^2\,dy,
		\]
		which proves \eqref{tube-submean-d}.
		
		Next, since \(\Sigma\) has codimension $m$ inside \(\T^d\), its
		\(\lambda^{-1}\)-tube has volume
		\begin{equation}\label{tube-volume-d}
			|T_{\lambda^{-1}}(\Sigma)|
			\lesssim
			\lambda^{-m} .
		\end{equation}
		By H\"older's inequality and \eqref{Lp-assumption},
		\[
		\begin{aligned}
			\int_{T_{\lambda^{-1}}(\Sigma)}
			|e_\lambda(x)|^2\,dx
			&\le
			|T_{\lambda^{-1}}(\Sigma)|^{1-2/p}
			\|e_\lambda\|_{L^p(\T^d)}^2\\
			&\lesssim
			\lambda^{-m(1-2/p)}
			\|e_\lambda\|_{L^p(\T^d)}^2\\
			&\ls 	\lambda^{-m(1-2/p)}
			A_p(\lambda)^2
			\|e_\lambda\|_{L^2(\T^d)}^2 .
		\end{aligned}
		\]
		Combining this with \eqref{tube-submean-d} gives
		\[
		\begin{aligned}
			\int_\Sigma |e_\lambda|^2\,d\sigma
			&\lesssim
			\lambda^{m}
			\lambda^{-m(1-2/p)}
			A_p(\lambda)^2
			\|e_\lambda\|_{L^2(\T^d)}^2        \\
			&=
			\lambda^{2m/p}
			A_p(\lambda)^2
			\|e_\lambda\|_{L^2(\T^d)}^2 .
		\end{aligned}
		\]
		This yields
		\eqref{general-restriction-bound}.
	\end{proof}
	
	\begin{corollary}
		Assume \(d\ge3\), and suppose that the endpoint discrete restriction
		estimate
		\begin{equation}\label{endpoint-discrete}
			\|e_\lambda\|_{L^{p_c}(\T^d)}
			\lesssim_\eps
			\lambda^\eps
			\|e_\lambda\|_{L^2(\T^d)},
			\qquad
			p_c=\frac{2d}{d-2},
		\end{equation}
		holds.  Then for every smooth compact submanifold
		\(\Sigma\subset\T^d\) of codimension $m$,
		\begin{equation}\label{endpoint-corollary}
			\|e_\lambda\|_{L^2(\Sigma)}
			\lesssim_\eps
			\lambda^{\frac m2-\frac md+\eps}
			\|e_\lambda\|_{L^2(\T^d)} .
		\end{equation}
	\end{corollary}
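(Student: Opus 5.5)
This is a direct specialization of the preceding Proposition, so the plan is simply to apply it with $p=p_c=\frac{2d}{d-2}$. Since $d\ge3$, we have $2<p_c<\infty$. Thus $p_c$ lies in the admissible range $2\le p\le\infty$ of that Proposition. The hypothesis \eqref{endpoint-discrete} says exactly that \eqref{Lp-assumption} holds with
\[
A_{p_c}(\lambda)\lesssim_\eps \lambda^{\eps}.
\]

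Substituting into \eqref{general-restriction-bound} gives
\[
\|e_\lambda\|_{L^2(\Sigma)}\lesssim_\eps \lambda^{\frac{m}{p_c}+\eps}\|e_\lambda\|_{L^2(\T^d)}.
\]
It remains to compute the exponent:
\[
\frac{m}{p_c}=\frac{m(d-2)}{2d}=\frac m2-\frac md.
\]
This yields \eqref{endpoint-corollary}.

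There is no real obstacle. The only points to check are the following.
\begin{itemize}
\item The implicit constant in the Proposition depends only on $\Sigma$ and $d$, and not on $p$ or $\lambda$. Hence the $\lambda^\eps$ loss from the hypothesis passes through unchanged.
\item The tube-volume and interior-elliptic steps in the Proposition's proof hold for any fixed smooth compact submanifold of codimension $m$. No curvature or geodesic assumption on $\Sigma$ is needed.
\end{itemize}
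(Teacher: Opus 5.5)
Your proposal is correct and is exactly how the paper gets the corollary: it is an immediate specialization of the preceding Proposition. You take $p=p_c$ and $A_{p_c}(\lambda)\lesssim_\eps\lambda^\eps$, and then compute $\frac{m}{p_c}=\frac{m(d-2)}{2d}=\frac m2-\frac md$.
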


	\begin{remark}\label{dcrem}
		For every smooth curve \(\Sigma\subset\T^d\), we have $m=d-1$ and then the endpoint  estimate \eqref{endpoint-discrete}
		would imply
		\begin{equation}\label{dcb}
			\|e_\lambda\|_{L^2(\Sigma)}
			\lesssim_{\eps}
			\lambda^{\frac{d-3}2+\frac1d+\eps}
			\|e_\lambda\|_{L^2(\T^d)}.
		\end{equation}
		The power is exactly $\frac13$ when $d=3$ and $\frac34$ when $d=4$, which agrees with the power that we have obtained in \eqref{T3} and \eqref{T4}. Since the conjecture \eqref{endpoint-discrete} is still open (see Bourgain-Demeter \cite[Conjecture 2.6]{BD} and references therein), it would be interesting to prove \eqref{T3} for any smooth curve \(\Sigma\subset\T^3\), without assuming \eqref{endpoint-discrete}.
	\end{remark}

	\bibliographystyle{plain}
	
\end{document}